\newcommand{\N}{\mathbb{N}}
\newcommand{\R}{\mathbb{R}}
\newcommand{\Uk}{\overline{|A^*q_k|}}
\newcommand{\Gk}{\underline{\norm{\nabla A^*q_k}_2}}
\newcommand{\calB}{\mathcal{B}}
\newcommand{\calC}{\mathcal{C}}
\newcommand{\calD}{\mathcal{D}}
\newcommand{\calM}{\mathcal{M}}
\newcommand{\calP}{\mathcal{P}}
\newcommand{\calV}{\mathcal{V}}
\newcommand{\eqdef}{\ensuremath{\stackrel{\mbox{\upshape\tiny def.}}{=}}}
\newcommand{\abs}[1]{\left\vert #1 \right\vert}
\newcommand{\norm}[1]{\Vert #1 \Vert}
\newcommand{\set}[1]{\left\lbrace #1\right\rbrace}
\newcommand{\sse}{\subseteq}
\newcommand{\sprod}[1]{\left\langle #1 \right\rangle}
\newcommand{\geqsim}{\gtrsim}
\newcommand{\leqsim}{\lesssim}
\DeclareMathOperator{\vertx}{vert}
\DeclareMathOperator{\dist}{dist}
\DeclareMathOperator{\distH}{dist_{\mathcal{H}}}
\DeclareMathOperator{\sgn}{sign}
\DeclareMathOperator{\supp}{supp}
\DeclareMathOperator{\Id}{Id}
\newcommand{\argmax}{\mathop{\mathrm{argmax}}}
\DeclareMathOperator{\dom}{dom}
\DeclareMathOperator{\interior}{int}
\DeclareMathOperator{\vol}{vol}
\newcommand{\vertices}{\mathrm{vert}}
\DeclareMathOperator{\diam}{diam}
\newtheorem{lemma}{Lemma}
\newtheorem{proposition}[lemma]{Proposition}
\newtheorem{theorem}[lemma]{Theorem}
\newtheorem{assumption}{Assumption}
\newtheorem{definition}{Definition}
\newtheorem{remark}{Remark}
\theoremstyle{definition}
\let\bbordermatrix\bordermatrix
\patchcmd{\bbordermatrix}{8.75}{4.75}{}{}
\patchcmd{\bbordermatrix}{\left(}{\left[}{}{}
\patchcmd{\bbordermatrix}{\right)}{\right]}{}{}
\title{Grid is Good: \\ Adaptive Refinement Algorithms for Off-the-Grid Total Variation Minimization}
\author{Axel Flinth, Fr\'ed\'eric de Gournay and Pierre Weiss}
\begin{document}

\maketitle


\begin{abstract}
We propose an adaptive refinement algorithm to solve total variation regularized measure optimization problems. 
The method iteratively constructs dyadic partitions of the unit cube based on i) the resolution of discretized dual problems and ii) on the detection of cells containing points that violate the dual constraints.
The detection is based on upper-bounds on the dual certificate, in the spirit of branch-and-bound methods. 
The interest of this approach is that it avoids the use of heuristic approaches to find the maximizers of dual certificates. 
We prove the convergence of this approach under mild hypotheses and a linear convergence rate under additional non-degeneracy assumptions.
These results are confirmed by simple numerical experiments. 
\footnote{The title of this article is a reference to the seminal paper of J. Tropp on sparse approximation   \cite{tropp2004greed}, which was itself a reference to the movie Wall Street of Oliver Stone.}
\end{abstract}


\section{Introduction}

We develop and analyze an algorithm to solve the following problem:
\begin{equation}
 \inf_{\mu \in \calM(\Omega)} J(\mu) \eqdef \norm{\mu}_\calM + f(A\mu) \tag{$\calP(\Omega)$} \label{eq:primal},
\end{equation}
where $\Omega \subset \R^D$ is a compact domain, $\calM(\Omega)$ is the set of Radon measures on $\Omega$, $A:\calM(\Omega)\to \R^M$ is a continuous operator, $\norm{\cdot}_\calM$ is the total variation and $f:\R^M\to\R\cup\{+\infty\}$ is a convex, proper function. A key property of this problem is that it admits \emph{sparse} solutions (see e.g. \cite{boyer2018representer}) of the form 
\begin{equation*}
    \mu^\star = \sum_{s=1}^S \alpha_s^\star \delta_{x_s^\star},
\end{equation*}
where $\alpha_s^\star\in \R$ are weights, $x_s^\star \in \Omega$ are locations and where the number of sources $S$ satisfies $S\leq M$. This feature shows that a finite dimensional problem is somewhat hidden in \eqref{eq:primal}. It makes it possible to design specific and efficient numerical procedures, for the challenging measure optimization problem \ref{eq:primal}.

\paragraph{Applications}

This problem and its variants appear in various fields. In \emph{inverse problems}, it is used heavily for sparse source localization and super-resolution \cite{de2012exact,bredies2013inverse,tang2013compressed,candes2014towards,duval2015exact}. It is also used in \emph{optimal control} with sparse controls \cite{clason2012measure,kunisch2016optimal}.
In \emph{approximation theory}, a ``generalized'' version of this problem was revisited recently in \cite{unser2017splines}. Given a surjective Fredholm operator $L:\calB(\Omega)\to \calM(\Omega)$, where $\calB(\Omega)$ is a suitably defined Banach space, consider the following problem:
\begin{equation}\label{eq:GTV}
 \inf_{u \in \calB(\Omega)} \norm{Lu}_{\calM} + f(Au).
\end{equation}
The solutions of this problem are (generalized) splines with free knots \cite{unser2017splines}. Following \cite{flinth2017exact} and letting $L^+$ denote a pseudo-inverse of $L$, this problem can be rephrased as
\begin{equation}\label{eq:GTV2}
 \inf_{\substack{\mu \in \calM(\Omega) \\ u_K\in \mathrm{ker}(L)}} \norm{\mu}_{\calM} + f(A (L^+\mu+u_K)),
\end{equation}
which is an instance of \eqref{eq:primal}.

\paragraph{Exchange algorithms}

In this work, we will introduce and study a variant of an \emph{exchange algorithm}. 
To explain its principle, let us introduce the dual problem \eqref{eq:dual} to \eqref{eq:primal}:
\begin{equation}
    \sup_{\substack{q \in \R^M \\ \norm{A^*q}_{L^\infty(\Omega)} \leq 1}} -f^*(q), \tag{$\calD(\Omega)$} \label{eq:dual}
\end{equation}
where $\norm{A^*q}_{L^\infty(\Omega)}\eqdef \sup_{x\in \Omega} |A^*q|(x)$.
This dual formulation will play a pivotal role in our analysis. It involves the optimization of a finite dimensional variable $q\in \R^M$, subject to an infinite number of linear constraints $\{|A^*q(x)|\leq 1, \forall x\in \Omega \}$. It is therefore called a \emph{semi-infinite} program \cite{hettich1993semi,rettich98,hettich1983review}.
The first algorithm proposed to tackle it is usually attributed to Remez and his exchange algorithm \cite{remez1934determination}. It dates back to the 1930's and was adapted to a specific problem of the form \ref{eq:primal}.
The general idea is to define a sequence of discretization sets $(\calV_k)_{k\in \N}$, where $\calV_k$ is a finite set of points (vertices) in $\Omega$. We can then define the discretized primal and dual problems as follows
\begin{align*}
    \inf_{\mu \in \calM(\calV_k)} \norm{\mu}_\calM + f(A\mu) \tag{$\calP(\calV_k)$} \label{eq:discprimal}
\end{align*}
\begin{align*}
    \sup_{\substack{ q \in \R^M \\  \|A^*q\|_{L^\infty(\calV_k)} \leq 1}} -f^*(q).  \tag{$\calD(\calV_k)$} \label{eq:discdual}
\end{align*}

Both problems are finite dimensional convex problem, and can be solved with off-the-shelf solvers. 
The simplest approach is to define $\calV_k$ as a Euclidean grid with edge-length $2^{-k}$ \cite{tang2013sparse,debarre2022tv}. Proving the convergence of this approach is rather straightforward. A problem however is the numerical complexity which explodes rapidly while $k$ increases. 

A lighter adaptive method consists in using the dual variable $q_k$ to construct $\calV_{k+1}$. 
It satisfies $|A^*q_k|(x)\leq 1$ for $x\in \calV_k$ by construction. 
However, there may exist locations $x\in \Omega \setminus \calV_k$ with $|A^*q_k|(x)>1$. Such points are candidates to be added to $\calV_{k+1}$. Perhaps the most popular approach in this class is the Frank-Wolfe \cite{frank1956algorithm} approach. It consists in adding only the global maximizer of $|A^*q_k|$ at each step. It is described precisely in Algorithm \ref{alg:frank_wolfe}.
\begin{algorithm}[h!] \caption{The Frank-Wolfe Algorithm\label{alg:frank_wolfe}}
\begin{algorithmic}[1]
    \STATE \textbf{Input:} \\
    $\quad \bullet$ Initial discretization set $\calV_0$, set $k=0$
    \STATE \textbf{WHILE} a stopping criterion is not satisfied
    \STATE $\qquad 1)$ Determine a solution $q_k$ of $\calD(\calV_k)$
    \STATE $\qquad 2)$ Determine $x_k^\star \eqdef \argmax_{x\in \Omega} |A^*q_k|(x)$.
    \STATE $\qquad 3)$ Set $\calV_{k+1} = \calV_k \cup \{x_k^\star\}$.
    \STATE \textbf{Output:}  \\ 
        $\quad \bullet$ The dual solution $q_k$ of \eqref{eq:discdual}. \\
\end{algorithmic}
\end{algorithm}
It was revived in signal processing thanks to Bredies et al in \cite{bredies2013inverse}. Its connection with the exchange algorithms was recalled in \cite{eftekhari2018bridge}. A linear convergence theory was developed independently by Walter and Pieper in \cite{pieper2019linear} and by the authors in \cite{FGW2019ExchangeI}.
Despite nice theoretical properties, this approach suffers from one major issue, which is the main motivation for the present paper:
\begin{center}
    \emph{How can we find the maximizer $x_k^\star \eqdef \argmax_{x\in \Omega} |A^*q_k|(x)$?}
\end{center}
This problem has no reason to be simple: it is nonconvex and depends highly on the properties of the linear forms $a_m$. As far as we could judge, it is usually tackled with heuristics, which are often swept under the carpet. For instance, multiple gradient or Newton ascents are launched in parallel starting from a set of points covering $\Omega$ sufficiently finely. At the end of the process, the point with the largest value is then kept as an approximation of $x_k^\star$. Our experience is that tuning the hyper-parameters in this quest for the global minimizer is time consuming and can represent a real headache for the optimizer. The main objective of this paper is to tackle this issue, by looking for approximate, easily detectable maximizers only. 

\paragraph{Alternative solvers}
Finally, let us mention that alternative algorithms with a different flavor are available. 
A possibility is to use the Lasserre hierarchies, which are designed to solve near generic measure optimization problems \cite{lasserre2015introduction,de2017exact}. They however scale poorly for large $M$ and will not be considered further in this work. 
Another possibility is to use continuous optimization procedures. The idea is to parameterize the measure $\mu$ by the locations and weights of its masses. Given a number $N\geq M$ of particles, a set of locations $X=(x_1,\hdots, x_N)$ in $\Omega^N$ and a weight vector $\alpha\in \R^N$, we can define the mapping
\begin{equation*}
    \mu(X,\alpha) \eqdef \sum_{n=1}^N \alpha_n \delta_{x_n}. 
\end{equation*}
Plugging it in the primal problem \ref{eq:primal}, we obtain the following non-convex, finite dimensional problem
\begin{equation}\label{eq:parameterized_problem}
    \inf_{\substack{X\in \Omega^N \\ \alpha \in \R^N}} \|\alpha\|_1 + f(A\mu(X,\alpha)).
\end{equation}
It can be solved with continuous optimization routines such as proximal gradient descents. 
The difficulty with this type of approach is the initialization: how to set the number $N$ of particles, their locations and weights? A possibility is to discretize the domain $\Omega$ finely. A nice global convergence theory has then been developed in \cite{chizat2018global,chizat2022sparse}. The price to pay is a large number of variables $N\gg M$. This is somewhat unsatisfactory since we know in advance that a solution supported on at most $M$ points exists. A possibility to avoid this pitfall consists in using an exchange algorithm to find an approximate solution to \ref{eq:primal}, and then fine-tune it with a continuous optimization routine on \eqref{eq:parameterized_problem}. It now possesses a rich convergence theory \cite{traonmilin2020basins,FGW2019ExchangeI}.
It can be used in conjunction with the algorithm proposed in this work. A variant of this approach is to launch the continuous optimization technique after every step in the Frank-Wolfe algorithm, and not only use it to fine-tune in the end. This approach was introduced and coined the sliding Frank-Wolfe algorithm in \cite{denoyelle2019sliding}.


\section{Preliminaries}

We will work on the domain $\Omega = [0,1]^D$ for simplicity. This is not a real restriction, since we can fit any compact domain of $\R^D$ to $\Omega= [0,1]^d$ using a non-degenerate affine transformation. Using this map, we may push forward measurement functions and measures, in a bijective fashion. We let $\calM(\Omega)$ denote the space of Radon measures of bounded total variation on $\Omega$ and $\calC_0(\Omega)$ the space of continuous functions on $\Omega$ vanishing on the boundary. Note that if we equip $\calM(\Omega)$ with the total variation norm $\norm{\, \cdot \,}_\calM$ and $\calC_0(\Omega)$ with the supremum norm, $\calM(\Omega)$ can be identified with the dual of $\calC_0(\Omega)$. For a subset $\omega$ of $\R^D$, we let $\vol(\omega)$ denote its volume (or Lebesgue measure).
The notation $\llbracket 1,N\rrbracket$ indicates the set of integers from $1$ to $N$. 
The relation $f\lesssim g$ indicates that $f$ is dominated by $g$ up to positive multiplicative constant.
The relation $f\asymp g$ indicates that $f$ and $g$ are equivalent, i.e. that there exists two constants $0<c_1\leq c_2$ such that $c_1 g \leq f \leq c_2 g$.


\subsection{Cells and cell partitions}
The proposed algorithms rely on the use of $2^D$-trees. 
We iteratively partition hypercubes in $2^D$ equal parts.
For instance, we will use binary trees in 1D, quadtrees in 2D, and octrees in 3D. 
Let us define some objects.

\begin{definition}[Cells, vertices, edge-length]
We call a subset $\omega \sse [0,1]$ a \emph{dyadic cell} (or simply a cell) if it is of the form $$\omega = x + 2^{-J} \cdot [0,1]^d,$$ where $J$ is a non-negative integer and  $x \in (2^{-J}\cdot\set{0, 1, \dots, 2^J-1})^d$. 

For a cell $\omega$, we let $\vertx(\omega)$ denote its vertices and $|\omega|$ denote its edge-length.
\end{definition}

\begin{definition}[Cell partition]
A cell partition $\Omega_k$ is a collection of cells such that $\Omega = \cup_{\omega \in \Omega_k} \omega$ and $\vol(\omega'\cap \omega)=0$ for all $\omega,\omega'\in \Omega_k$ with $\omega\neq \omega'$. That is to say, two cells in $\Omega_k$ can only have faces in common.
\end{definition}

\subsection{Measurement operator}

Throughout the paper, we will work under the following assumption.
\begin{assumption}[Continuous operator\label{ass:A}]
    The operator $A:\calM(\Omega)\to \R^M$ is weak-$*$-continuous. Equivalently, the \emph{measurement functionals} $a_m^*$ defined by $\sprod{a_m^*, \mu} = (A(\mu))_m$ are given for all $\mu\in \calM(\Omega)$ by 
    \begin{align*}
        \sprod{a_m^*, \mu} = \int_{\Omega} a_m d\mu,
    \end{align*}
    for functions $a_m \in \calC_0(\Omega)$. 
\end{assumption}
Given $q\in \R^M$, $A^*q=\sum_{m=1}^M q_m a_m$ is a continuous function. Assuming than $a_m\in \calC^r(\Omega)$, we let $(A^*q)'$, $(A^*q)''$, $(A^*q)^{(r)}$ denote its derivative, its Hessian and it $r$-th tensor derivative. We define the following constants
\begin{equation}
\label{def:kappa_r}
     \kappa_r \eqdef \sup_{\|q\|_2\leq 1} \sup_{x\in \Omega} \norm{(A^*q)^{(r)}(x)}_{2\to 2},
\end{equation}
where $r$ indicates the derivative's order and where $\norm{\cdot}_{2\to 2}$ is the spectral norm. 

\subsection{Set distances}

The ``distance'' between two sets $X_1$ and $X_2$ in $\R^D$ is defined by
\begin{equation*}
    \dist(X_1 , X_2) \eqdef \inf_{x_1\in X_1, x_2\in X_2} \|x_1-x_2\|_2. 
\end{equation*}
Notice that $\dist$ is not a proper distance. In particular, it does not satisfy the triangle inequality.
For a point $x\in \R^D$ and a set $X\subset \R^D$, we will use the shorthand notation
\begin{equation*}
    \dist(x,X) \eqdef \dist(\{x\},X).
\end{equation*}
The \emph{Hausdorff distance} between $X_1$ and $X_2$ is defined by
\begin{equation*}
    \distH(X_1 | X_2) \eqdef \sup_{x_2\in X_2}\inf_{x_1\in X_1} \|x_1-x_2\|_2. 
\end{equation*}
Notice that this distance is asymmetric: in general $\distH(X_1 | X_2) \neq \distH(X_2 | X_1)$.
The following inequality will play an important role in the analysis.
\begin{proposition}[Triangle inequality for set distances\label{prop:triangle_set}]
    For any triple of sets $X_1,X_2,X_3$ in $R^D$ we have 
    \begin{equation}\label{eq:set_triangle_inequality}
        \dist(X_1,X_2) \leq \distH(X_1|X_3) + \dist(X_3,X_2).
    \end{equation}
\end{proposition}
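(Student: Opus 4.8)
The plan is to run a standard $\varepsilon$-approximation argument, reducing everything to the ordinary triangle inequality in $\R^D$ applied to a carefully chosen triple of points. The key is to select points realizing (up to $\varepsilon$) the two quantities on the right-hand side, in such a way that they share a common intermediate point $x_3 \in X_3$; then the pointwise triangle inequality glues the estimates together, and the outer $\inf$ defining $\dist(X_1,X_2)$ only decreases the bound.

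Concretely, fix $\varepsilon > 0$. First I would use the definition of $\dist(X_3,X_2)$ as an infimum over pairs to produce points $x_3 \in X_3$ and $x_2 \in X_2$ with $\|x_3 - x_2\|_2 \leq \dist(X_3,X_2) + \varepsilon$. The crucial observation is that the asymmetric Hausdorff distance $\distH(X_1|X_3) = \sup_{x_3' \in X_3} \inf_{x_1 \in X_1}\|x_1 - x_3'\|_2$ controls the distance from $X_1$ to \emph{every} point of $X_3$, and in particular to the point $x_3$ we just selected. Hence $\inf_{x_1 \in X_1}\|x_1 - x_3\|_2 \leq \distH(X_1|X_3)$, and by definition of the infimum there exists $x_1 \in X_1$ with $\|x_1 - x_3\|_2 \leq \distH(X_1|X_3) + \varepsilon$.

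With the three points $x_1,x_2,x_3$ in hand, the Euclidean triangle inequality gives
\begin{equation*}
    \|x_1 - x_2\|_2 \leq \|x_1 - x_3\|_2 + \|x_3 - x_2\|_2 \leq \distH(X_1|X_3) + \dist(X_3,X_2) + 2\varepsilon.
\end{equation*}
Since $\dist(X_1,X_2) = \inf_{x_1' \in X_1, x_2' \in X_2}\|x_1' - x_2'\|_2 \leq \|x_1 - x_2\|_2$, we conclude $\dist(X_1,X_2) \leq \distH(X_1|X_3) + \dist(X_3,X_2) + 2\varepsilon$, and letting $\varepsilon \to 0$ yields the claim.

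There is no genuine obstacle here; the only point requiring care is orienting the asymmetric Hausdorff term correctly. It is precisely because $\distH(X_1|X_3)$ takes a supremum over $x_3 \in X_3$ that we are free to feed it the particular $x_3$ coming from the near-optimal pair for $\dist(X_3,X_2)$; the reversed quantity $\distH(X_3|X_1)$ would not supply a bound at that specific point. Keeping the intermediate set $X_3$ in the role of the ``source'' of the Hausdorff distance is therefore the one place where the argument could go wrong, and it is what the stated inequality has been arranged to respect.
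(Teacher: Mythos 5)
Your proof is correct and follows essentially the same route as the paper's: both reduce to the Euclidean triangle inequality on a triple $x_1,x_2,x_3$ and then optimize, with the key step in each case being that $\distH(X_1|X_3)$, as a supremum over $X_3$, bounds $\inf_{x_1\in X_1}\|x_1-x_3\|_2$ at the particular $x_3$ of interest. The only cosmetic difference is that you pass through $\varepsilon$-near-optimizers while the paper manipulates the infima and suprema directly.
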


\subsection{Primal, dual and existence}

Our results will be established under the following assumptions on $f$ and $A$.  

\begin{assumption}[\label{ass:weakReg}A convexity assumption]
 The function $f:\R^M\to \R\cup \{+\infty\}$ is a convex lower semi-continuous function with $\interior(\dom(f))\neq \emptyset$.
\end{assumption}

\begin{assumption}[\label{ass:coercivity}Coercivity]
 The functional $J$ is coercive, meaning that $J(\mu) \to \infty$ when $\norm{\mu}_{\calM(\Omega)}\to \infty$. 
\end{assumption}
Notice that Assumption \ref{ass:coercivity} is granted if $f$ is lower-bounded. The following result relates the primal and the dual. 
\begin{proposition}[Existence and strong duality] \label{prop:duality}
Let $\calV \subseteq \Omega$ denote a subset of $\Omega$. 
Assume that there exists $\mu\in \calM(\Omega)$ supported on $\calV$ with $A\mu \in \interior(\dom(f))$. 
Then, under Assumptions \ref{ass:A}, \ref{ass:weakReg}  and \ref{ass:coercivity}, the following statements hold true:
\begin{itemize}
 \item The primal problem $(\calP(\calV))$ has a nonempty set of solutions, bounded in total variation norm.
 \item The dual $(\calD(\calV))$ has a nonempty set of solutions, which is also bounded.
 \item The following strong duality result holds
\begin{equation*}
 \min_{\mu \in \calM(\calV)} \norm{\mu}_{\calM(\calV)} + f(A\mu) = \max_{q \in \R^M, \|A^*q\|_{L^\infty(\calV)} \leq 1} - f^*(q).
\end{equation*}
 \item Let $(\mu^\star,q^\star)$ denote a primal-dual pair. They are related by the following primal-dual relationships:
 \begin{equation}\label{eq:primal_dual_relationship}
  A^*q^\star \in \partial_{\|\cdot\|_\calM}(\mu^\star) \mbox{ and } -q^\star\in \partial f(A\mu^\star).
 \end{equation}
\end{itemize}
\end{proposition}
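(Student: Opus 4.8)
The plan is to derive all four assertions from convex duality in the pairing between $\calM(\calV)$ and $\calC_0(\calV)$, pushing the only genuinely infinite-dimensional step into a finite-dimensional perturbation argument over $\R^M$. Throughout I take $\calV$ closed, hence compact, which is the setting of interest; by continuity of each $A^*q$ this leaves the dual constraint $\norm{A^*q}_{L^\infty(\calV)}\le 1$ unchanged, and it is the natural setting for primal existence.

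For primal existence I would run the direct method. By Assumption~\ref{ass:coercivity} every minimizing sequence is bounded in total variation, so Banach--Alaoglu (using separability of $\calC_0(\calV)$ for compact metric $\calV$) extracts a weak-$*$ convergent subsequence $\mu_n\wstarto\mu^\star$. The objective is weak-$*$ lower semicontinuous: $\norm{\cdot}_\calM$ is a dual norm, hence weak-$*$ lsc, and $\mu\mapsto f(A\mu)$ is weak-$*$ lsc because $A$ is weak-$*$ continuous (Assumption~\ref{ass:A}) and $f$ is lsc (Assumption~\ref{ass:weakReg}). The hypothesis that some $\mu$ supported on $\calV$ has $A\mu\in\interior(\dom f)$ makes the infimum finite, so $\mu^\star$ is a minimizer; the solution set lies in a sublevel set of the coercive $J$ and is therefore bounded.

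For strong duality and dual existence I introduce the value function
\begin{equation*}
 v(y)\eqdef\inf_{\mu\in\calM(\calV)}\norm{\mu}_\calM+f(A\mu+y),\qquad y\in\R^M,
\end{equation*}
which is convex as a partial infimum of a jointly convex function of $(\mu,y)$. A direct computation, using that the conjugate of $\norm{\cdot}_\calM$ in this duality is the indicator of the $L^\infty(\calV)$ unit ball, gives $v^*(q)=f^*(q)+\iota_{\{\norm{A^*q}_{L^\infty(\calV)}\le 1\}}(q)$, so that $v^{**}(0)=\sup_{\norm{A^*q}_{L^\infty(\calV)}\le 1}-f^*(q)$ is precisely the value of $(\calD(\calV))$; weak duality is the inequality $v(0)\ge v^{**}(0)$. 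The feasibility hypothesis yields $0\in\interior(\dom v)$, since perturbing $y$ slightly keeps $A\mu+y\in\dom f$ for the fixed feasible $\mu$. Hence $v$ is finite and continuous at $0$, which simultaneously gives $v(0)=v^{**}(0)$ (no duality gap) and that $\partial v(0)$ is nonempty and compact. Since the set of dual maximizers coincides with $\partial v(0)$, this proves dual existence, strong duality, and dual boundedness at once; boundedness can alternatively be deduced from $\interior(\dom f)\neq\emptyset$, which forces $f^*$ to grow at least linearly.

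Finally, the primal--dual relations \eqref{eq:primal_dual_relationship} follow from equality in the Fenchel--Young inequalities. For an optimal pair $(\mu^\star,q^\star)$, strong duality together with primal and dual attainment forces the sum of the (nonnegative) Fenchel--Young gaps of $\norm{\cdot}_\calM$ and of $f$ to vanish, hence each gap vanishes, which is exactly the pair of subdifferential inclusions in \eqref{eq:primal_dual_relationship}. The step I expect to be the main obstacle is the no-gap identity $v(0)=v^{**}(0)$: this is where the constraint qualification is indispensable, and the point of reducing to the finite-dimensional convex function $v$ on $\R^M$ is precisely that $0\in\interior(\dom v)$ is easy to check and yields continuity of $v$ at $0$. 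Correctly identifying the conjugate of the total variation norm in the $\calM(\calV)$--$\calC_0(\calV)$ pairing is the other point that needs care.
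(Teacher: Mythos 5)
Your proof is correct, and for the duality part it takes a genuinely different, more self-contained route than the paper. The primal existence argument coincides with the paper's: the direct method via coercivity and weak-$*$ lower semicontinuity of $J$. For dual existence, strong duality and dual boundedness, however, the paper proceeds in two separate steps: it invokes a Banach-space Fenchel--Rockafellar theorem (Theorem 9.8.1 of Attouch--Buttazzo--Michaille) to obtain attainment, the absence of a duality gap and the extremality relations, and then proves boundedness of the dual solution set by a coercivity argument --- continuity of $f$ at $A\mu$ makes $q\mapsto f^*(q)-\langle A\mu,q\rangle$ coercive, and on the feasible set $\abs{\langle A\mu,q\rangle}\leq \norm{\mu}_\calM\norm{A^*q}_{L^\infty(\calV)}\leq\norm{\mu}_\calM$, so $f^*$ is coercive there. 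Your value-function argument packages all three conclusions into the single observation that $0\in\interior(\dom v)$ forces the convex function $v$ on $\R^M$ to be locally Lipschitz at $0$, so that $\partial v(0)$ is nonempty and compact and coincides with the dual solution set; this avoids the infinite-dimensional duality theorem entirely (the only infinite-dimensional computation being the elementary conjugate identity $(\norm{\cdot}_\calM)^*=\iota_{\{\norm{\cdot}_{L^\infty(\calV)}\leq 1\}}$) and makes transparent exactly where the qualification $A\mu\in\interior(\dom f)$ enters. Two small points to tighten: state explicitly that $v$ is proper ($v(0)$ is finite by coercivity of $J$ and properness of $f$, and a convex function finite at an interior point of its domain cannot take the value $-\infty$ anywhere); and note that the Fenchel--Young bookkeeping for the dual as written yields $-A^*q^\star\in\partial_{\norm{\cdot}_\calM}(\mu^\star)$ and $q^\star\in\partial f(A\mu^\star)$, which is \eqref{eq:primal_dual_relationship} after the harmless relabeling $q^\star\mapsto -q^\star$; this sign-convention ambiguity is present in the paper itself and is not a gap in your argument.
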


The left inclusion in \eqref{eq:primal_dual_relationship} implies that the support of a solution $\mu^\star$ satisfies: $\supp(\mu^\star) \subseteq \{x\in \Omega, |A^*q^\star(x)|=1 \}$. 

\begin{remark}
    Strong duality may hold under different assumptions. 
    For instance, if $f$ is polyhedral (allowing the hard constraint $A\mu = b$), then strong duality holds \cite{Borwein1992}, but the dual solution set may be unbounded. Similarly, the coercivity of $J$ is not absolutely needed. If $J$ has a finite dimensional constancy space, the primal solution set still exists, but may be unbounded as well. In both cases, the unboundedness of either the primal or dual problem requires extra technicalities and assumptions in the proofs that we decided to discard. 
\end{remark}

\section{Main results}

This section contains our main findings. All the proofs are post-poned to the appendix. 

\subsection{The algorithm}

The algorithm we propose consists in designing a sequence of cell partitions $(\Omega_k)_{k\in \N}$ of $\Omega$.
At each step of the algorithm, $\Omega_{k+1}$ is constructed by dividing a few cells in $\Omega_k$.
Let $\calV_k$ denote the set of vertices of the partition $\Omega_k$:
\begin{equation*}
    \calV_k \eqdef \{\vertices(\omega) \, \vert \, \omega \in \Omega_k\}.
\end{equation*}
At step $k$, we solve \eqref{eq:discdual}, to obtain a solution $q_k$ of the discretized dual problem. 
It satisfies $\|A^*q_k\|_{L^\infty(\calV_k)}\leq 1$ by construction. However, it might be infeasible for the problem \eqref{eq:dual}, i.e. $\norm{A^*q_k}_{L^\infty(\Omega)}> 1$. 
This suggests that we should detect the cells $\omega\in \Omega_k$ for which $\|A^*q_k\|_{L^\infty(\omega)}>1$ and subdivide them. 
To this end, we suppose that we have access to a set of {\em candidate cells} $\Omega^\star_k \subset \Omega_k$ that are likely to satisfy $\norm{A^*q_k}_{L^\infty(\Omega)}> 1$. 
To make it easier to control the growth of $\abs{\calV_k}$ with the iteration number $k$, we propose to not refine all candidates in $\Omega^\star_k$, but only the largest of them. The complete solver is described in Algorithm \ref{alg:genericDisc}.  
One iteration of the algorithm is displayed in Figure \ref{fig:refinement}.
\begin{figure}[!h]
\centering
    \includegraphics[width=.95\textwidth]{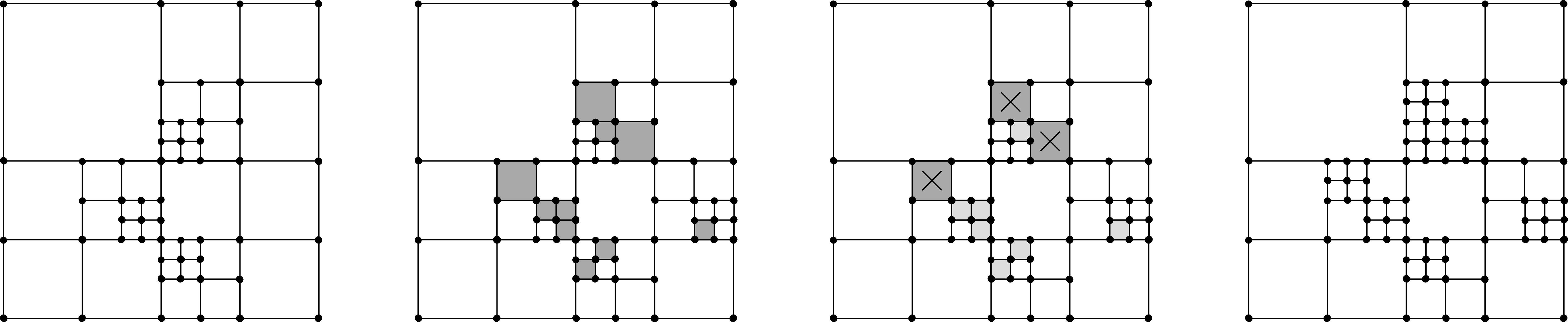}
    \caption{The refinement . From left to right: i) a cell partition, ii) the partition with the candidate cells grayed out, iii) only the candidate cells with largest diameter are selected for refinement and iv) the new resulting cell partition.\label{fig:refinement}}
\end{figure}

\begin{algorithm}[h!] \caption{Adaptive Refinement Algorithm\label{alg:genericDisc}}
\begin{algorithmic}[1]
    \STATE \textbf{Input:} \\
    $\quad \bullet$ Operator $A$ \\ 
    $\quad \bullet$ Initial partition $\Omega_0$ (e.g.  $\Omega_0 = \{\Omega\}$)\\
    $\quad \bullet$ Target precision $J$ \\
    $\quad \bullet$ Solver for the discretized primal \eqref{eq:discprimal} and dual \eqref{eq:discdual} \\
    $\quad \bullet$ Set $k=1$
    \STATE \textbf{WHILE} $\displaystyle \max_{\omega \in \Omega_{k-1}^*} |\omega| \geq 2^{-J}$ 
    \STATE $\qquad 1)$ Determine a solution $q_k$ of \eqref{eq:discdual}
    \STATE $\qquad 2)$ Determine the candidate cells $\Omega_k^\star$.
    \STATE $\qquad 3)$ Subdivide the cells in $\Omega_k^\star$ with largest diameter
    \STATE $\qquad 4)$ $k=k+1$
    \STATE \textbf{Output:}  \\ 
        $\quad \bullet$ The dual solution $q_k$ of \eqref{eq:discdual}. \\
        $\quad \bullet$ The primal solution $\mu_k$ of \eqref{eq:discprimal}.
\end{algorithmic}
\end{algorithm}

\subsection{Assumptions on the selection process}

In this paragraph, we discuss how to construct the set of candidate cells $\Omega_k^\star$. 
Following \cite{FGW2019ExchangeI}, we let $X_k$ denote the set of local maximizers of $\abs{A^*q_k}$ exceeding $1$, i.e.
\begin{align*}
 X_k \eqdef \set{x \in \Omega \, \vert \, x \text{ is a local maximizer of }\abs{A^*q_k}, \abs{A^*q_k(x)}\geq 1}.
\end{align*}
To obtain convergence guarantees we will work under the following hypothesis. 
\begin{assumption}[Generic convergence assumption\label{ass:generic_refinement}]
For each $x\in X_k$, at least one of the cells in $\Omega_k$ containing $x$ is a candidate for refinement. In other terms, $\Omega_k^\star$ satisfies
        \begin{equation*}
         X_k\subset \bigcup_{\omega\in \Omega_k^\star} \omega.
        \end{equation*}
\end{assumption}
The above condition is too weak to allow a control of the numerical complexity. For instance the choice $\Omega_k^\star=\Omega_k$ -- which corresponds to refine uniformly at each iteration -- obeys Assumption~\ref{ass:generic_refinement}, but leads to an exponential growth of $\abs{\Omega_k}$. Our complexity analysis will therefore rely upon the following extra hypothesis. 
\begin{assumption}[Second order approximation\label{ass:second_order_condition}]
    There exists a constant $\kappa>0$ independent of the iteration $k$ and the cell $\omega$ such that for every $\omega\in \Omega_k^\star$, we have
    $$ \Vert A^\star q_k\Vert_{L^\infty(\omega)} \ge 1-\kappa |\omega|^2,$$
    where $|\omega|$ is the edge-length of the cell $\omega$. 
\end{assumption}

\subsection{Construction of selection processes}

In this section, we discuss how to construct rules satisfying assumptions \ref{ass:generic_refinement} and \ref{ass:second_order_condition}.

\subsubsection{Ideal selection} 

An obvious refinement rule that obeys both assumptions \eqref{ass:generic_refinement} and \eqref{ass:second_order_condition} is to let
\begin{align*}  
\Omega_{k, \mathrm{ideal}}^* = \set{ \omega \in \Omega_k \, \vert \, \omega \cap X_k \neq \emptyset}.
\end{align*}
Indeed, Assumption \ref{ass:generic_refinement} can be reformulated as $\set{ \omega \in \Omega_k \, \vert \, \omega \cap X_k \neq \emptyset} \sse \Omega_{k}^*$, which certainly is true for the above choice. As for Assumption \ref{ass:second_order_condition}, note that for all $\omega$ with $X_k\cap \omega \neq \emptyset$, we have $\norm{A^*q_k}_{L^\infty} \geq 1 \geq 1 - \kappa \abs{\omega}^2$, since $\omega$ contains a cell point where $\abs{A^*q_k}$ succeeds one.

In order to apply this rule, we however need to know which cells contain the elements of $X_k$. As discussed in the introduction, this is in general  infeasible. 

\subsubsection{Upper-bounds selections} 

To obtain resolvable but still powerful enough selection processes, we will instead rely on the design of \emph{simple to evaluate upper-bounds} $\Uk(\omega)\in \R$ satisfying $\Uk(\omega) \geq \| A^*q_k\|_{L^\infty(\omega)}$.
Equipped with such an upper-bound, we can define the candidate cells as:
$$\Omega_k^\star=\left\{\omega\in \Omega_k \, \vert \, \Uk(\omega) \geq 1 \right\}.$$
By construction, this selection process guarantees Assumption \ref{ass:generic_refinement}. Indeed, a cell $\omega$ with $\omega \cap X_k$ obviously obeys $\|A^*q_k\|_{L^\infty(\omega)} \geq 1$, and therefore also $\Uk(\omega)\geq \|A^*q_k\|_{L^\infty(\omega)} \geq 1$.

Notice that this principle is similar to a branch-and-bound approach \cite{lawler1966branch}. We base our decisions on upper-bounds, which secure that some regions of space can be safely neglected. An important difference lies in the fact that the objective function $|A^*q_k|$ varies at each iteration, meaning that one region which might have been discarded at one iteration can be refined some iterations later. Let us describe two such upper bounds.

\begin{definition}[A first order selection process]
Assume that $a_m\in C^1(\Omega)$ for all $m$. Let us define
\begin{equation} \label{eq:first order}
\Uk(\omega)=\inf_{v \in \vertx(\omega)} |A^*q_k(v)|+ \kappa_1(q_k,\omega) \diam(\omega),
\end{equation}
with
\begin{equation}\label{eq:def_kappa1_loc}
    \kappa_1(q_k,\omega) = \sum_{m=1}^M |q_k[m]| \|a_m'\|_{L^\infty(\omega)}.
\end{equation} 
The first order selection process is defined as 
\begin{align*}
\Omega_{k,1}^\star=\left\{\omega\in \Omega_k \, \vert \, \Uk(\omega) \geq 1 \right\} 
\end{align*}
\end{definition}
\begin{proposition}\label{prop:first_order_selection}
    The first order selection process $\Omega_{k,1}^\star$ satisfies Assumption \ref{ass:generic_refinement}. However it may not respect Assumption \ref{ass:second_order_condition}.
\end{proposition}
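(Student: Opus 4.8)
The plan is to treat the two claims separately. For the positive part it suffices to verify that the quantity $\Uk(\omega)$ defined in \eqref{eq:first order} is a genuine upper bound, i.e. that $\Uk(\omega)\geq \norm{A^*q_k}_{L^\infty(\omega)}$. Once this is established, Assumption \ref{ass:generic_refinement} follows immediately from the general argument already given for upper-bound selections at the beginning of this subsection: a cell $\omega$ meeting $X_k$ satisfies $\norm{A^*q_k}_{L^\infty(\omega)}\geq 1$, hence $\Uk(\omega)\geq 1$, hence $\omega\in\Omega_{k,1}^\star$, so that $X_k$ is covered by the candidate cells.

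To prove the upper-bound inequality, I would fix $x\in\omega$ and an arbitrary vertex $v\in\vertx(\omega)$. Since cells are convex, the segment $[v,x]$ lies in $\omega$, and the fundamental theorem of calculus along it gives $A^*q_k(x)-A^*q_k(v)=\int_0^1 (A^*q_k)'(v+t(x-v))\cdot(x-v)\,dt$. Bounding the integrand by the triangle inequality $\norm{(A^*q_k)'(\xi)}_2\leq\sum_{m}|q_k[m]|\,\norm{a_m'}_{L^\infty(\omega)}=\kappa_1(q_k,\omega)$ and using $\norm{x-v}_2\leq\diam(\omega)$ yields $|A^*q_k(x)|\leq|A^*q_k(v)|+\kappa_1(q_k,\omega)\diam(\omega)$. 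Since the left-hand side does not depend on $v$, I take the infimum over vertices to obtain $|A^*q_k(x)|\leq\Uk(\omega)$, and then the supremum over $x\in\omega$ to conclude $\norm{A^*q_k}_{L^\infty(\omega)}\leq\Uk(\omega)$.

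For the negative part I would exhibit a family of counterexamples indexed by the cell size, exploiting that the first-order rule controls $\norm{A^*q_k}_{L^\infty(\omega)}$ only to first order in $|\omega|$: the definition gives merely $\norm{A^*q_k}_{L^\infty(\omega)}\geq\Uk(\omega)-\kappa_1(q_k,\omega)\diam(\omega)\geq 1-\kappa_1(q_k,\omega)\diam(\omega)$ for $\omega\in\Omega_{k,1}^\star$, which is linear, not quadratic, in $|\omega|$. Concretely, working in $D=1$ with a single measurement, I would let $A^*q_k$ coincide on a dyadic cell $\omega$ of edge-length $h$ with a smooth symmetric bump that equals $1-h$ at both vertices and rises to an interior peak, scaled so that $\sup_\omega|(A^*q_k)'|\cdot h=h$ (hence $\Uk(\omega)=1$ and the cell is selected), while the peak value stays at $1-c\,h$ for a fixed profile-dependent constant $c\in(0,1)$. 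Then $\omega\in\Omega_{k,1}^\star$ but $\norm{A^*q_k}_{L^\infty(\omega)}=1-c\,h$, so no fixed $\kappa$ can satisfy $1-c\,h\geq 1-\kappa h^2$ uniformly as $h\to 0$.

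The main obstacle is the counterexample rather than the positive part. The key modelling choice is to make $A^*q_k$ oscillate inside the cell — small at the vertices, large derivative, moderate interior peak — so that the vertex values together with the slope term overestimate the true maximum by an amount of order $|\omega|$, not $|\omega|^2$. One must check that such a profile is genuinely $C^1$ (so that $\kappa_1(q_k,\omega)$ is finite and \eqref{eq:first order} applies) and that the cell can be taken dyadic of arbitrarily small edge-length and placed in the interior of $\Omega$ (so that the bump extends to a function vanishing on $\partial\Omega$). This forces the second derivative of $A^*q_k$ at the cell scale to blow up, which is precisely the curvature information the first-order rule ignores and that Assumption \ref{ass:second_order_condition} demands.
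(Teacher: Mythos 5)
Your proof of the positive claim is correct and essentially identical to the paper's: both arguments reduce to showing that $\kappa_1(q_k,\omega)$ bounds the Lipschitz constant of $|A^*q_k|$ on $\omega$ (the paper via $\sup_{x\in\omega}|A^*q_k|'(x)$ and a H\"older bound, you via the fundamental theorem of calculus along $[v,x]$), whence $\Uk(\omega)\geq \norm{A^*q_k}_{L^\infty(\omega)}$ and the generic covering argument applies. For the negative claim the two proofs diverge in substance: the paper contents itself with the one-line observation that a zeroth-order Taylor expansion carries only a first-order remainder, so the slack between $\Uk(\omega)$ and $\norm{A^*q_k}_{L^\infty(\omega)}$ is $O(|\omega|)$ rather than $O(|\omega|^2)$; you instead sketch an explicit family of bump profiles realizing that slack, which is more informative but also carries more proof obligations. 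The one point you should be careful about is that $A^*q_k$ is not a free function: it must be of the form $\sum_m q_k[m]a_m$ with $q_k$ an actual solution of $(\calD(\calV_k))$, so "let $A^*q_k$ coincide with a bump" requires either choosing the $a_m$ and the data so that this dual solution arises, or arguing (as you implicitly do) that the statement only claims the assumption \emph{may} fail, so a single admissible instance suffices. A slightly cheaper construction avoiding blowing up second derivatives is to exploit cancellation in \eqref{eq:def_kappa1_loc}: with two smooth measurements satisfying $a_1'=-a_2'$ on $\omega$ and $q_k=(1,1)$, the function $A^*q_k$ is constant on $\omega$ while $\kappa_1(q_k,\omega)$ stays bounded away from zero, so a cell with constant value $1-\Theta(|\omega|)$ is selected yet violates the quadratic lower bound for small $|\omega|$. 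Either way, your argument establishes the proposition.
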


In order to satisfy both Assumption \ref{ass:generic_refinement} and Assumption \ref{ass:second_order_condition}, we will use second order selection rules. 
\begin{definition}[A second order selection process]\label{def:second_order_selection}
Assume that $a_m\in C^2(\Omega)$ for all $m$. 
For any cell $\omega$, and all $q_k\in \R^M$, define 
\begin{equation}
\label{eq:second_order_bnd}
\Uk(\omega) \eqdef \inf_{v\in \vertices(\omega)} \sup_{x\in \omega}  |A^*q_k(v) + \langle (A^*q_k)'(v), x-v\rangle| +  \frac{\kappa_2(q_k,\omega)}{2} \Vert x-v\Vert^2,
\end{equation}
with 
\begin{equation}
    \label{eq:defin:kappa_2}
     \kappa_{2}(q_k,\omega) = \sum_{m=1}^M |q_k[m]|\sup_{x\in \omega} \|a_m''(x)\|_{2\to 2}.  
\end{equation}
The second order selection process is defined as 
\begin{align}
\Omega_{k,2}^\star=\left\{\omega\in \Omega_k \, \vert \, \Uk(\omega) \geq 1 \right\} \label{eq:Second order}
\end{align}
\end{definition}

\begin{proposition}\label{prop:second_order_selection}
    If the sequence $(q_k)_{k\in \N}$ is uniformly bounded, then the second order selection process $\Omega_{k,2}^\star$ satisfies both Assumption \ref{ass:generic_refinement} and \ref{ass:second_order_condition}.
\end{proposition}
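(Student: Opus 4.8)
The plan is to verify the two assumptions separately, both building on the Taylor-with-remainder upper bound that defines $\Uk(\omega)$ in \eqref{eq:second_order_bnd}. The crucial starting observation is that for any $m$ and any $v,x\in\omega$, the second-order Taylor expansion of the smooth function $A^*q_k$ around $v$ satisfies
\begin{equation*}
|A^*q_k(x)| \leq |A^*q_k(v) + \langle (A^*q_k)'(v), x-v\rangle| + \frac{\kappa_2(q_k,\omega)}{2}\Vert x-v\Vert^2,
\end{equation*}
since the integral form of the Taylor remainder is controlled by $\sup_{x\in\omega}\Vert (A^*q_k)''(x)\Vert_{2\to 2} \leq \kappa_2(q_k,\omega)$, the last bound coming from the triangle inequality applied to $(A^*q_k)'' = \sum_m q_k[m] a_m''$ together with the definition \eqref{eq:defin:kappa_2}. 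Taking the supremum over $x\in\omega$ and then the infimum over $v\in\vertices(\omega)$ on the right-hand side, while noting the left-hand side does not depend on $v$, shows $\Uk(\omega)\geq \Vert A^*q_k\Vert_{L^\infty(\omega)}$. This is the genuine upper-bound property, and by the general argument already given in the excerpt for upper-bound selections, it immediately yields Assumption \ref{ass:generic_refinement}: any cell meeting $X_k$ has $\Vert A^*q_k\Vert_{L^\infty(\omega)}\geq 1$, hence $\Uk(\omega)\geq 1$, hence $\omega\in\Omega_{k,2}^\star$.

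The substance of the proposition is Assumption \ref{ass:second_order_condition}, where I would fix an arbitrary $\omega\in\Omega_{k,2}^\star$ and must produce the lower bound $\Vert A^*q_k\Vert_{L^\infty(\omega)}\geq 1-\kappa|\omega|^2$. By definition $\Uk(\omega)\geq 1$, so it suffices to show $\Vert A^*q_k\Vert_{L^\infty(\omega)}\geq \Uk(\omega) - \kappa|\omega|^2$ for a uniform constant $\kappa$; that is, to control the gap between the upper bound and the true supremum. Choose a vertex $v^\star$ attaining the infimum in \eqref{eq:second_order_bnd} and a point $x^\star\in\omega$ attaining the inner supremum, so that $\Uk(\omega) = |A^*q_k(v^\star) + \langle (A^*q_k)'(v^\star), x^\star - v^\star\rangle| + \frac{\kappa_2(q_k,\omega)}{2}\Vert x^\star - v^\star\Vert^2$. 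The first term is the linearization of $A^*q_k$ at $x^\star$, which differs from the true value $|A^*q_k(x^\star)|\leq \Vert A^*q_k\Vert_{L^\infty(\omega)}$ by at most another second-order remainder $\frac{\kappa_2(q_k,\omega)}{2}\Vert x^\star - v^\star\Vert^2$ (after a reverse triangle inequality on the absolute values). Since $\Vert x^\star - v^\star\Vert\leq \diam(\omega) = \sqrt{D}\,|\omega|$, both remainder terms are bounded by $\frac{\kappa_2(q_k,\omega)}{2}\cdot D\,|\omega|^2$, giving $\Uk(\omega) \leq \Vert A^*q_k\Vert_{L^\infty(\omega)} + \kappa_2(q_k,\omega)\,D\,|\omega|^2$.

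It then remains to absorb $\kappa_2(q_k,\omega)$ into a constant independent of $k$ and $\omega$. Here the uniform boundedness hypothesis on $(q_k)$ enters: if $\Vert q_k\Vert\leq C$ for all $k$, then from \eqref{eq:defin:kappa_2} and the definition \eqref{def:kappa_r} of $\kappa_2$ one gets $\kappa_2(q_k,\omega)\leq \sum_m |q_k[m]|\sup_{x\in\Omega}\Vert a_m''(x)\Vert_{2\to 2}\lesssim \Vert q_k\Vert\,\kappa_2 \leq C\kappa_2$, a bound uniform in both $\omega$ and $k$. Setting $\kappa \eqdef C\kappa_2 D$ then delivers Assumption \ref{ass:second_order_condition}.

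I expect the main obstacle to be purely bookkeeping rather than conceptual: one must be careful that the two second-order remainders (one in the upper-bound direction, one in the lower-bound direction) are each controlled by the \emph{same} $\kappa_2(q_k,\omega)$ evaluated on $\omega$, and that the reverse triangle inequality is applied correctly to pass between $|A^*q_k(x^\star)|$ and the absolute value of its linearization. The only place where a hypothesis is genuinely needed is the uniform boundedness of $(q_k)$, without which $\kappa_2(q_k,\omega)$ could in principle blow up and the constant $\kappa$ would fail to be independent of $k$; this is precisely why the proposition states that assumption, and it is the one ingredient I would flag as load-bearing.
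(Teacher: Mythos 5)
Your proof is correct and follows essentially the same route as the paper: the Taylor bound with the Lipschitz-gradient remainder gives the two-sided estimate $\Uk(\omega) - \kappa_2(q_k,\omega)\diam(\omega)^2 \leq \|A^*q_k\|_{L^\infty(\omega)} \leq \Uk(\omega)$, whose right-hand side yields Assumption \ref{ass:generic_refinement} and whose left-hand side yields Assumption \ref{ass:second_order_condition} once the uniform boundedness of $(q_k)_{k\in\N}$ is used to bound $\kappa_2(q_k,\omega)$ independently of $k$ and $\omega$. The only difference is cosmetic (you evaluate at the optimizing vertex and point rather than optimizing at the end), and you are in fact more explicit than the paper about where the boundedness hypothesis is load-bearing.
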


Importantly, notice that the problem
\[ \sup_{x\in \omega}  |A^*q_k(v) + \langle (A^*q_k)'(v), x-v\rangle|+\frac{\kappa_2(q_k,\omega)}{2}\Vert x-v\Vert^2,\]
consists in maximizing a convex function over a polyhedron.  A solution is therefore attained on $\vertx(\omega)$, and can be evaluated in constant time per cell.

\begin{remark}
\label{rem:an_infinite_number_of_estimators_of_derivative}
    The values $\kappa_1(q_k,\omega)$ and $\kappa_2(q_k,\omega)$ can be replaced by any upper-bound on the Lipschitz constant of $A^*q_k$ and the Lipshitz constant of $(A^*q_k)'$ respectively. In particular, it is possible to use the global bounds $\kappa_1(q_k,\omega) = \kappa_1 \|q_k\|_2$ and $\kappa_2(q_k,\omega) = \kappa_2 \|q_k\|_2$, where $\kappa_1$ and $\kappa_2$ are defined in equation \eqref{def:kappa_r}.
\end{remark}

\subsubsection{Combining upper-bounds and lower-bounds on the gradient norm}

The larger $\Omega_k^\star$, the higher the chances of selecting unwanted cells for refinement. 
To reduce the cardinality of the candidate cells, it makes sense to only refine the cells where the function $\abs{A^*q_k}$ might surpass $1$ \emph{and} where the gradient's norm $\|(A^*q_k)'(x)\|_2$ might cancel.
Indeed, Assumption~\ref{ass:generic_refinement} only requires the {\em local maximizers} of $|A^\star q_k|$ that surpass $1$ to be selected. In cells containing maximizers, the gradient of $|A^\star q_k|$ vanishes. Consequently we can design a selection process based on lower bounds of the gradient.
\begin{definition}[Second order selection process with first order gradient\label{def:second_order_selection_with_grad}]
Assume that $a_m\in C^2(\Omega)$ for all $m$. Define $\Uk(\omega)$ and $\kappa_2(q_k,\omega)$ as in \eqref{eq:second_order_bnd} and \eqref{eq:defin:kappa_2} respectively.
For any cell $\omega$, and all $q_k\in \R^M$, define
\begin{equation}
\Gk(\omega) = \sup_{v \in \vertx(\omega)} \norm{(A^*q_k)'(v)}_2 - \kappa_2(q_k,\omega)\diam(\omega)\label{eq:grad1} 
\end{equation}
The second order selection process with first order gradient is defined as $$\Omega_{k,2, 1}^\star=\left\{\omega\in \Omega_k \, \vert \, \Uk(\omega) \geq 1 \text{ and } \Gk(\omega) \le 0\right \}.$$
\end{definition}
\begin{proposition}\label{prop:second_order_selection_with_grad}
    If the sequence $(q_k)_{k\in \N}$ is uniformly bounded, then the selection process 
    $\Omega_{k,2,1}^\star$ satisfies both Assumption~\ref{ass:generic_refinement} and \ref{ass:second_order_condition}. Incorporating gradient lower bounds reduces the cardinality of $\Omega_k^\star$ in the sense that the cardinality of $\Omega_k^\star$ in Definition~\ref{def:second_order_selection_with_grad} is non greater than the one of $\Omega_k^\star$ in Definition~\ref{def:second_order_selection}.
\end{proposition}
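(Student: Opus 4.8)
The plan is to reduce almost everything to the already-established Proposition~\ref{prop:second_order_selection}, and to isolate the only genuinely new ingredient, namely the gradient test $\Gk(\omega)\le 0$. Since the upper bound $\Uk$ in Definition~\ref{def:second_order_selection_with_grad} is \emph{identical} to the one in Definition~\ref{def:second_order_selection}, the extra constraint $\Gk(\omega)\le 0$ can only shrink the selected set, so that $\Omega_{k,2,1}^\star\sse\Omega_{k,2}^\star$. This inclusion is immediate from the two definitions and instantly yields two of the three claims: the cardinality bound $\abs{\Omega_{k,2,1}^\star}\le\abs{\Omega_{k,2}^\star}$, and Assumption~\ref{ass:second_order_condition}, because the latter is quantified over \emph{all} cells of the selected set and already holds on the larger set $\Omega_{k,2}^\star$ by Proposition~\ref{prop:second_order_selection} (the uniform boundedness of $(q_k)$ being used only to inherit the uniform constant $\kappa$). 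Thus the real work is to re-prove Assumption~\ref{ass:generic_refinement} after having added a constraint that removes cells.

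First I would show that $\Gk(\omega)$ is a genuine \emph{lower} bound on the gradient norm over the cell, i.e. $\Gk(\omega)\le\inf_{x\in\omega}\norm{(A^*q_k)'(x)}_2$. The quantity $\kappa_2(q_k,\omega)$ defined in \eqref{eq:defin:kappa_2} bounds $\sup_{x\in\omega}\norm{(A^*q_k)''(x)}_{2\to2}$ and is therefore a Lipschitz constant of the gradient $(A^*q_k)'$ on the convex cell $\omega$. Consequently, for any vertex $v\in\vertx(\omega)$ and any $x\in\omega$ a reverse triangle inequality gives $\norm{(A^*q_k)'(x)}_2\ge\norm{(A^*q_k)'(v)}_2-\kappa_2(q_k,\omega)\norm{x-v}_2\ge\norm{(A^*q_k)'(v)}_2-\kappa_2(q_k,\omega)\diam(\omega)$. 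Taking the supremum over $v$ on the right-hand side and then the infimum over $x$ on the left-hand side produces exactly $\Gk(\omega)\le\inf_{x\in\omega}\norm{(A^*q_k)'(x)}_2$.

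Next I would locate the maximizers. For $x^\star\in X_k$ we have $\abs{A^*q_k(x^\star)}\ge1>0$; since every $a_m\in\calC_0(\Omega)$ vanishes on $\partial\Omega$, so does $A^*q_k$, and hence $x^\star$ cannot lie on the boundary. Being an interior local maximizer of $\abs{A^*q_k}$ at a point where $A^*q_k$ does not vanish --- where $\abs{A^*q_k}$ is smooth --- it satisfies the first-order optimality condition $(A^*q_k)'(x^\star)=0$. Therefore any cell $\omega\in\Omega_k$ containing $x^\star$ obeys $\inf_{x\in\omega}\norm{(A^*q_k)'(x)}_2=0$, whence $\Gk(\omega)\le0$ by the previous step. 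The same cell also satisfies $\Uk(\omega)\ge\norm{A^*q_k}_{L^\infty(\omega)}\ge\abs{A^*q_k(x^\star)}\ge1$, exactly as in the proof of Proposition~\ref{prop:second_order_selection}. Both selection criteria being met, $\omega\in\Omega_{k,2,1}^\star$, so $X_k\sse\bigcup_{\omega\in\Omega_{k,2,1}^\star}\omega$, which is Assumption~\ref{ass:generic_refinement}.

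The main obstacle, and the step I would treat most carefully, is the claim that every $x^\star\in X_k$ is an interior point with vanishing gradient: this is what guarantees that adding the test $\Gk(\omega)\le0$ never discards a cell needed for Assumption~\ref{ass:generic_refinement}. The argument hinges on the boundary-vanishing property $a_m\in\calC_0(\Omega)$ together with $\abs{A^*q_k(x^\star)}\ge1$; without the former a maximizer could sit on $\partial\Omega$ with non-vanishing gradient, and the gradient test could wrongly reject its cell. Everything else is either a one-line set inclusion or the Lipschitz estimate above, both routine.
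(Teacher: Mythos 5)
Your proof is correct and follows essentially the same route as the paper's: the Lipschitz estimate showing $\Gk(\omega)\le\inf_{x\in\omega}\norm{(A^*q_k)'(x)}_2$, the observation that cells meeting $X_k$ pass both tests, and the inclusion $\Omega_{k,2,1}^\star\sse\Omega_{k,2}^\star$ to inherit Assumption~\ref{ass:second_order_condition} and the cardinality bound. The only difference is that you explicitly justify why every $x^\star\in X_k$ is an interior critical point (via $a_m\in\calC_0(\Omega)$ and $\abs{A^*q_k(x^\star)}\ge 1$), a step the paper leaves implicit; this is a welcome clarification rather than a departure.
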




\begin{remark}
Following Remark~\ref{rem:an_infinite_number_of_estimators_of_derivative}, the definition of $\kappa_2(q_k,\omega)$ given in ~\eqref{eq:defin:kappa_2} can be replaced by any upper bound of the Lipschitz constant of the gradient of $|A^* q_k|$. Moreover, we use a first order Taylor expansion for $\Gk$. It is possible to replace or even combine this lower bound with bounds stemming from higher order Taylor expansions.
\end{remark}

\subsection{Generic convergence guarantees}

To obtain a generic convergence result, we first need to prove that the algorithm is well defined. To this end, we introduce the following assumption.

\begin{assumption}[Well-posedness of the algorithm \label{ass:wellposedness}]
The initial set of vertices $\calV_0$ is admissible in the sense that there exists $\mu\in \calM(\calV_0)$ with $A\mu \in \interior(\dom(f))$.
\end{assumption}

\begin{theorem} \label{th:generalConv} 
    Under Assumptions \ref{ass:A}, \ref{ass:weakReg}, \ref{ass:coercivity}, \ref{ass:generic_refinement} and \ref{ass:wellposedness}, the sequences $(\mu_k)_{k \in \N}$ and $(q_k)_{k\in \N}$ defined in Algorithm \eqref{alg:genericDisc} are well-defined. 
    They contain subsequences that converge weakly to solutions $\mu^\star$ and $q^\star$ of \eqref{eq:primal} and \eqref{eq:dual}, as well as in optimal function value. If either the primal or dual solution is unique, the whole corresponding sequence converges.
\end{theorem}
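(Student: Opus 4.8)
The plan is to combine weak-$*$ compactness of the iterates with the geometric consequences of refining only the largest candidate cells.

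\textbf{Well-posedness and compactness.} First I would observe that the vertex sets are nested, $\calV_0 \subseteq \calV_1 \subseteq \cdots$, so each $\calV_k$ inherits the admissibility of $\calV_0$ from Assumption~\ref{ass:wellposedness}. Proposition~\ref{prop:duality} applied to $\calV = \calV_k$ then guarantees that $\mu_k$ and $q_k$ exist and that strong duality holds at every step, settling well-posedness. Monotonicity of the feasible sets shows that the optimal values $\mathrm{val}(\calP(\calV_k))$ are non-increasing in $k$ and bounded below by $\mathrm{val}(\calP(\Omega))$ (since $\calM(\calV_k)\subseteq\calM(\Omega)$); hence they converge to some $v_\infty \geq \mathrm{val}(\calP(\Omega))$. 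Since $J(\mu_k) = \mathrm{val}(\calP(\calV_k)) \le \mathrm{val}(\calP(\calV_0))$, coercivity (Assumption~\ref{ass:coercivity}) bounds $\norm{\mu_k}_\calM$ uniformly. For $(q_k)$ I would use that each $q_k$ is feasible for $\calD(\calV_0)$ (its constraints form a subset of those of $\calD(\calV_k)$) and satisfies $-f^*(q_k)\ge \mathrm{val}(\calP(\Omega))$; since the solution set of $\calD(\calV_0)$ is bounded (Proposition~\ref{prop:duality}), its recession cone $\{d : A^*d=0 \text{ on } \calV_0,\ (f^*)_\infty(d)\le 0\}$ is trivial, so this super-level set is bounded and $(q_k)$ is bounded too. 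Passing to a subsequence, $\mu_{k_j}\wstarto \mu^\star$ in $\calM(\Omega)$ (Banach--Alaoglu) and $q_{k_j}\to q^\star$ in $\R^M$.

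\textbf{Dual feasibility of $q^\star$ (the crux).} This is where Assumption~\ref{ass:generic_refinement} and the dyadic refinement rule enter, and I expect it to be the main obstacle. Suppose toward a contradiction that $\norm{A^*q^\star}_{L^\infty(\Omega)} > 1$. Because $q\mapsto A^*q = \sum_m q[m]a_m$ sends the ball $\{\norm{q}_2\le C\}$, with $C=\sup_k\norm{q_k}_2$, into an equicontinuous family — each $a_m\in\calC_0(\Omega)$ being uniformly continuous — there is a modulus of continuity $\rho$, independent of $k$, with $|A^*q_k(x)-A^*q_k(y)|\le \rho(\norm{x-y}_2)$. Uniform convergence $A^*q_{k_j}\to A^*q^\star$ then yields $\eta>0$ and, for large $j$, a global maximizer $\bar x_j$ of $|A^*q_{k_j}|$ with $|A^*q_{k_j}(\bar x_j)| > 1+2\eta$; in particular $\bar x_j\in X_{k_j}$. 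By Assumption~\ref{ass:generic_refinement}, $\bar x_j$ lies in some candidate cell $\omega_j\in\Omega_{k_j}^\star$, whose vertices $v$ obey $|A^*q_{k_j}(v)|\le 1$. Hence $1+2\eta < |A^*q_{k_j}(\bar x_j)| \le 1 + \rho(\diam(\omega_j))$, forcing $\diam(\omega_j)\ge c_0$ for a constant $c_0>0$. Thus for infinitely many $k$ the largest candidate cell has edge-length at least $c_0/\sqrt D$. This contradicts the refinement rule: there are only finitely many dyadic cells of edge-length $\ge c_0/\sqrt D$ in $[0,1]^D$, and each is subdivided at most once over the whole run (once split it never reappears), so only finitely many iterations can refine at that scale. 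Therefore $\norm{A^*q^\star}_{L^\infty(\Omega)}\le 1$ and $q^\star$ is feasible for \eqref{eq:dual}.

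\textbf{Optimality and value convergence.} With $q^\star$ feasible, weak duality gives $-f^*(q^\star)\le \mathrm{val}(\calD(\Omega)) = \mathrm{val}(\calP(\Omega))$, the equality being strong duality on $\Omega$ (Proposition~\ref{prop:duality}). On the other hand, lower semicontinuity of $f^*$ and $q_{k_j}\to q^\star$ give $v_\infty = \lim_j (-f^*(q_{k_j})) \le -f^*(q^\star)$, while $v_\infty \ge \mathrm{val}(\calP(\Omega))$ from the first step. Chaining these forces $-f^*(q^\star) = \mathrm{val}(\calP(\Omega)) = v_\infty$, so $q^\star$ solves \eqref{eq:dual} and $\mathrm{val}(\calP(\calV_k))\to\mathrm{val}(\calP(\Omega))$. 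For the primal, $A$ is weak-$*$ continuous (Assumption~\ref{ass:A}), so $A\mu_{k_j}\to A\mu^\star$; combining weak-$*$ lower semicontinuity of $\norm{\cdot}_\calM$ with lower semicontinuity of $f$ yields $J(\mu^\star)\le \liminf_j J(\mu_{k_j}) = \mathrm{val}(\calP(\Omega))$, and since $\mu^\star\in\calM(\Omega)$ is feasible, $\mu^\star$ solves \eqref{eq:primal}. Finally, if the primal solution is unique, every weak-$*$ convergent subsequence of the bounded (hence relatively weak-$*$ compact and metrizable) sequence $(\mu_k)$ has a further subsequence converging to a primal solution, necessarily $\mu^\star$; the subsequence principle then gives $\mu_k\wstarto\mu^\star$, and the same argument applies verbatim to $(q_k)$ in $\R^M$.
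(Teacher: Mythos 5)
Your proposal is correct and follows essentially the same route as the paper's proof: existence and boundedness of the iterates via nestedness, coercivity and level-set boundedness of $f^*$ on the initial dual feasible set; equicontinuity of $(A^*q_k)$ from the uniform continuity of the $a_m$; dual feasibility of the limit by contradiction, using Assumption~\ref{ass:generic_refinement} and the fact that only finitely many dyadic cells of edge-length above a fixed threshold exist and each can be subdivided only once; and optimality via lower semicontinuity and duality. The only (cosmetic) difference is that you close the argument by sandwiching optimal values using strong duality on $\Omega$, whereas the paper verifies the primal--dual extremality of the limit pair directly from the discrete strong-duality identity $f^*(q_k)+f(A\mu_k)=-\norm{\mu_k}_\calM$.
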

\begin{proof}
    The proof of this theorem is postponed to Section \ref{sec:proof_generic_convergence}.
\end{proof}

\begin{remark}
	In \cite{FGW2019ExchangeI}, additional assumptions were made (either smoothness of $f$  or surjectivity of $A$ restricted to $\calM(\calV_0)$) to obtain generic convergence. The reason we can remove that assumption is based on a refined analysis. 
\end{remark}
\begin{remark}
The proof of this result relies on the fact that the sequence $(\calV_k)_{k\in \N}$ is nested. 
In exchange algorithms, it is possible to not only add, but also discard points from $\calV_k$ to construct $\calV_{k+1}$.
The obvious interest is to reduce the numerical complexity.
We do not know if it possible to adapt the algorithm and the proof to allow for points suppression as well.
\end{remark}

\subsection{Linear convergence rates}

Having established the generic convergence result, we move on to providing an  eventual linear convergence rate under additional regularity conditions. We first need a couple of additional regularity conditions on $f$, $A$ and the primal-dual solution pair, which are similar to those in \cite{FGW2019ExchangeI} and \cite{pieper2019linear}.

\begin{assumption}[Linear convergence conditions\label{ass:regularity}]
\
\begin{itemize}
    \item The functionals $a_m$ are twice differentiable: $a_m\in C^2_0(\Omega)$ for all $1\leq m\leq M$.
    \item The function $f$ is convex, differentiable with an $L$-Lipschitz gradient.
\end{itemize}
\end{assumption}

Following \cite{duval2015exact}, we also require the following condition.
\begin{assumption}[Non-degenerate source condition\label{ass:nondegenerate}] 
 We say that the \emph{non-degenerate source condition} \cite{duval2015exact} holds if we have the following:
 \begin{itemize}
  \item The solution $\mu^\star$ of \eqref{eq:primal} is unique and supported on $S\in N$ points
   \begin{align*}
    \mu^\star = \sum_{s=1}^S \alpha_s^\star \delta_{x_s^\star}
 \end{align*}
 for some $\alpha_s^\star \in \R$ and $x_s^\star \in \Omega$. In what follows, we let $X^\star \eqdef \{x_1^\star, \hdots, x_S^\star\}$.
    \item The \emph{dual certificate} $\abs{A^*q^\star}$ is only equal to $1$ in the points $x_1^{\star}, \dots, x_S^{\star}$ and is strictly concave around those points. 
    This ensures the existence of a parameter $\gamma > 0$ and a radius $R >0$ with
        \begin{align} \label{eq:reg}
        B(x_{s_1}^\star,R)\cap B(x_{s_2}^\star,R) = \emptyset, &  \quad \forall s_1\neq s_2 \\
     \sgn(A^*q^\star(x)) (A^*q^\star)'' \preccurlyeq - \gamma \Id  \ &\text{ for $x$ with } \dist(x, X^\star) \leq R, \nonumber \\ 
        \abs{A^*q^\star(x)} \leq 1- \frac{\gamma R^2}{2} \ &\text{ for $x$ with } \dist(x, X^\star) \geq R. \label{eq:reg2}
    \end{align}

    \end{itemize}
\end{assumption}

This last assumption is generic, given that the solution is unique. It is a condition that has appeared in the literature as a mean to prove recovery of sparse measures using problems of the form \eqref{eq:primal} -- see e.g. \cite{poon2018support,candes2014towards}.
We can now formulate our main result.  

\begin{theorem} \label{th:linConv}
Under Assumptions \ref{ass:coercivity}, \ref{ass:generic_refinement}, \ref{ass:second_order_condition}, \ref{ass:regularity} and \ref{ass:nondegenerate}, Algorithm \ref{alg:genericDisc} eventually converges linearly. That is, there exists constants $k_0\in \N$,  $c>0$ (depending on $A$, $f$ and $\mu^\star$) such that the algorithm terminates in no more than $k = k_0 + c SJ$ iterations. 
For sufficiently large $J$, we further have
\begin{align*}
|\calV_k| &= O(J), \qquad &\textrm{controlled complexity}\\ 
\distH(X^\star | \vertx(\Omega_k^\star)) &\lesssim 2^{-J}, \qquad &\textrm{controlled localization} \\
\|q_k-q^\star\|_2  &\lesssim 2^{-J}, \qquad &\textrm{certificate on the dual} \\ 
J(\mu_k) - J(\mu^\star) &\lesssim 2^{-2J}, \qquad &\textrm{certificate on the primal}.
\end{align*}
\end{theorem}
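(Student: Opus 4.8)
The plan is to bootstrap from the generic convergence of Theorem~\ref{th:generalConv}: since Assumption~\ref{ass:nondegenerate} makes $\mu^\star$ unique and the strong convexity of $f^*$ (coming from the $L$-Lipschitz gradient of $f$ in Assumption~\ref{ass:regularity}) makes $q^\star$ unique, the whole sequence satisfies $q_k\to q^\star$, and hence $A^*q_k\to A^*q^\star$ together with its derivatives. The first and central step is a \emph{localization-and-counting} lemma. I would fix a burn-in index $k_0$ after which the relevant $C^2$-distance between $A^*q_k$ and $A^*q^\star$ is small enough that $|A^*q_k|$ inherits from $|A^*q^\star|$ both the strict concavity $\sgn(A^*q_k)(A^*q_k)''\preccurlyeq -\tfrac{\gamma}{2}\Id$ on $\{\dist(\cdot,X^\star)\le R\}$ and the uniform gap $|A^*q_k|\le 1-\tfrac{\gamma R^2}{4}$ on $\{\dist(\cdot,X^\star)\ge R\}$. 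By Assumption~\ref{ass:second_order_condition} every candidate cell obeys $\norm{A^*q_k}_{L^\infty(\omega)}\ge 1-\kappa|\omega|^2$, so it must meet the super-level set $\{|A^*q_k|\ge 1-\kappa|\omega|^2\}$; the Hessian lower bound confines this set to balls of radius $O(|\omega|)$ around the $x_s^\star$, while the far-field gap forbids candidacy once $|\omega|$ falls below a fixed constant. This yields: (i) every candidate cell of edge-length $2^{-j}$ lies within $O(2^{-j})$ of $X^\star$, and (ii) at each scale there are only $O(1)$ candidate cells per source, hence $O(S)$ in total (plus $O(1)$ spurious far cells that vanish below a constant scale).

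From the counting I would read off the complexity and iteration count. Since every refined cell is a candidate and the tree is nested, the total number of refinement events over scales $0,\dots,J$ is $\sum_j O(S)=O(SJ)$; as each iteration subdivides at least one (largest) candidate cell, the algorithm runs for at most $k_0+cSJ$ iterations, and the terminal vertex set satisfies $|\calV_k|=O(SJ)+O(1)=O(J)$ with $S$ absorbed into the constant. At termination all candidate cells have edge-length $<2^{-J}$ and, by localization, their vertices lie within $O(2^{-J})$ of $X^\star$, which is exactly $\distH(X^\star\,|\,\vertx(\Omega_k^\star))\lesssim 2^{-J}$.

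For the two error certificates I would pass through the duality gap. Writing $d_k$ and $d^\star$ for the optimal values of \eqref{eq:discdual} and \eqref{eq:dual}, the inclusion $\calV_k\subset\Omega$ gives $d_k\ge d^\star$, while discretized strong duality (Proposition~\ref{prop:duality}) gives $J(\mu_k)=d_k$ and $J(\mu^\star)=d^\star$. The key quantitative input is that the continuous infeasibility of $q_k$ is \emph{quadratically} small: its near-peak vertices sit within $2^{-J}$ of the (quadratic) maxima of $|A^*q_k|$, so a second-order Taylor expansion yields $\norm{A^*q_k}_{L^\infty(\Omega)}\le 1+O(2^{-2J})$. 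Rescaling $\tilde q_k=q_k/\norm{A^*q_k}_{L^\infty(\Omega)}$ produces a point feasible for \eqref{eq:dual} with $\norm{q_k-\tilde q_k}\lesssim 2^{-2J}$ (the $q_k$ being bounded), whence $d^\star\ge -f^*(\tilde q_k)\ge d_k-O(2^{-2J})$. This pins the gap, $0\le d_k-d^\star\lesssim 2^{-2J}$, giving the primal certificate $J(\mu_k)-J(\mu^\star)=d_k-d^\star\lesssim 2^{-2J}$. For the dual certificate I would use that $f^*$ is $\tfrac1L$-strongly convex, so the variational inequality at the constrained maximizer $q^\star$ yields the quadratic growth $-f^*(q^\star)+f^*(\tilde q_k)\ge \tfrac{1}{2L}\norm{\tilde q_k-q^\star}^2$; combining with $-f^*(\tilde q_k)\ge d^\star-O(2^{-2J})$ gives $\norm{\tilde q_k-q^\star}^2\lesssim 2^{-2J}$, hence $\norm{q_k-q^\star}\le\norm{q_k-\tilde q_k}+\norm{\tilde q_k-q^\star}\lesssim 2^{-J}$.

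The main obstacle I anticipate is the localization-and-counting step, and within it the passage from the \emph{qualitative} convergence $q_k\to q^\star$ to a \emph{uniform-in-$k$} quadratic geometry of $|A^*q_k|$ near $X^\star$: one must secure the strict concavity and the far-field gap simultaneously for all $k\ge k_0$ with constants independent of $k$, since every subsequent estimate --- the $O(1)$-per-scale count, the $O(2^{-j})$ localization, and the $O(2^{-2J})$ infeasibility --- rests on these uniform bounds. By contrast, the rescaling and strong-convexity argument delivering $\norm{q_k-q^\star}\lesssim 2^{-J}$ is comparatively routine once the quadratic infeasibility bound is in hand.
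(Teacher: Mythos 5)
Your proposal is correct and its core --- burn-in via generic convergence, transferring the nondegeneracy of $|A^*q^\star|$ to a uniform-in-$k$ quadratic geometry of $|A^*q_k|$, confining candidate cells by playing Assumption~\ref{ass:second_order_condition} against the concavity/far-field-gap bounds, and a Whitney-type count of $O(1)$ cells per source per scale giving $O(SJ)$ refinement events --- is exactly the paper's route (Propositions~\ref{prop:Aqknondegenerate}, \ref{prop:behaviorupperbound}, \ref{prop:structural_properties} and \ref{prop:counting_cells}; the paper phrases the per-scale count as the integral estimate $\int_\Omega \max(\ell, c\|x-x_s^\star\|_2)^{-D}dx \lesssim 1+J$). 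Where you genuinely diverge is in the error certificates: the paper imports the inequalities $\|q_k-q^\star\|_2\lesssim \distH(\calV_k\,|\,X_k)$ and $J(\mu_k)-J(\mu^\star)\lesssim \distH(\calV_k\,|\,X^\star)^2$ wholesale from the companion paper \cite{FGW2019ExchangeI} (its Lemmas 3.5--3.6 and Propositions 3.7, 3.12), whereas you re-derive them inline via the rescaling $\tilde q_k = q_k/\|A^*q_k\|_{L^\infty(\Omega)}$, the quadratic infeasibility bound, and the $1/L$-strong convexity of $f^*$; this is essentially how those lemmas are proved there, so your version is more self-contained at the cost of reproving known estimates. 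One point to treat with care, which your sketch leaves implicit: the claim that the superlevel set $\{|A^*q_k|\ge 1-\kappa|\omega|^2\}$ sits in $O(|\omega|)$-balls \emph{around the $x_s^\star$} (rather than around the local maximizers $x_{k,s}$ of $|A^*q_k|$) requires $\|x_{k,s}-x_s^\star\|\lesssim|\omega|$ and $\distH(\calV_k\,|\,X_k)\lesssim|\omega|$ at the iteration where scale-$|\omega|$ cells are refined; this is a scale-by-scale bootstrap (the paper's chain \eqref{eq:ineq1}--\eqref{eq:ineq7} together with the ``largest candidate first'' rule) rather than a one-shot consequence of $q_k\to q^\star$, so it should be made explicit before the counting lemma is invoked.
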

\begin{proof}
The proof of this theorem is quite technical, and is therefore postponed to Section \ref{sec:proof_linear_convergence}. It relies on a few technical inequalities from the companion paper \cite{FGW2019ExchangeI}, but differs significantly to account for the discretization procedure. 

    Informally, it is built using the following arguments. First, appealing to the generic convergence result and the fact that only finitely many cells have an edge length larger than any fixed value $\delta>0$, we argue that after warming period of at most $k_0$ iterations, $q_k$ is close to $q^\star$, and no cells with an edge-length larger than a critical value can be active. Once that happens, the algorithm will only be able to refine cells close to the maximizers $X^\star$. This results in a multiscale refinement of local regions around the sought-for locations, see e.g. Figure \ref{fig:behavior1D_grad}.
\end{proof}
    
\begin{remark}
    We did not keep track of the constants in the above inequalities to simplify the reading. 
    While some of them are explicit, others, like the time $k_0$ to reach a linear convergence rate are not.    
\end{remark}

\begin{remark} We can replace $\mu_k$, the solution of $(\calP(\calV_k))$, by the solution $\tilde \mu_k$ of $\calP(\vertx(\Omega_k^\star))$ defined as
    \begin{equation}
    \tilde \mu_k \eqdef \inf_{\mu \in \calM(\vertx(\Omega_k^\star))} \|\mu\|_{\calM} + f(A\mu).
\end{equation}
and still have $J(\tilde \mu_k) - J(\mu^\star) \lesssim 2^{-2J}$.
The interest of this alternative problem is that the cardinality of $\vertx(\Omega_k^\star)$ is significantly smaller than that of $\calV_k$, helping to reduce the numerical complexity.
\end{remark}

\begin{remark}
	Our algorithm relies on dyadic subdivision of cells. We therefore cannot expect the algorithm to converge faster than linearly. In that regard, Theorem \ref{th:linConv} is optimal.
\end{remark}

\section{Numerical experiments} \label{sec:numerical}

In this section, we aim at illustrating our main findings through some simple numerical experiments.
We consider problems of sparse source recovery problem with filtered measurements. 
That is, given a ground truth $\bar \mu$, we set $y = A\bar\mu$ and let $f(q) = \frac{1}{2}\norm{q-y}_2^2$. 
This yields
\begin{equation*}
    f^*(q')\eqdef \sup_{q\in \R^M} \langle q,q'\rangle - \frac{1}{2}\|q-y\|_2^2 = \frac{1}{2}\|q'\|_2^2 + \langle q',y\rangle.
\end{equation*}
We consider Gaussian measurements functions of the form 
\begin{equation*}
  a_m(x) = \frac{1}{2\pi \sigma} \exp\left( \frac{- \|x - z_m\|_2^2 }{2\sigma^2}\right)
\end{equation*}
for some value $\sigma>0$. 
To properly define our selection procedures in \eqref{eq:defin:kappa_2}, we need an upper-bound on the second order derivatives. 
\begin{proposition}\label{prop:k2_gaussian_2D}
Define
\[\kappa_{2,m}(\omega) =\frac{a_m(\dist(z_m,\omega))}{\sigma^4} \max \left(\sigma^2 , (\dist(z_m,\omega) + \sqrt{D}|\omega|)^2\right)\] then
$	\kappa_{2,m}(\omega) \ge \sup_{x\in \omega} \Vert a_m''(x)\Vert_{2\to 2} 
.$
We can choose $\kappa_2(q_k,\omega) =\sum_{m=1}^M |q_k|[m] \kappa_{2,m}(\omega)$, in Proposition~\ref{eq:defin:kappa_2} to define the second order candidates $\Omega_k^\star$.
\end{proposition}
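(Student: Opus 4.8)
The plan is to reduce everything to an explicit computation of the Hessian of a single Gaussian, followed by two elementary one-variable estimates. First I would differentiate twice. Writing $r = x - z_m$ and $a_m(x) = \frac{1}{2\pi\sigma}\exp(-\norm{r}_2^2/(2\sigma^2))$, a direct computation gives
\begin{equation*}
 a_m''(x) = \frac{a_m(x)}{\sigma^2}\left(\frac{r r^\top}{\sigma^2} - \Id\right).
\end{equation*}
The matrix in parentheses is a rank-one perturbation of $-\Id$; its eigenvalues are $\norm{r}_2^2/\sigma^2 - 1$ (with eigenvector $r$) and $-1$ (with multiplicity $D-1$). Hence its spectral norm is $\max(|\norm{r}_2^2/\sigma^2 - 1|, 1)$, and
\begin{equation*}
 \norm{a_m''(x)}_{2\to 2} = \frac{a_m(x)}{\sigma^2}\max\left(\left|\frac{\norm{x-z_m}_2^2}{\sigma^2} - 1\right|, 1\right).
\end{equation*}

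Then I would bound the supremum over $x\in\omega$ by treating the two factors separately. Let $d_0 = \dist(z_m,\omega)$. Since $a_m$ depends only on $\norm{x-z_m}_2$ and is decreasing in that distance, for every $x\in\omega$ we have $a_m(x)\leq a_m(d_0)$, which explains the prefactor $a_m(\dist(z_m,\omega))$. For the eigenvalue factor, set $t = \norm{x-z_m}_2^2/\sigma^2$; a two-case check shows $\max(|t-1|,1)\leq\max(1,t)$, since if $t\geq 1$ then $|t-1| = t-1\leq t$, while if $t<1$ then $|t-1|\leq 1$. Finally, for $x\in\omega$ the triangle inequality together with $\diam(\omega) = \sqrt{D}|\omega|$ gives $\norm{x-z_m}_2\leq d_0 + \sqrt{D}|\omega|$, so that $t\leq (d_0+\sqrt{D}|\omega|)^2/\sigma^2$. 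Combining, the eigenvalue factor is at most $\max(1, (d_0+\sqrt{D}|\omega|)^2/\sigma^2)$, and multiplying by $a_m(d_0)/\sigma^2$ reproduces exactly $\kappa_{2,m}(\omega)$ after absorbing $1/\sigma^2$ into the $\max$, i.e. $\frac{a_m(d_0)}{\sigma^2}\max(1,\tfrac{d_1^2}{\sigma^2}) = \frac{a_m(d_0)}{\sigma^4}\max(\sigma^2,d_1^2)$ with $d_1 = d_0+\sqrt{D}|\omega|$. Summing against $|q_k[m]|$ then yields the claimed $\kappa_2(q_k,\omega)$ via \eqref{eq:defin:kappa_2}.

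The computations here are all routine; the only thing to be careful about is that the product of the two factors is \emph{not} maximized at a single point of the cell, so one must bound each factor by its own worst case over $\omega$ separately (the exponential at the near corner $d_0$, the polynomial factor at the far corner $d_0 + \sqrt{D}|\omega|$). This decoupling is what makes the bound slightly loose but produces the clean closed form in the statement. The secondary point worth spelling out is the eigenstructure of $\frac{rr^\top}{\sigma^2} - \Id$, since that is precisely what generates the $\max(\,\cdot\,,1)$ in the spectral norm and hence the $\max(\sigma^2,\,\cdot\,)$ appearing in $\kappa_{2,m}$.
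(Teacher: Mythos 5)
Your proof is correct and follows essentially the same route as the paper's: compute the Gaussian Hessian, bound its spectral norm by $\frac{a_m(x)}{\sigma^4}\max(\sigma^2,\|x-z_m\|_2^2)$, and then decouple the two factors over the cell via $a_m(x)\le a_m(\dist(z_m,\omega))$ and $\|x-z_m\|_2\le \dist(z_m,\omega)+\sqrt{D}|\omega|$. The only cosmetic difference is that you diagonalize the rank-one perturbation $\frac{rr^\top}{\sigma^2}-\Id$ exactly (note the stated equality for the spectral norm implicitly assumes $D\ge 2$, though the inequality you actually use holds in all dimensions), whereas the paper bounds the quadratic form $\bigl|\langle a_m''(x)u,u\rangle\bigr|$ directly; both yield the same intermediate estimate.
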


\subsection{Implementation details}
We implement our algorithm in Python using the numpy package. To solve the discretized dual \eqref{eq:discdual}, we rely on the SCS solver of the CVXPY package \cite{diamond2016cvxpy,agrawal2018rewriting}. The selection procedures are defined and implemented as described in the main text.
To assess the convergence rates, we compute the exact solutions of the primal problem by running a fixed step gradient descent in the parameter space, see equation~\eqref{eq:parameterized_problem}, initialized in the 'ground truth measures' we specify. This is sound, since the true solution lies close to them (see e.g. \cite{poon2018support}).

\subsection{1D-experiments}

\paragraph{The problem} We set $y = A\bar \mu$ with $\bar \mu \eqdef 8 \delta_{1/3} - 9\delta_{2/3}$. We choose the Dirac mass locations to lie at $1/3$ and $2/3$, since these points are the hardest to reach with dyadic partitions. The sampling locations and $\sigma$-parameter are set to $z_m \eqdef m/M$ and $\sigma = 2/M$, with $M=20$. 


\paragraph{Second-order upper bound} 
The behavior of the second order selection process algorithm of  Definition~\ref{def:second_order_selection} is displayed in Table \ref{tab:behavior1D_nograd} and Figure \ref{fig:behavior1D_nograd}.
As can be seen in the figure, the algorithm starts by a burn-in period of $4$ iterations. 
This transient behavior explains why the linear convergence rate only occurs after a finite number of iterations. 
Then, only cells in a neighborhood of $\{1/3,2/3\}$ are refined. 
The Table \ref{tab:behavior1D} clearly indicates that the distance $\distH(\calV_k,X^\star)$ decays exponentially fast, illustrating Theorem \ref{th:linConv} and the linear convergence rate.
Observe that less than 300 vertices are enough to obtain a precision $10^{-6}$, while a uniform refinement would require $10^6$ vertices. This illustrates the huge computational/memory advantage of this adaptive method. 

\begin{figure}[!t]
    \centering
    \begin{subfigure}[b]{0.3\textwidth}
      \includegraphics[width=\textwidth]{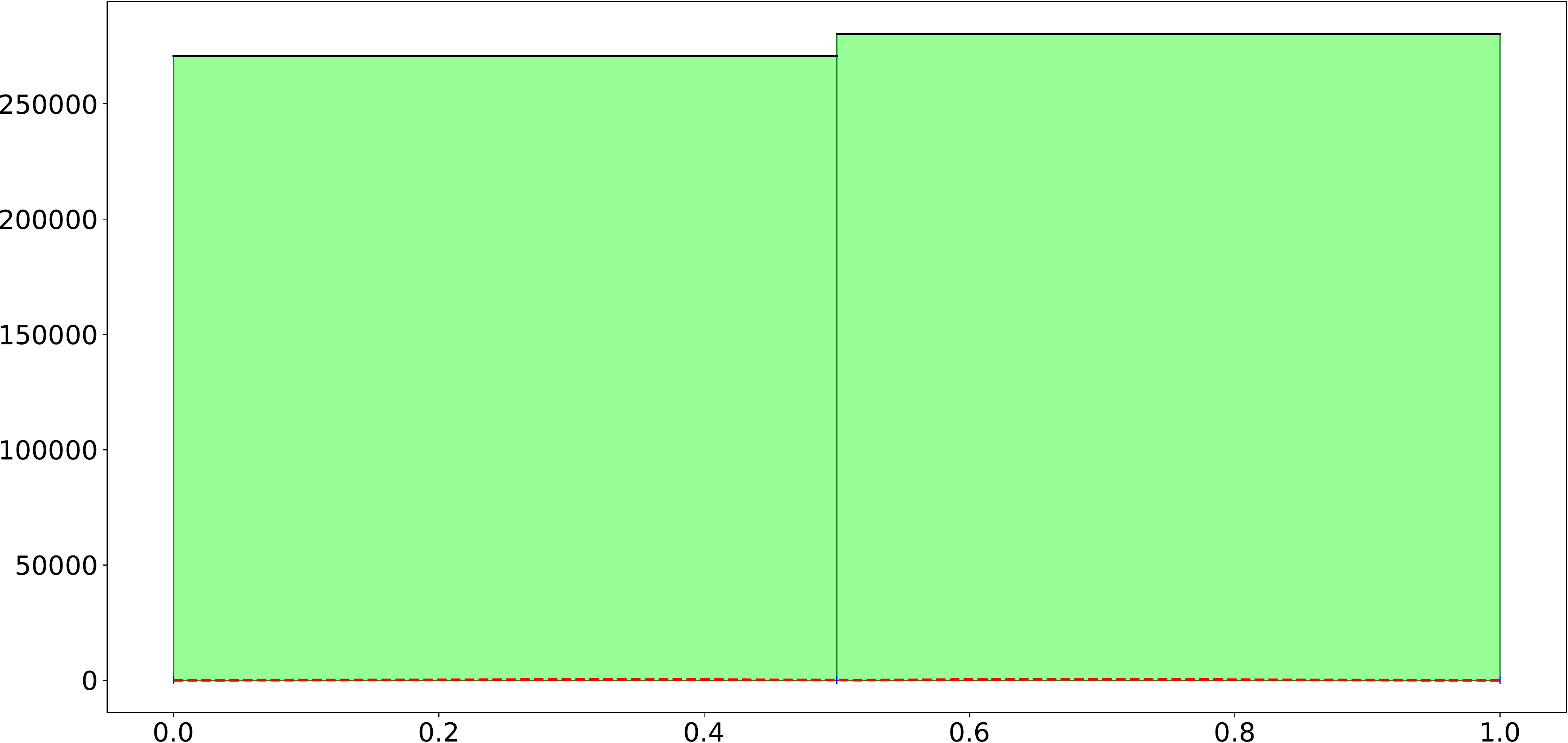}
      \caption{$|\calV_1|=3$}
    \end{subfigure}
    \begin{subfigure}[b]{0.3\textwidth}
      \includegraphics[width=\textwidth]{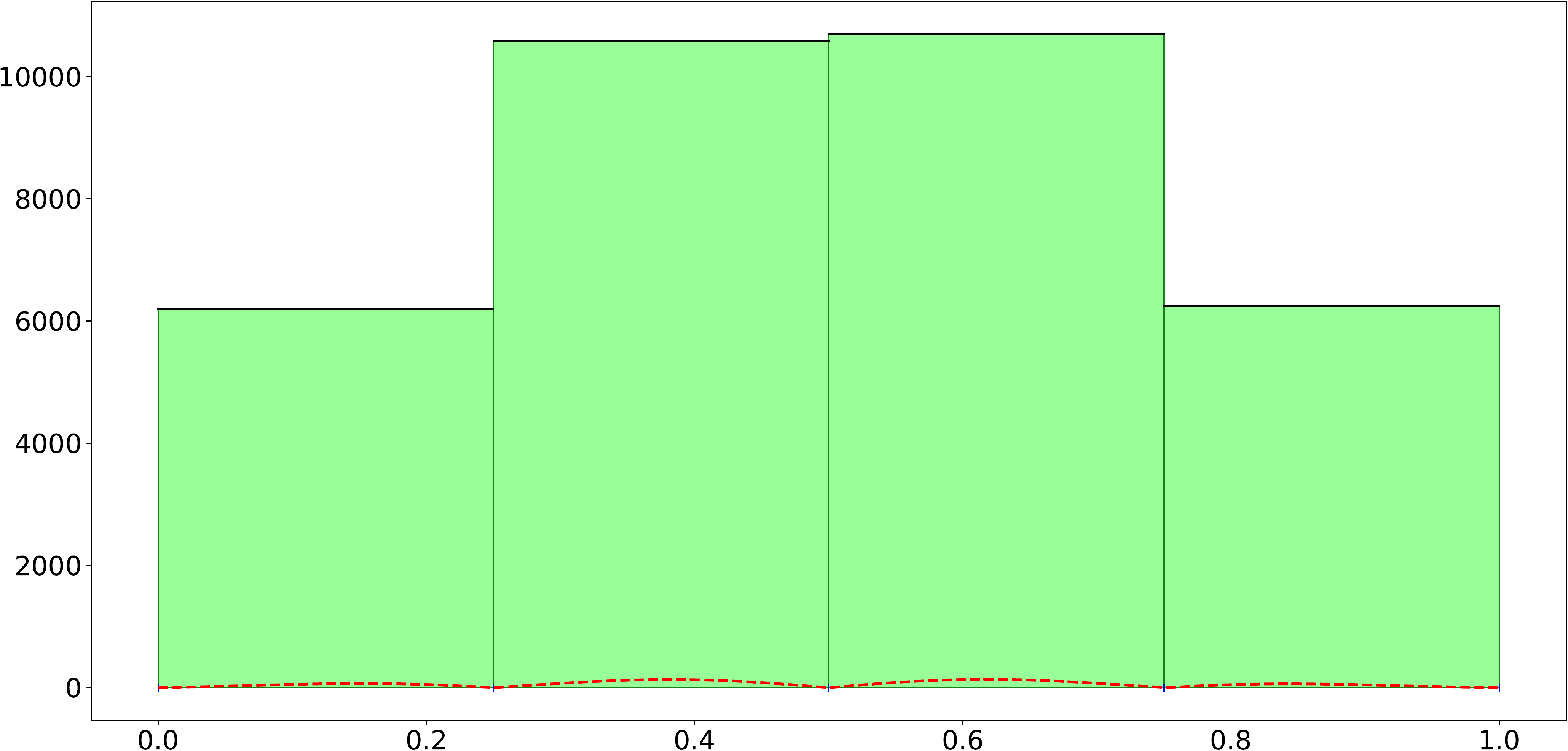}
      \caption{$|\calV_2|=5$}
    \end{subfigure}
    \begin{subfigure}[b]{0.3\textwidth}
      \includegraphics[width=\textwidth]{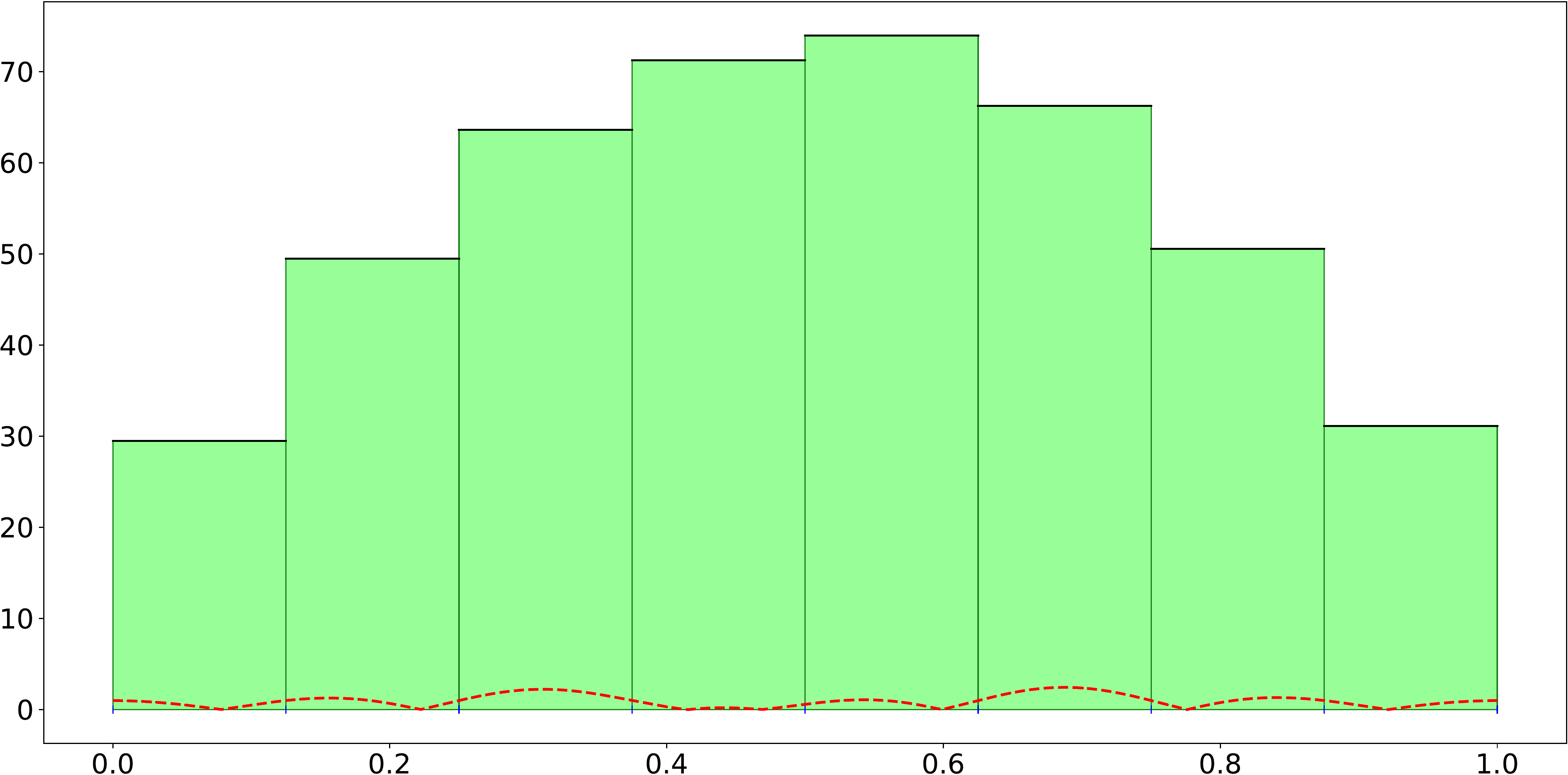}
      \caption{$|\calV_3|=9$}
    \end{subfigure}
    \begin{subfigure}[b]{0.3\textwidth}
      \includegraphics[width=\textwidth]{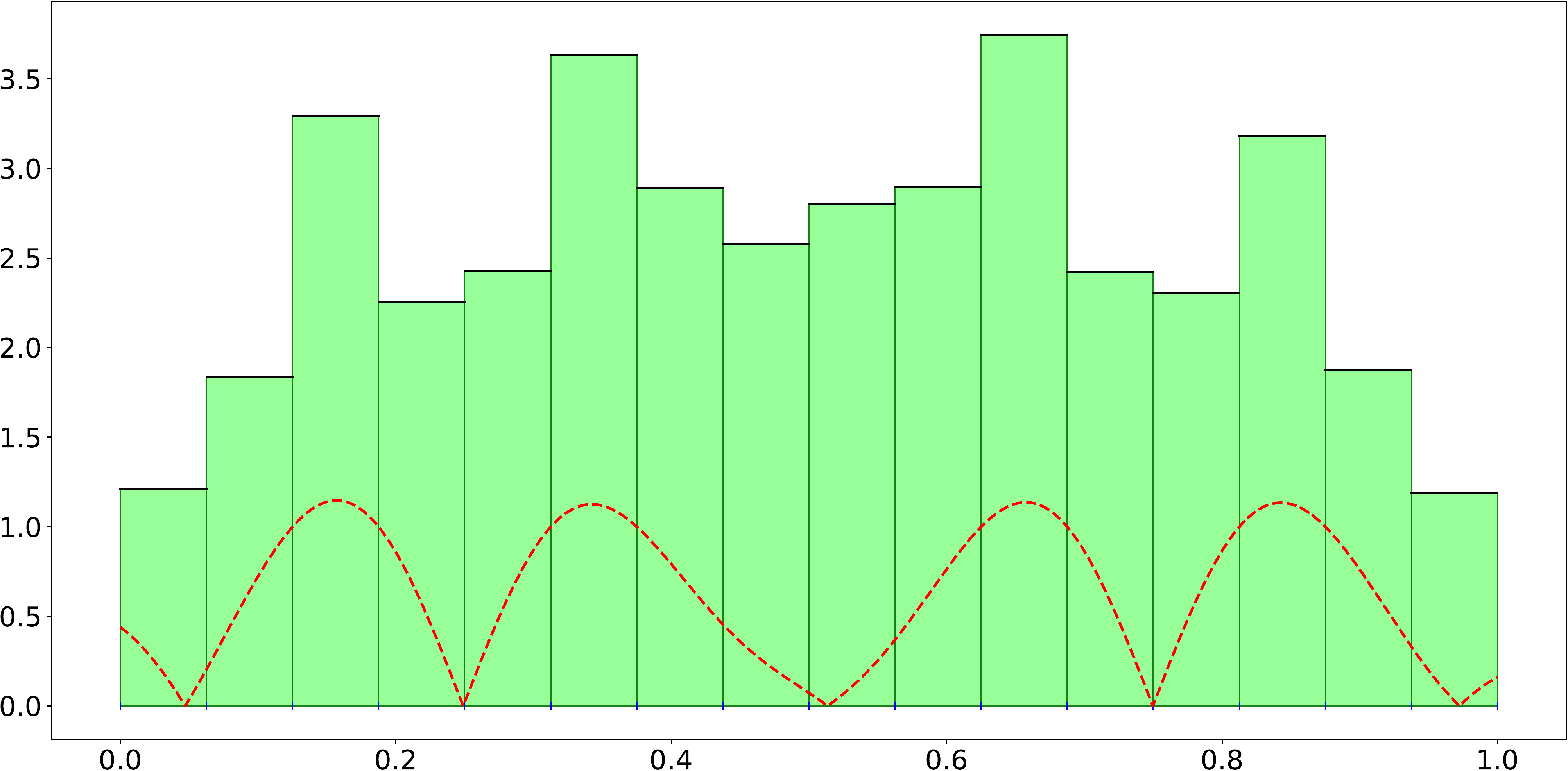}
      \caption{$|\calV_4|=17$}
    \end{subfigure}
    \begin{subfigure}[b]{0.3\textwidth}
      \includegraphics[width=\textwidth]{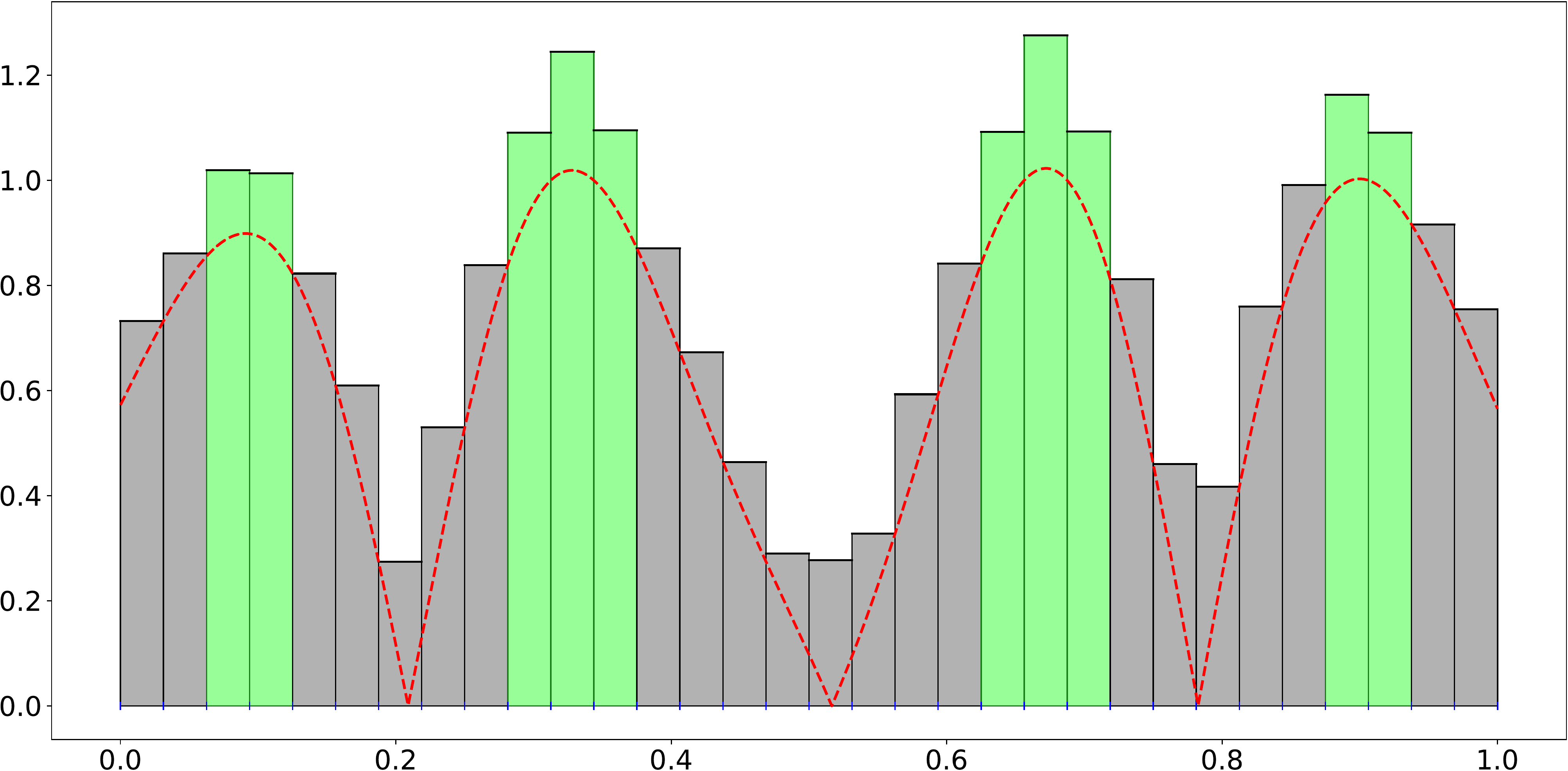}
      \caption{$|\calV_5|=33$}
    \end{subfigure}
    \begin{subfigure}[b]{0.3\textwidth}
      \includegraphics[width=\textwidth]{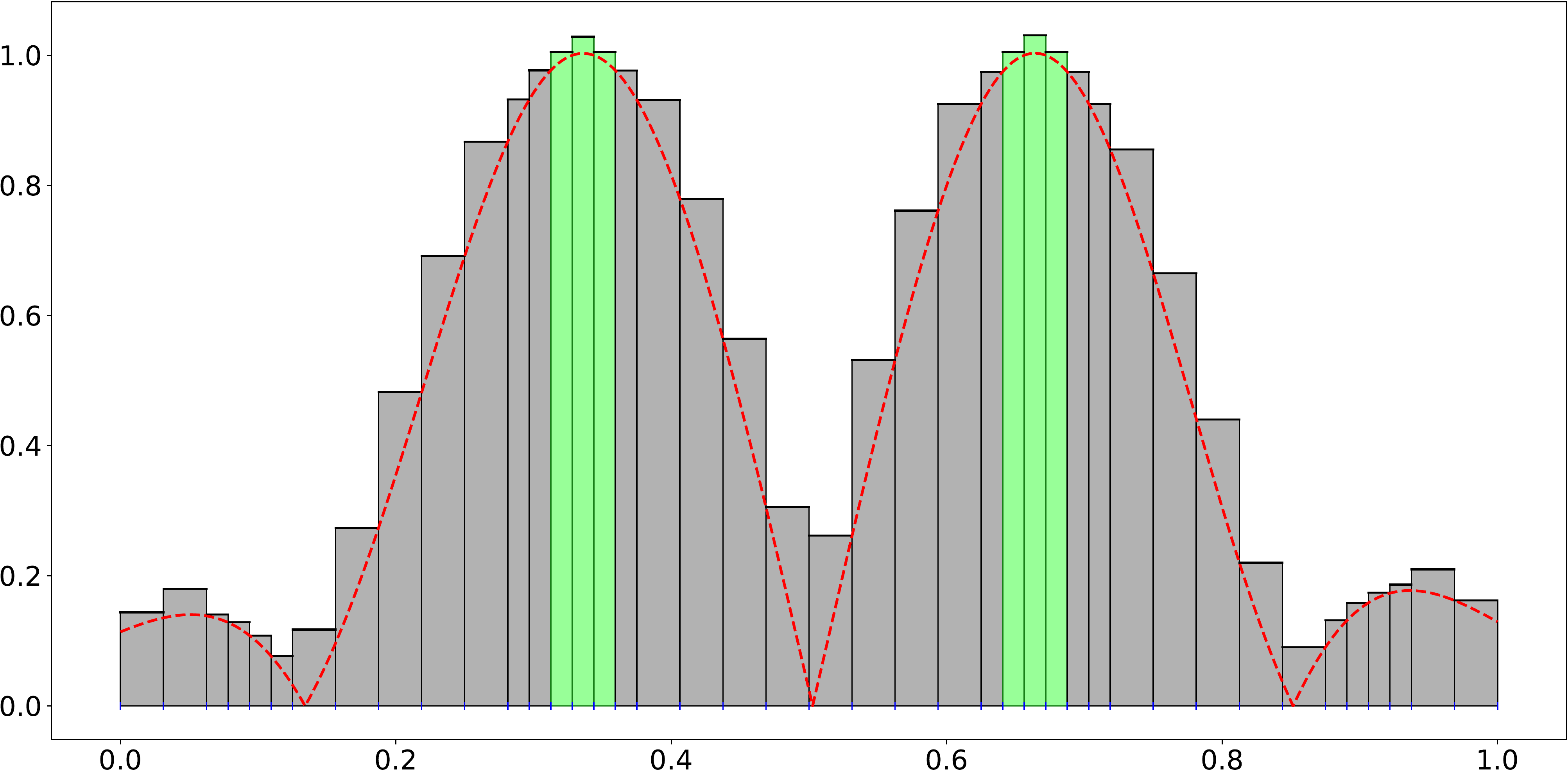}
      \caption{$|\calV_6|=43$\label{fig:behavior1D_nograd6}}
    \end{subfigure}
    \begin{subfigure}[b]{0.3\textwidth}
      \includegraphics[width=\textwidth]{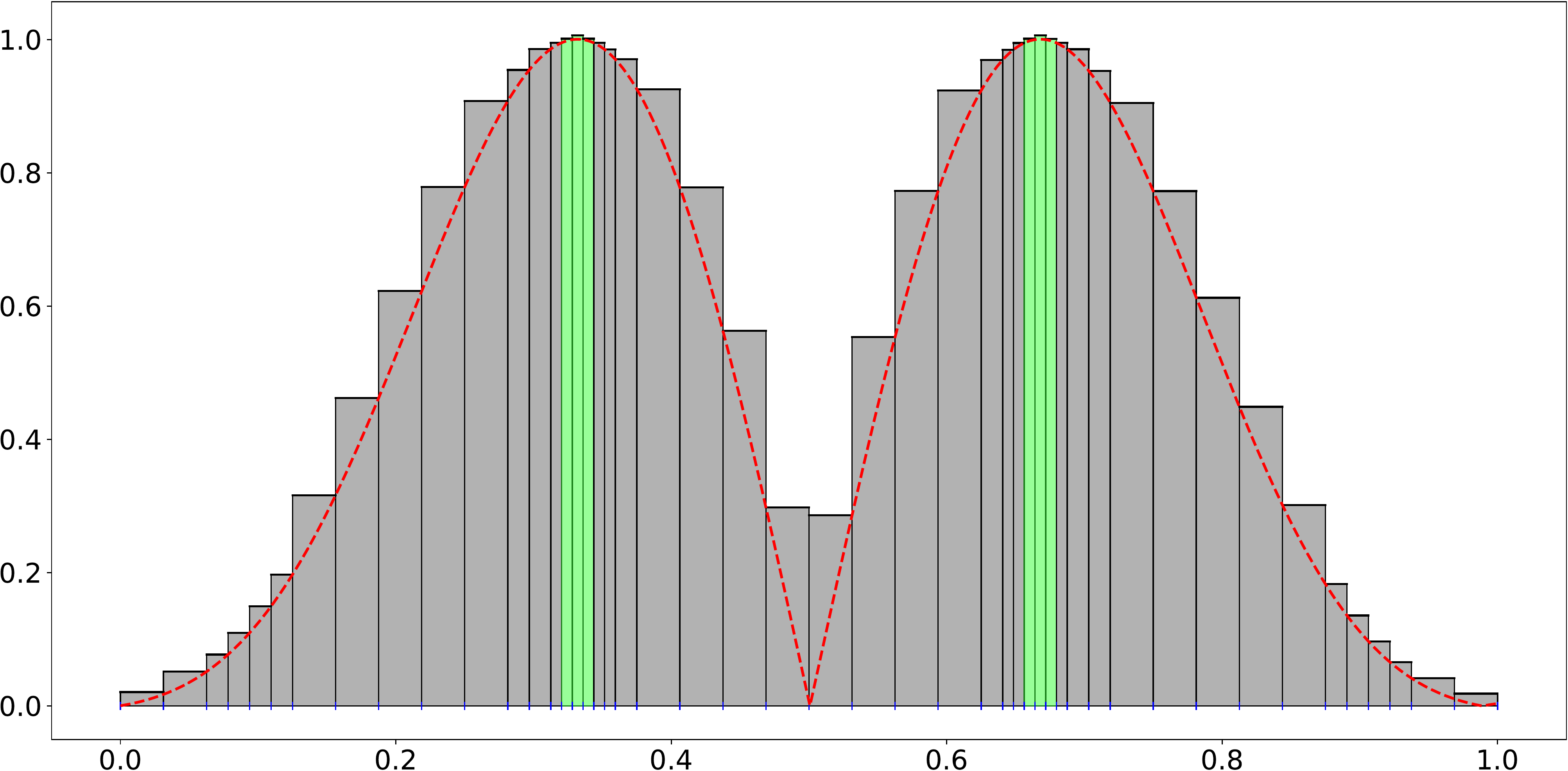}
      \caption{$|\calV_7|=49$}
    \end{subfigure}
    \begin{subfigure}[b]{0.3\textwidth}
      \includegraphics[width=\textwidth]{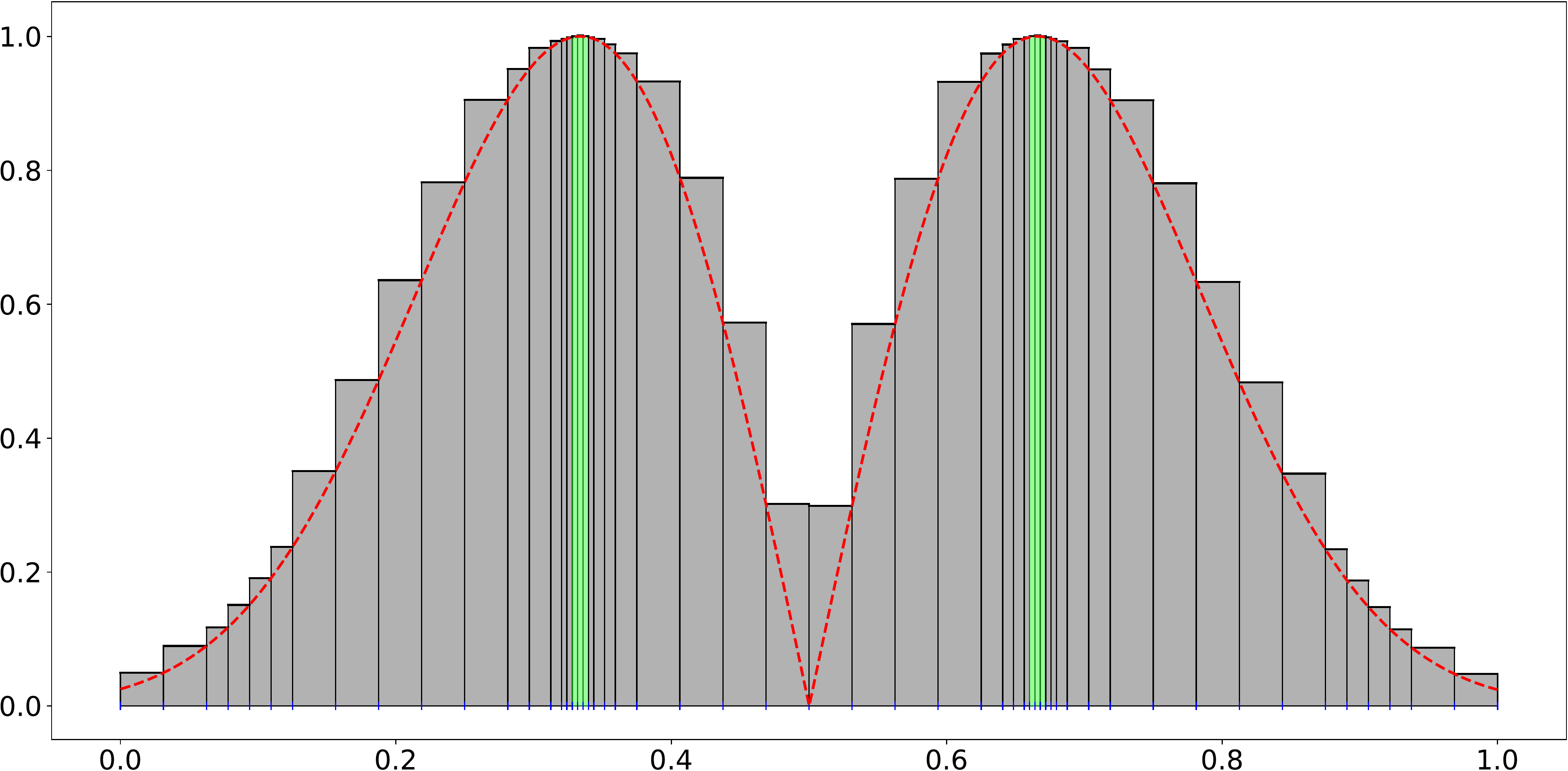}
      \caption{$|\calV_8|=55$}
    \end{subfigure}
    \begin{subfigure}[b]{0.3\textwidth}
      \includegraphics[width=\textwidth]{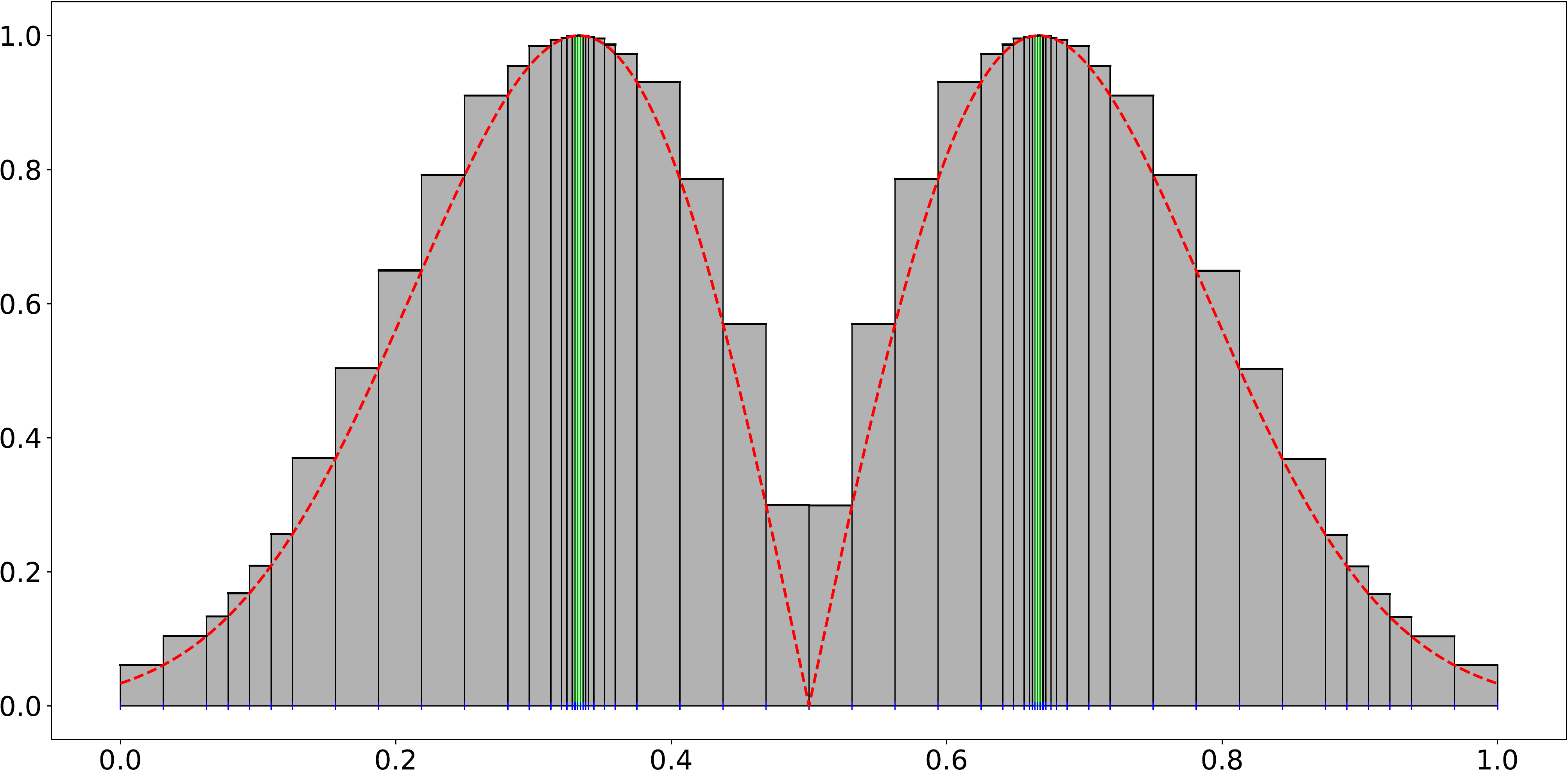}
      \caption{$|\calV_9|=61$}
    \end{subfigure}
    \begin{subfigure}[b]{0.3\textwidth}
      \includegraphics[width=\textwidth]{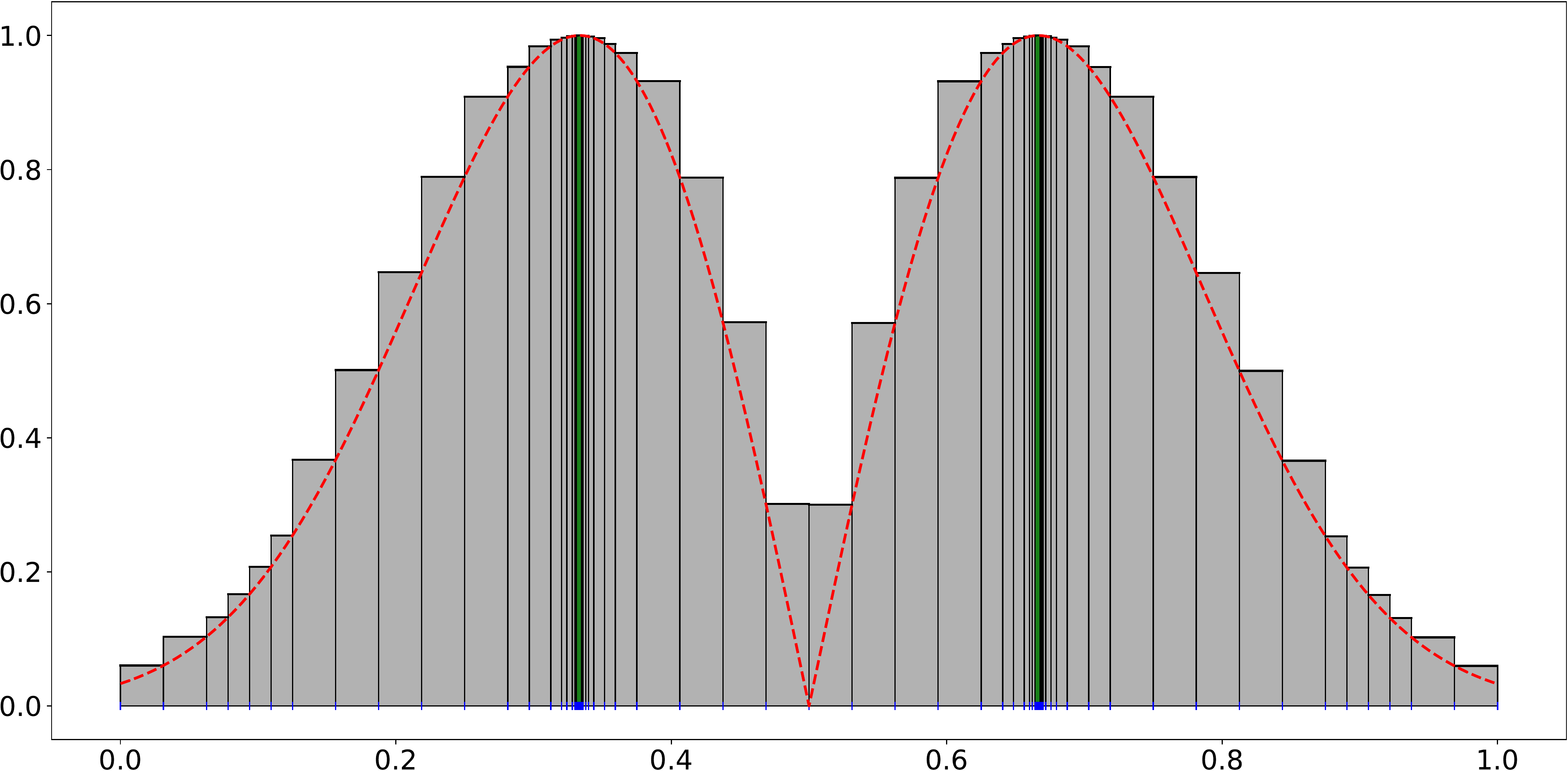}
      \caption{$|\calV_{10}|=67$}
    \end{subfigure}
    \begin{subfigure}[b]{0.3\textwidth}
      \includegraphics[width=\textwidth]{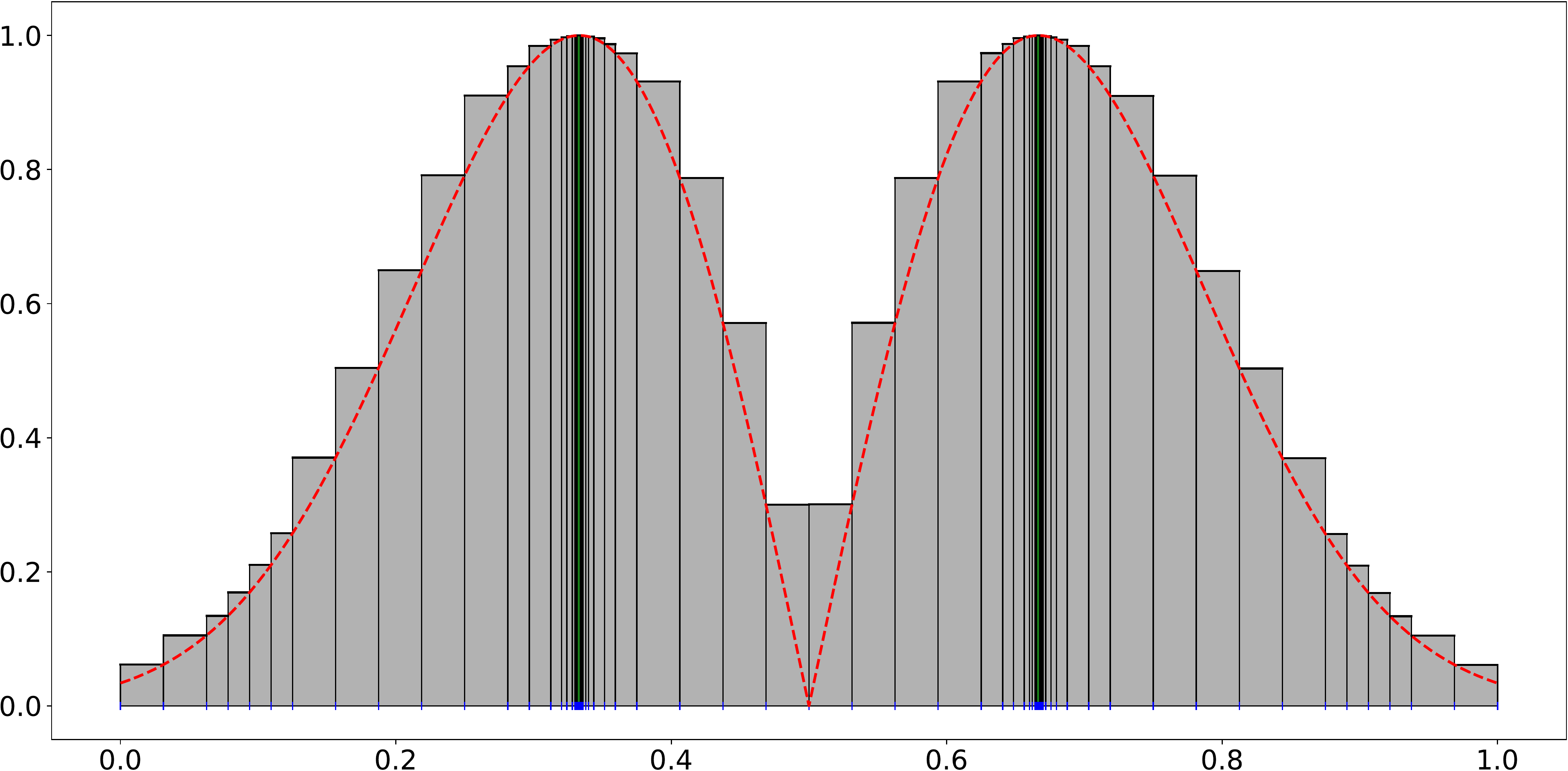}
      \caption{$|\calV_{11}|=73$}
    \end{subfigure}
    \begin{subfigure}[b]{0.3\textwidth}
      \includegraphics[width=\textwidth]{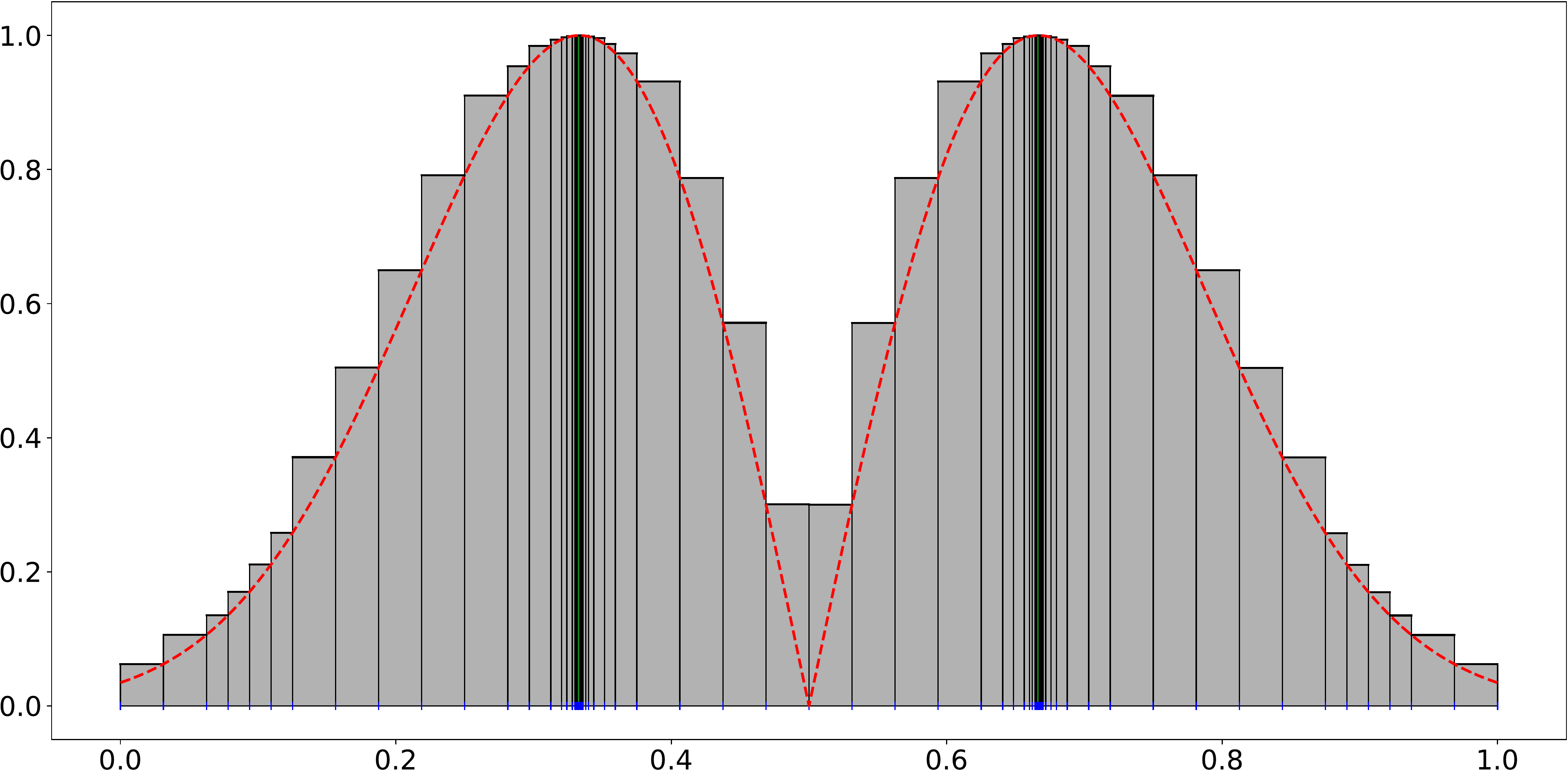}
      \caption{$|\calV_{12}|=79$}
    \end{subfigure}
    \caption{The behavior of the adaptive refinement algorithm with a second order selection process, on a 1D sparse recovery problem. 
    The set $\Omega_k^\star$ is displayed in green, the function $|A^*q_k|$ is displayed in dashed red, the upper-bound $\Uk$ is the piecewise-constant function. Observe that it always dominates $|A^*q_k|$.
    The algorithm starts with a burn-in period of 4 iterations. There, it refines all cells uniformly since the upper-bound is highly inaccurate. After a while, only the cells around the locations $X^\star$ get refined in a multiscale fashion.\label{fig:behavior1D_nograd}}
  \end{figure}

\begin{figure}[!t]
    \centering
    \begin{subfigure}[b]{0.3\textwidth}
      \includegraphics[width=\textwidth]{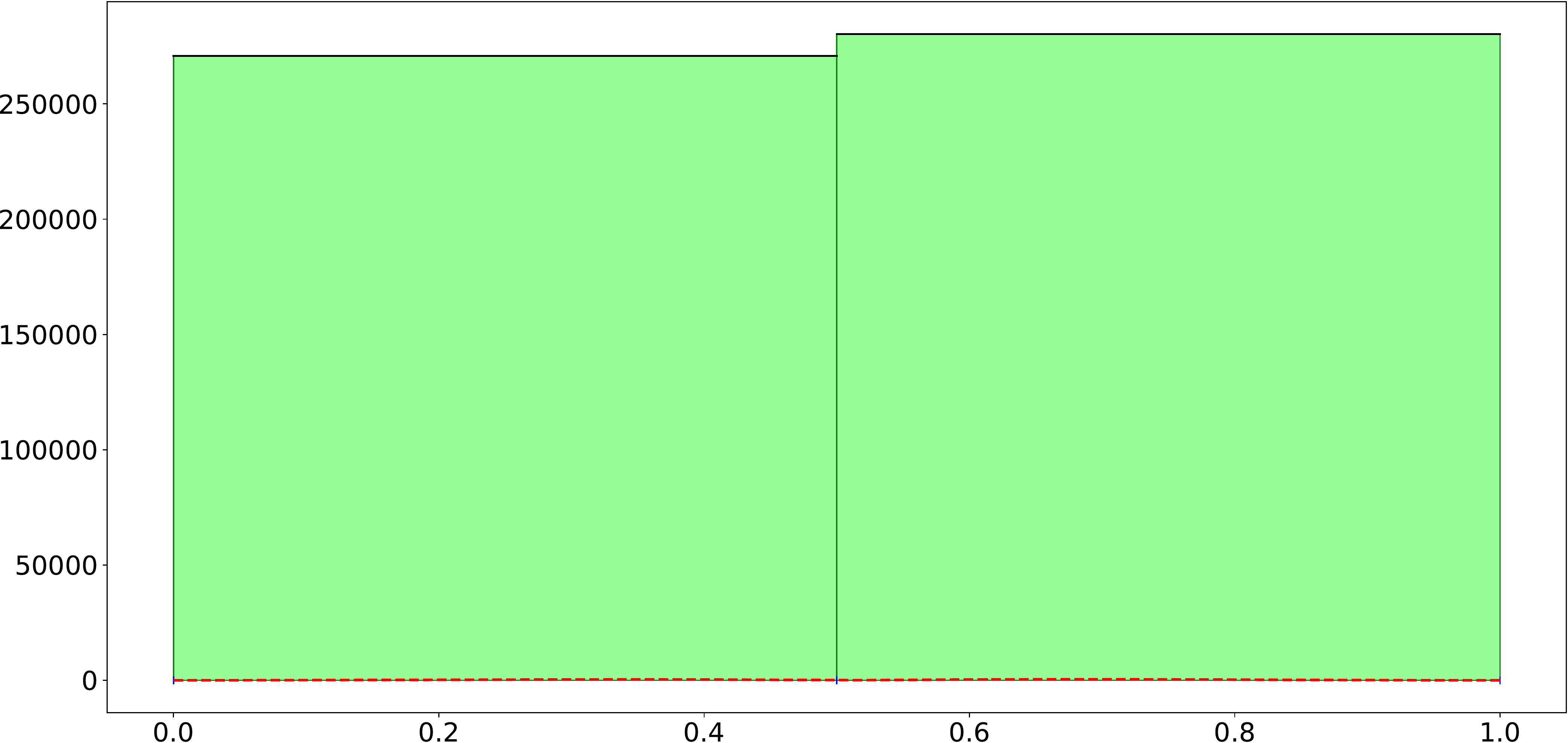}
      \caption{$|\calV_1|=3$}
    \end{subfigure}
    \begin{subfigure}[b]{0.3\textwidth}
      \includegraphics[width=\textwidth]{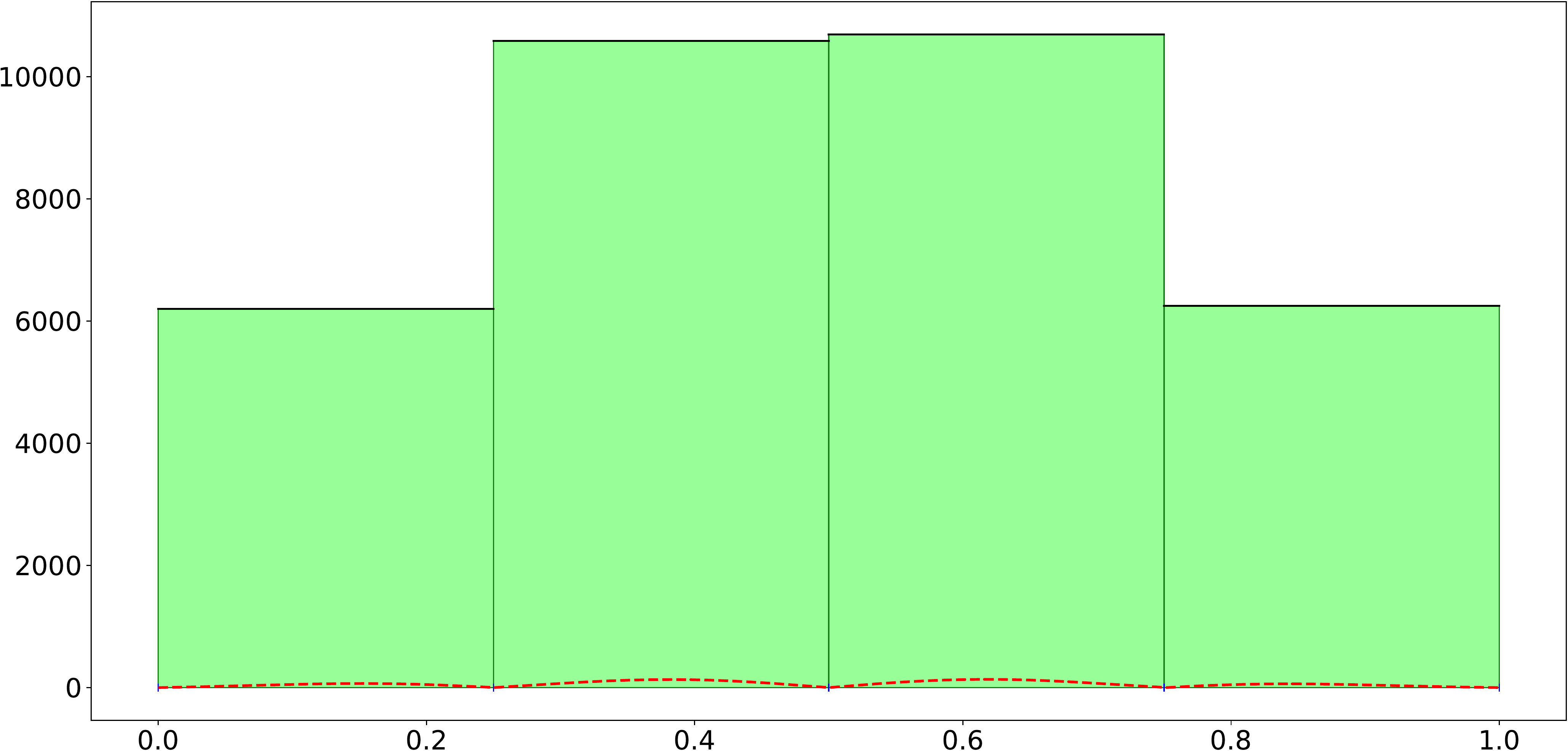}
      \caption{$|\calV_2|=5$}
    \end{subfigure}
    \begin{subfigure}[b]{0.3\textwidth}
      \includegraphics[width=\textwidth]{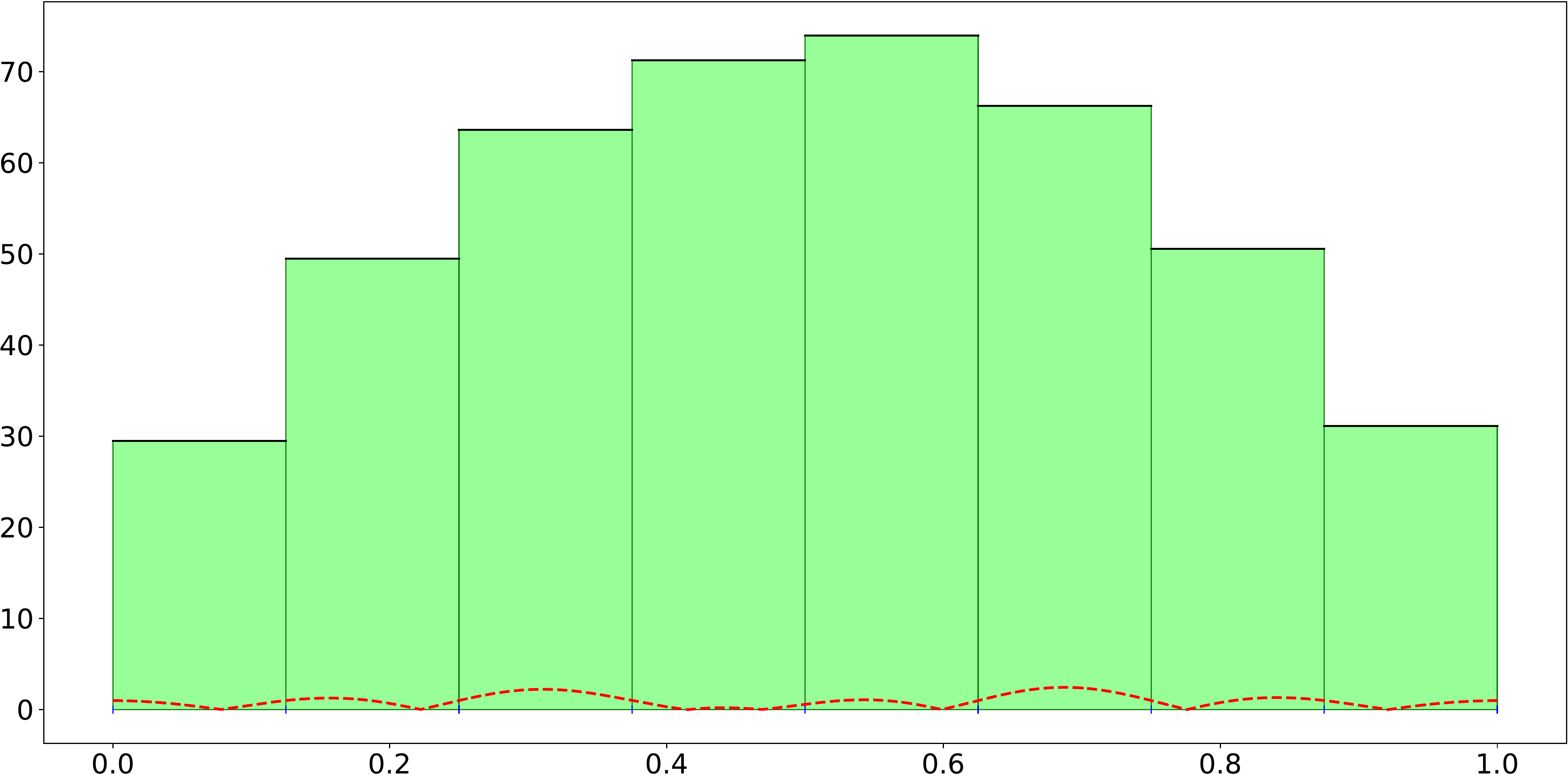}
      \caption{$|\calV_3|=9$}
    \end{subfigure}
    \begin{subfigure}[b]{0.3\textwidth}
      \includegraphics[width=\textwidth]{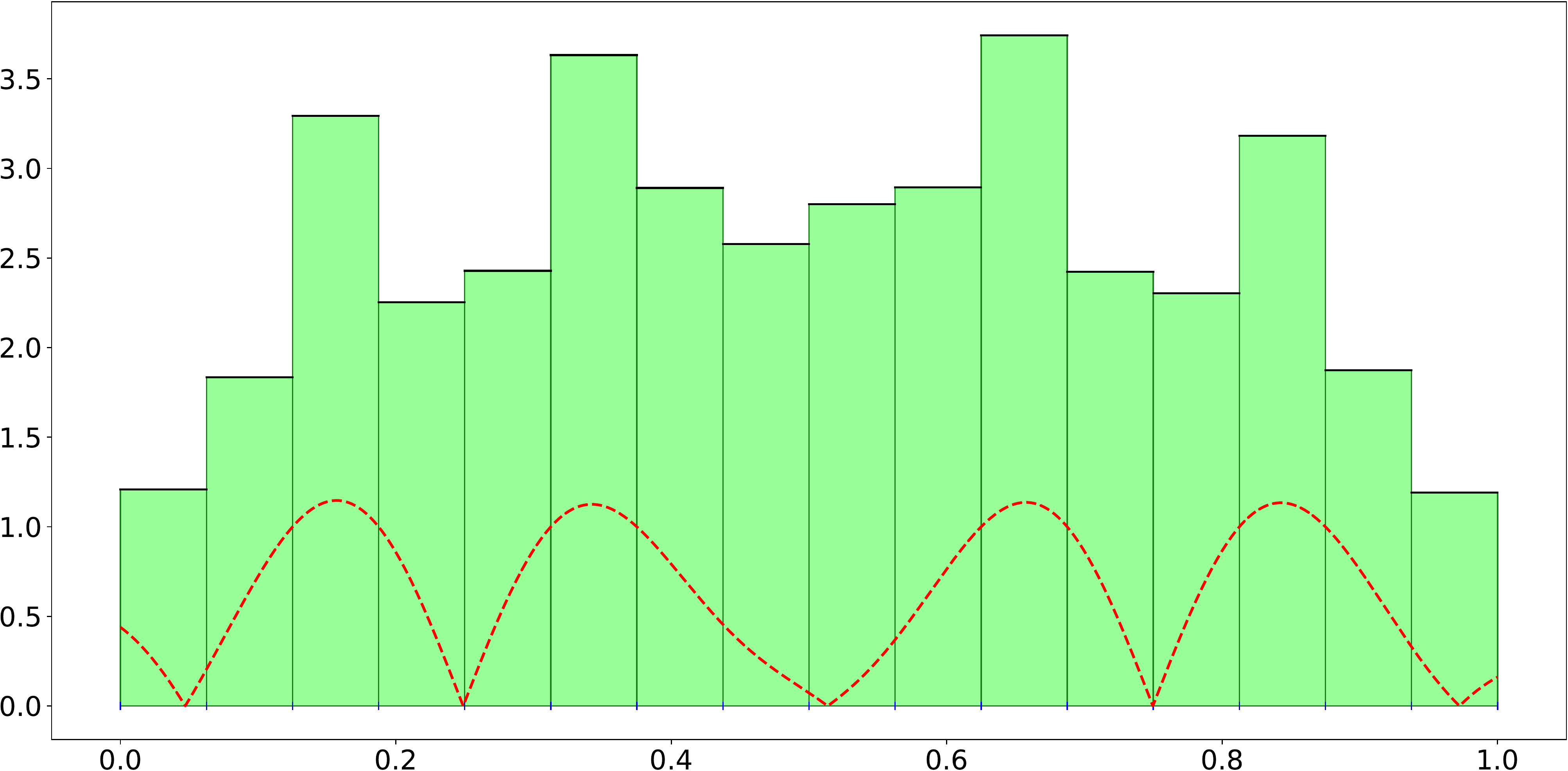}
      \caption{$|\calV_4|=17$}
    \end{subfigure}
    \begin{subfigure}[b]{0.3\textwidth}
      \includegraphics[width=\textwidth]{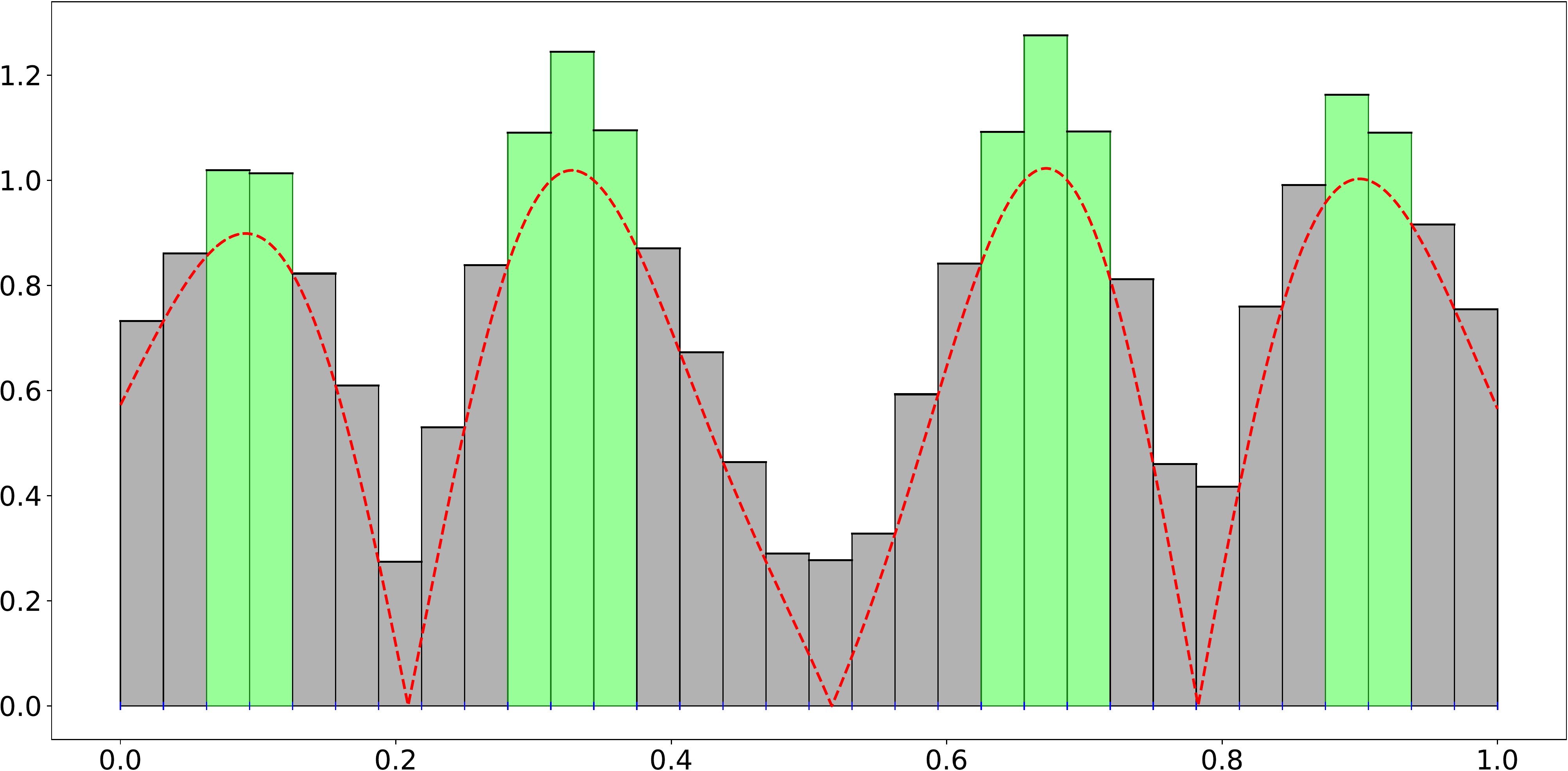}
      \caption{$|\calV_5|=33$}
    \end{subfigure}
    \begin{subfigure}[b]{0.3\textwidth}
      \includegraphics[width=\textwidth]{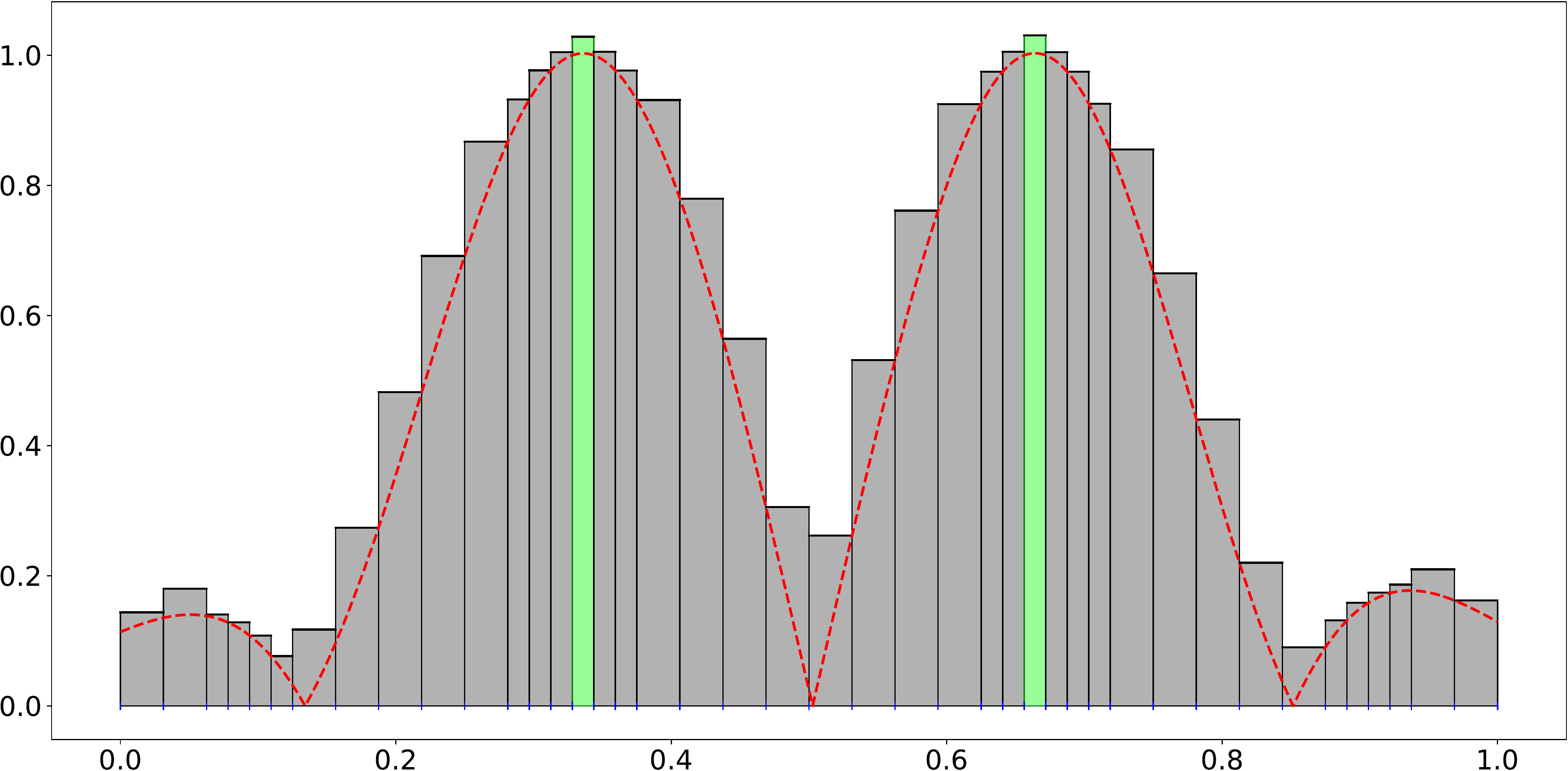}
      \caption{$|\calV_6|=43$\label{fig:behavior1D_grad6}}
    \end{subfigure}
    \begin{subfigure}[b]{0.3\textwidth}
      \includegraphics[width=\textwidth]{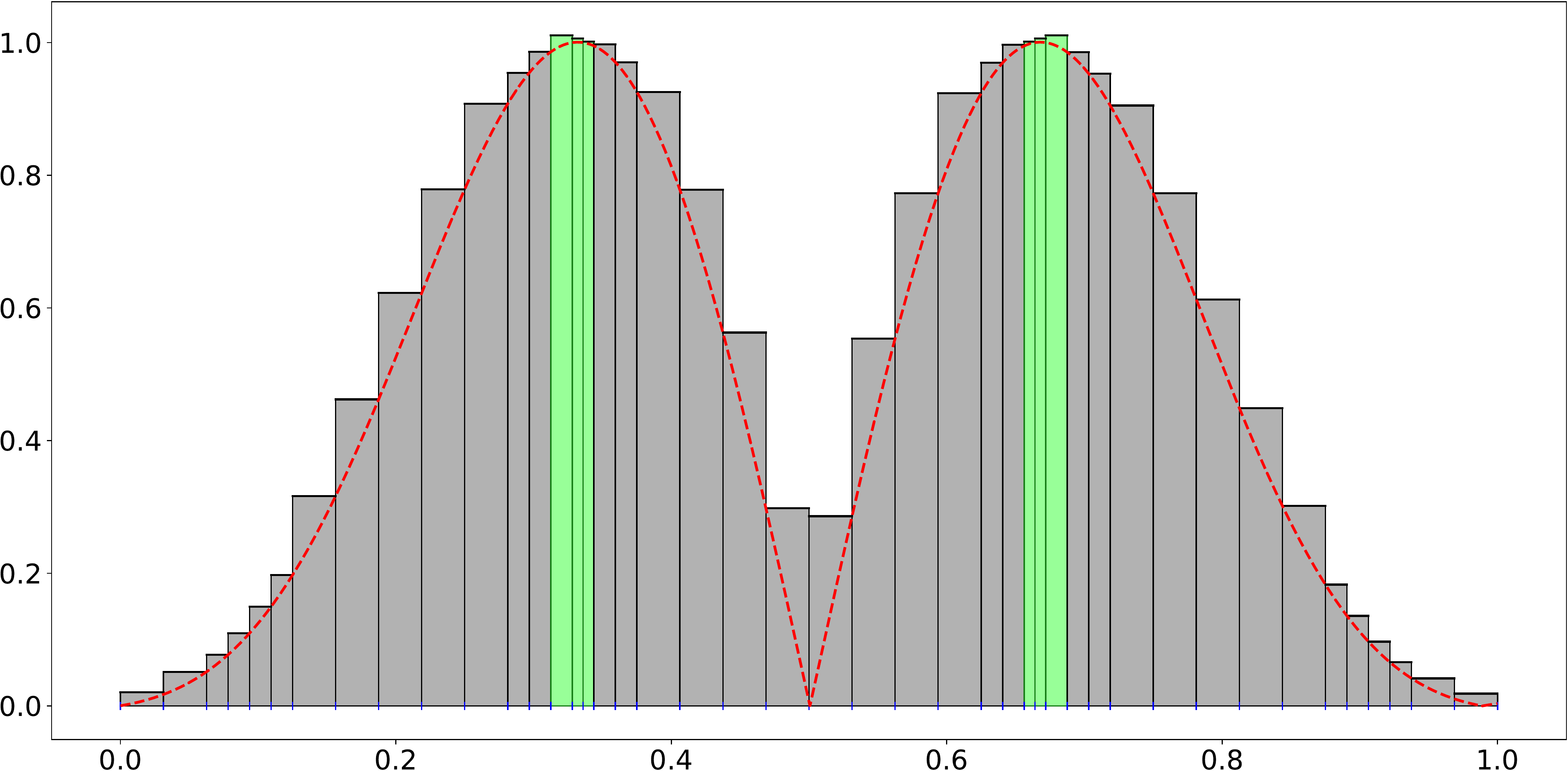}
      \caption{$|\calV_7|=45$\label{fig:behavior1D_grad7}}
    \end{subfigure}
    \begin{subfigure}[b]{0.3\textwidth}
      \includegraphics[width=\textwidth]{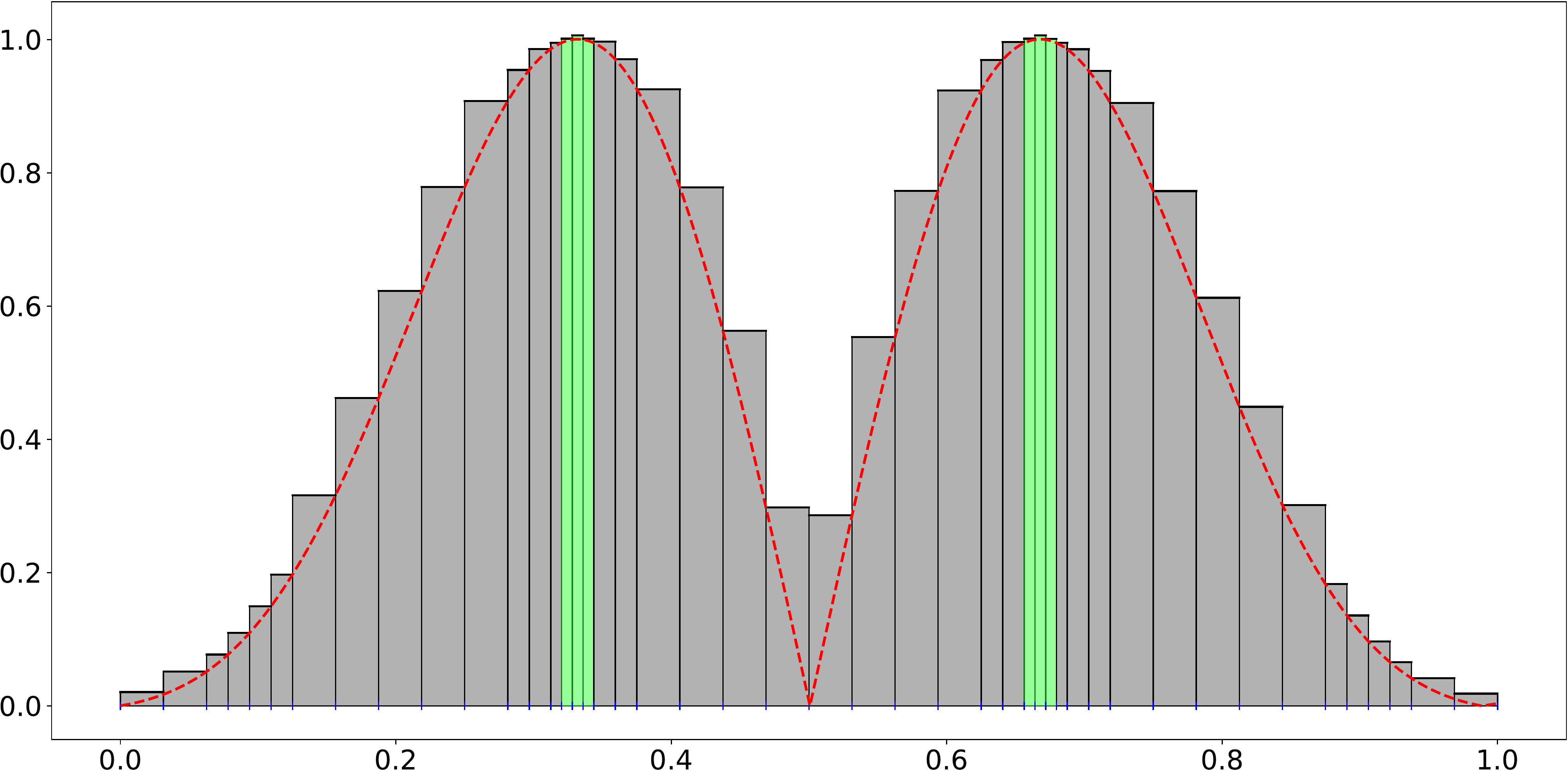}
      \caption{$|\calV_8|=47$}
    \end{subfigure}
    \begin{subfigure}[b]{0.3\textwidth}
      \includegraphics[width=\textwidth]{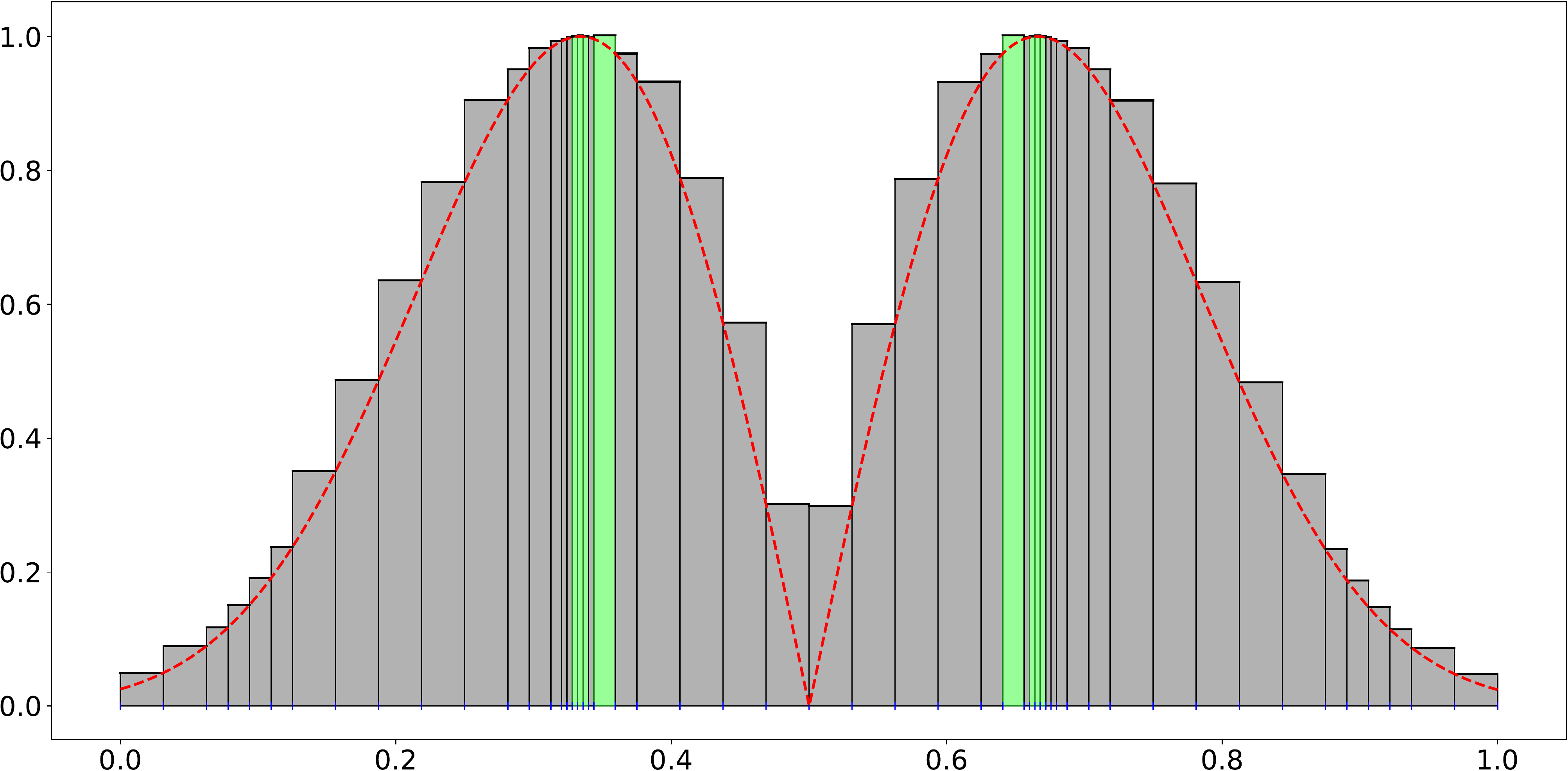}
      \caption{$|\calV_9|=53$}
    \end{subfigure}
    \begin{subfigure}[b]{0.3\textwidth}
      \includegraphics[width=\textwidth]{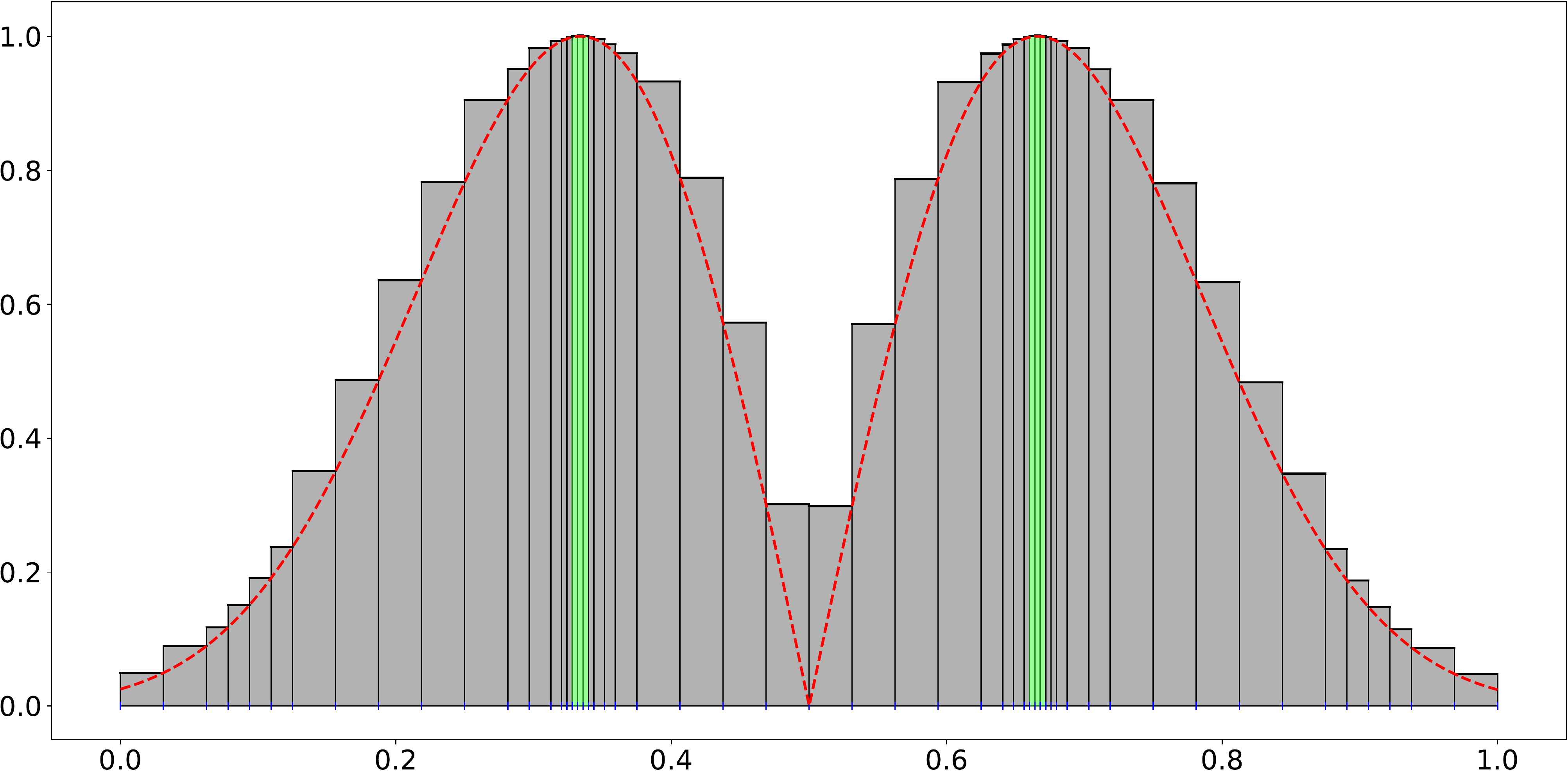}
      \caption{$|\calV_{10}|=55$}
    \end{subfigure}
    \begin{subfigure}[b]{0.3\textwidth}
      \includegraphics[width=\textwidth]{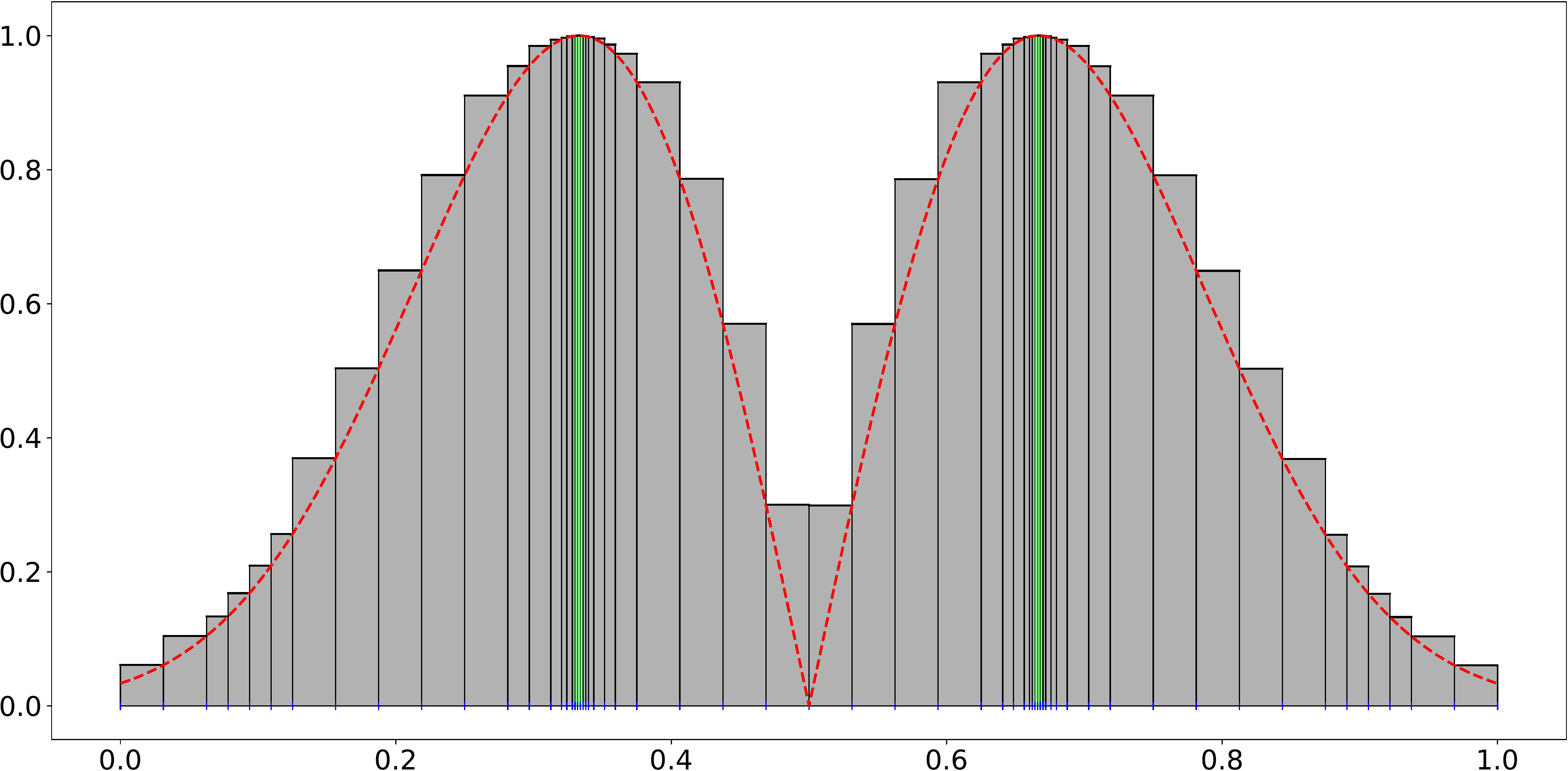}
      \caption{$|\calV_{11}|=61$}
    \end{subfigure}
    \begin{subfigure}[b]{0.3\textwidth}
      \includegraphics[width=\textwidth]{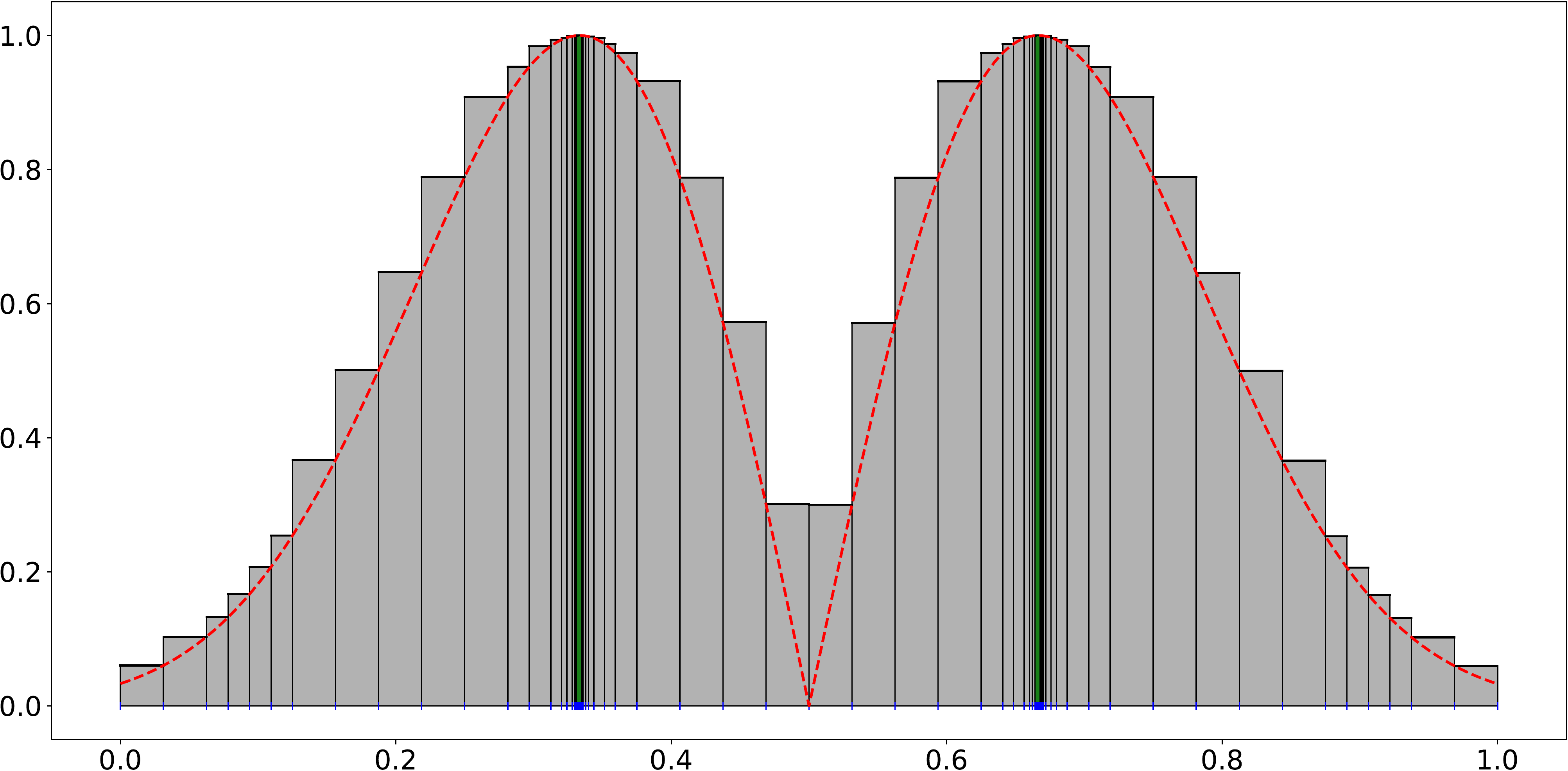}
      \caption{$|\calV_{12}|=67$}
    \end{subfigure}
    \caption{The adaptive refinement algorithm with a second order selection process and first order gradient. The setting is the same as the one of Figure~\ref{fig:behavior1D_nograd}. The cardinality of the set of candidates $\Omega_k^\star$ is smaller, as can be seen when comparing Figure~\ref{fig:behavior1D_grad6} with Figure~\ref{fig:behavior1D_nograd6}.}
    \label{fig:behavior1D_grad}
  \end{figure}

\begin{table}[!ht]
\begin{subtable}[t]{0.45\textwidth}
\centering
\begin{tabular}[t]{lccccc}
\toprule
Iteration & $|\calV_k|$ & primal & $\distH(\calV_k, X^\star)$\\
\midrule
0 & 2 & 3.80563e+03 & 3.3e-01 \\
1 & 3 & 3.79912e+03 & 1.7e-01 \\
2 & 5 & 9.39226e+02 & 8.3e-02 \\
3 & 9 & 3.01878e+01 & 4.2e-02 \\
4 & 17 & 1.84675e+01 & 2.1e-02 \\
5 & 33 & 1.72061e+01 & 1.0e-02 \\
6 & 43 & 1.70209e+01 & 5.1e-03 \\
7 & 49 & 1.69895e+01 & 2.7e-03 \\
8 & 55 & 1.69826e+01 & 1.2e-03 \\
9 & 61 & 1.69810e+01 & 7.2e-04 \\
10 & 67 & 1.69873e+01 & 2.6e-04 \\
11 & 73 & 1.69828e+01 & 2.3e-04 \\
12 & 79 & 1.69806e+01 & 1.9e-05 \\
13 & 89 & 1.69811e+01 & 1.9e-05 \\
14 & 105 & 1.69806e+01 & 1.9e-05 \\
15 & 132 & 1.69805e+01 & 1.2e-05 \\
16 & 162 & 1.69805e+01 & 4.3e-06 \\
17 & 208 & 1.69805e+01 & 3.5e-06 \\
18 & 272 & 1.69805e+01 & 4.6e-07 \\
\bottomrule
\end{tabular}
\caption{Refinement rule with second-order bounds.\label{tab:behavior1D_nograd}}
\end{subtable}
\hspace{\fill}
\begin{subtable}[t]{0.45\textwidth}
\centering
\begin{tabular}[t]{lccccc}
\toprule
Iteration & $|\calV_k|$ & primal & $\distH(\calV_k, X^\star)$\\
\midrule
0 & 2 & 3.80563e+03 & 3.3e-01 \\
1 & 3 & 3.79912e+03 & 1.7e-01 \\
2 & 5 & 9.39226e+02 & 8.3e-02 \\
3 & 9 & 3.01878e+01 & 4.2e-02 \\
4 & 17 & 1.84675e+01 & 2.1e-02 \\
5 & 33 & 1.72061e+01 & 1.0e-02 \\
6 & 43 & 1.70209e+01 & 5.1e-03 \\
7 & 45 & 1.69895e+01 & 2.7e-03 \\
8 & 47 & 1.69895e+01 & 2.7e-03 \\
9 & 53 & 1.69826e+01 & 1.2e-03 \\
10 & 55 & 1.69826e+01 & 1.2e-03 \\
11 & 61 & 1.69810e+01 & 7.2e-04 \\
12 & 67 & 1.69873e+01 & 2.6e-04 \\
13 & 73 & 1.69828e+01 & 2.3e-04 \\
14 & 79 & 1.69806e+01 & 1.9e-05 \\
15 & 83 & 1.69831e+01 & 1.9e-05 \\
16 & 87 & 1.69816e+01 & 1.9e-05 \\
17 & 92 & 1.69806e+01 & 1.2e-05 \\
18 & 96 & 1.69805e+01 & 4.3e-06 \\
19 & 98 & 1.69805e+01 & 4.3e-06 \\
20 & 100 & 1.69805e+01 & 4.3e-06 \\
21 & 104 & 1.69805e+01 & 3.5e-06 \\
... & ... & ... & ... \\
31 & 125 & 1.69805e+01 & 3.5e-06 \\
32 & 128 & 1.69805e+01 & 4.6e-07 \\
\bottomrule
\end{tabular}
\caption{Refinement rule with second-order upper bounds and gradient lower bound.\label{tab:behavior1D_grad}}
\end{subtable}
\caption{Algorithm's behavior for the 1D super-resolution problem. Here, we set $\sigma = 2/M$, $M=20$.\label{tab:behavior1D}}
\end{table}%

\paragraph{Second-order upper bound with first order gradient}
We turn our attention to the second order selection process with first order gradient, see Definition~\ref{def:second_order_selection_with_grad}. The results are displayed in Table~\ref{tab:behavior1D_grad} and Figure \ref{fig:behavior1D_grad}. In comparison to the previous test, a lower bound on the gradient is used to reduce the cardinality of $\Omega_k^\star$. Two measures of complexity can be used to compare the approaches: i) the cardinality $|\calV_k|$ needed to reach a given accuracy $\distH(\calV_k,X^\star)$, or ii) the number of iterations to reach the same accuracy. 
Reducing the cardinality of $\Omega_k^\star$ can be detrimental to the second notion of complexity. 
For example, compare Figure~\ref{fig:behavior1D_grad6} and Figure~\ref{fig:behavior1D_nograd6}. The cells that are not flagged for refinement in Figure~\ref{fig:behavior1D_grad6} are flagged in Figure~\ref{fig:behavior1D_grad7} and refined at Iteration $7$. Iteration 7 can be seen as a failed {\em zwischenzug} iteration that loses a tempo. However, for the first notion of complexity, the conclusion is different. We see that for this particular example, adding a gradient lower bounds allows reaching the target precision in Table~\ref{tab:behavior1D_grad} with less than half the number of vertices for the vanilla second order bound.
A full complexity analysis would require a fine analysis of the quadratic programming solver, which is out of the scope of this paper. 

\subsection{2D experiments} \label{sec:2dexp}

\paragraph{The problem}
In this section, we assume that the sampling points $z_m$ lie on a Euclidean grid. 
More precisely, we suppose that $\sqrt{M}\in \N$ and that each index $m\in \llbracket 1, M\rrbracket$ can be decomposed as $m=(m_1,m_2) \in \llbracket 0,\sqrt{M}-1 \rrbracket^2$ and $z_m = \frac{1}{\sqrt{M}}(m_1,m_2)$ \text{ with $M=15$. $\bar \mu$ is chosen as}
\begin{equation*}
  \bar \mu = -9\delta_{(1/3,1/3)} + 8\delta_{(1/3,2/3)} + 5\delta_{(2/3,2/3)}.
\end{equation*}

\paragraph{Results}

We begin by showcasing the behaviour of the algorithm when the second-order upper bound \label{eq:second_order_bound} is used. Table \ref{tab:super_resolution_2D_nograd} and Figure \ref{fig:super_resolution_2D} summarize the algorithm's behavior.
The conclusions are similar to the previous section and consistent with Theorem \ref{th:linConv}: after a burn-in period, the grid is refined in a multi-scale fashion, only around the support $X^\star$ of the solution $\mu^\star$. To control the complexity of our algorithm we refine only the cells with largest diameter. The effect of this strategy is striking in $2D$, where the algorithm spends some iterations to refine larger cells only. See Figure~\ref{fig:super_resolution_2D}, iterations $6,8,9, 10$. At these iterations, it is not the cells containing the maximizers $X_k$ which are refined, but only the largest ones which were not refined in the previous iterations. Yet, the table indicates a clear advantage of this adaptive method: about 3000 vertices are sufficient to reach a precision $10^{-4}$, while the same guarantee would be obtained only with $10^8$ vertices for a uniform refinement. The results when a lower bound of the gradient is added are displayed in Table \ref{tab:super_resolution_2D_grad} and Figure \ref{fig:super_resolution_2D_grad}. For this example, there is no increase in the number of iterations, and only a slight decrease of the number of vertices is observed. Again, a more detailed analysis of the effects of gradient-including rules is beyond the scope of this paper.

\begin{figure}[!ht]
    \centering
    \begin{subfigure}[b]{0.22\textwidth}
      \includegraphics[width=\textwidth]{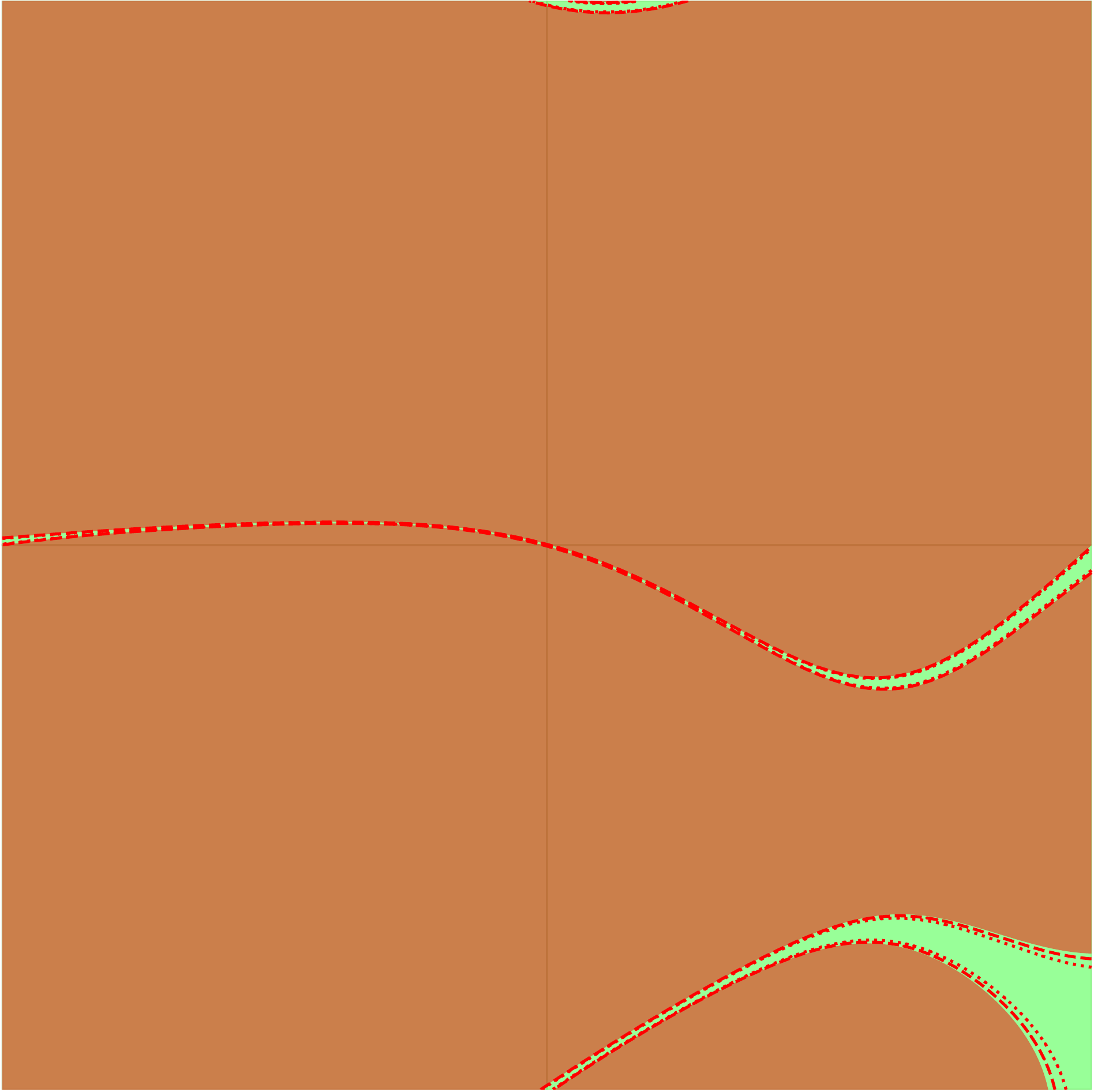}
      \caption{$|\calV_1|=9$}
    \end{subfigure}
    \begin{subfigure}[b]{0.22\textwidth}
      \includegraphics[width=\textwidth]{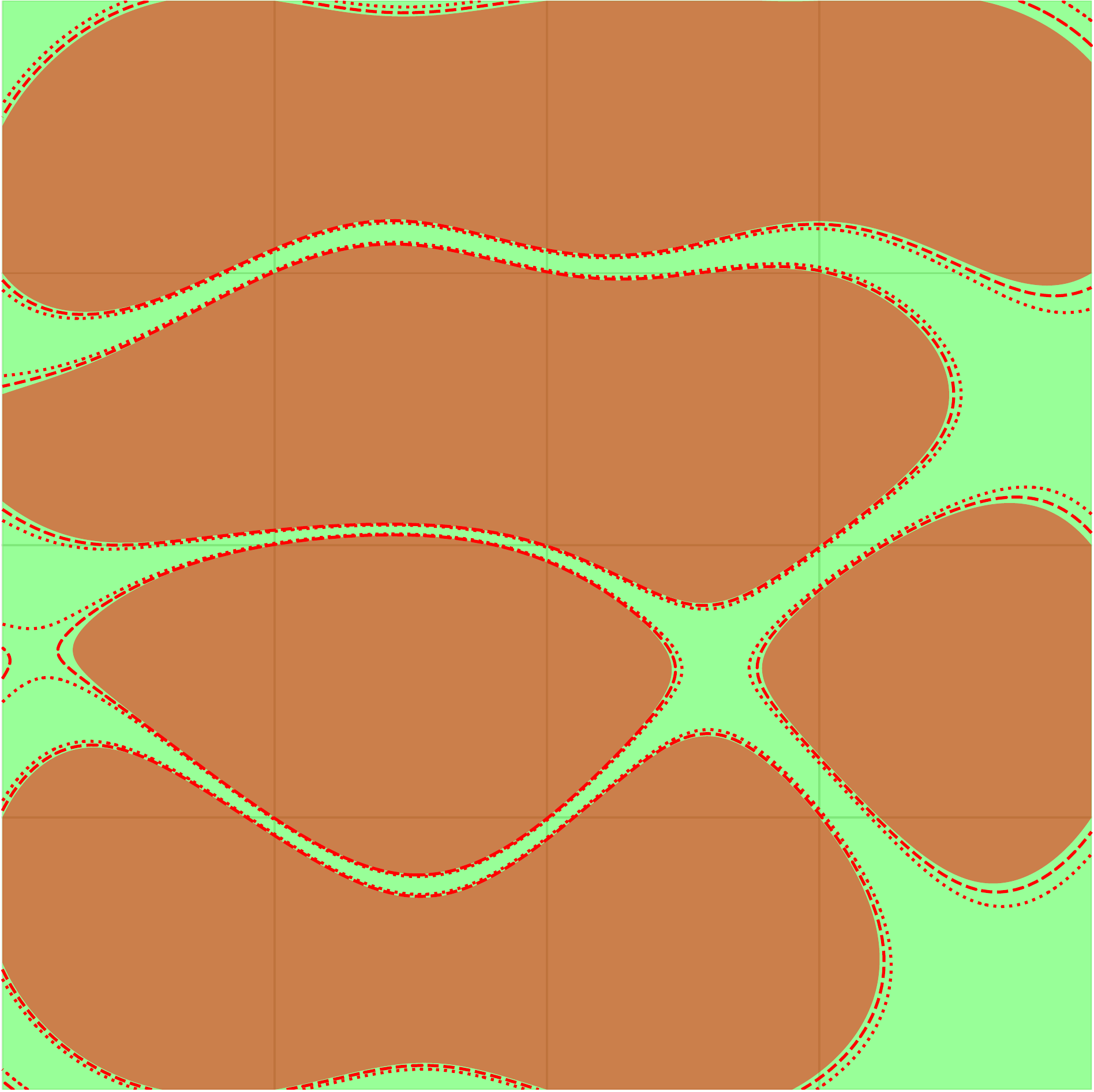}
      \caption{$|\calV_2|=25$}
    \end{subfigure}
    \begin{subfigure}[b]{0.22\textwidth}
      \includegraphics[width=\textwidth]{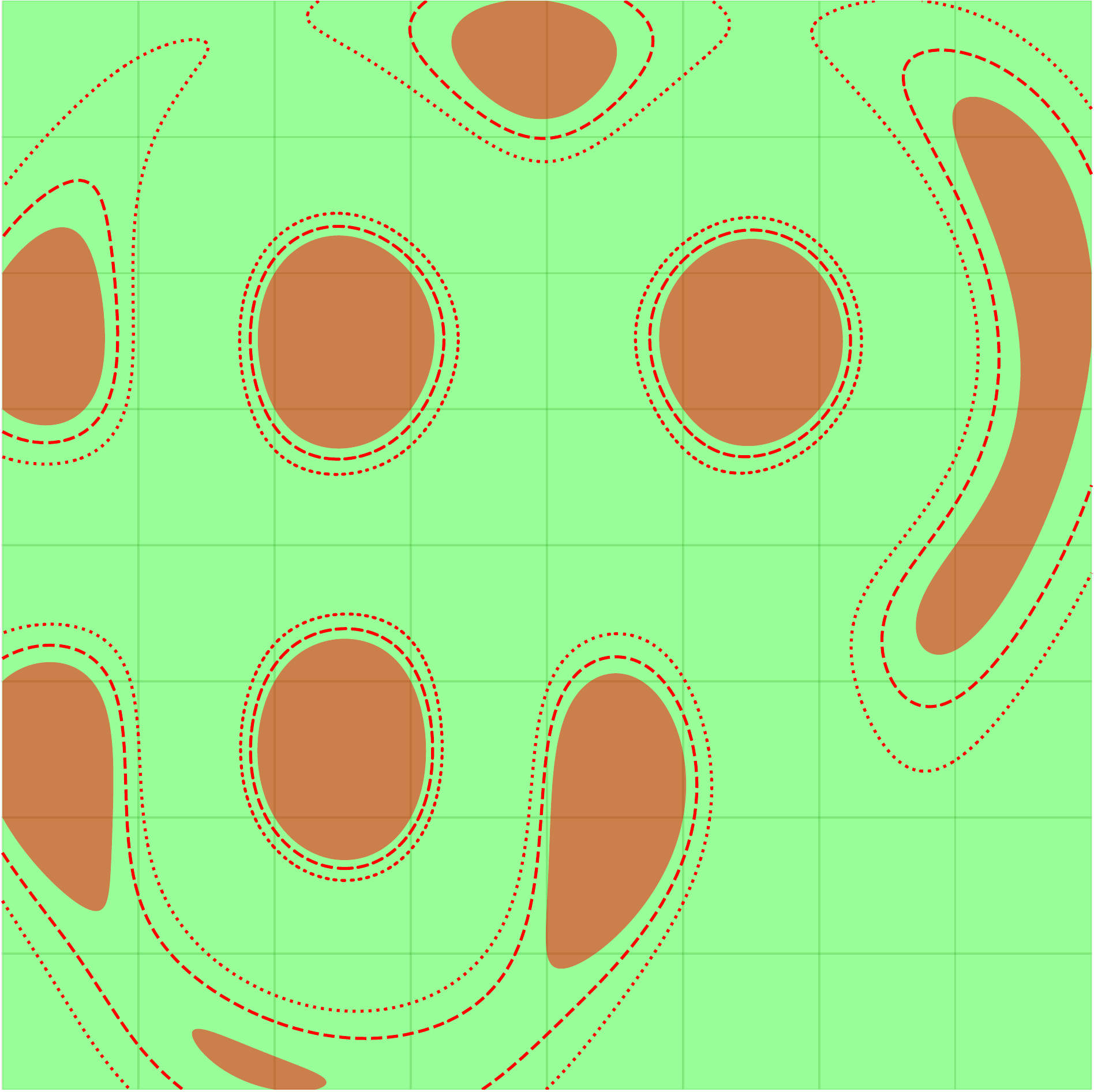}
      \caption{$|\calV_3|=81$}
    \end{subfigure}
    \begin{subfigure}[b]{0.22\textwidth}
      \includegraphics[width=\textwidth]{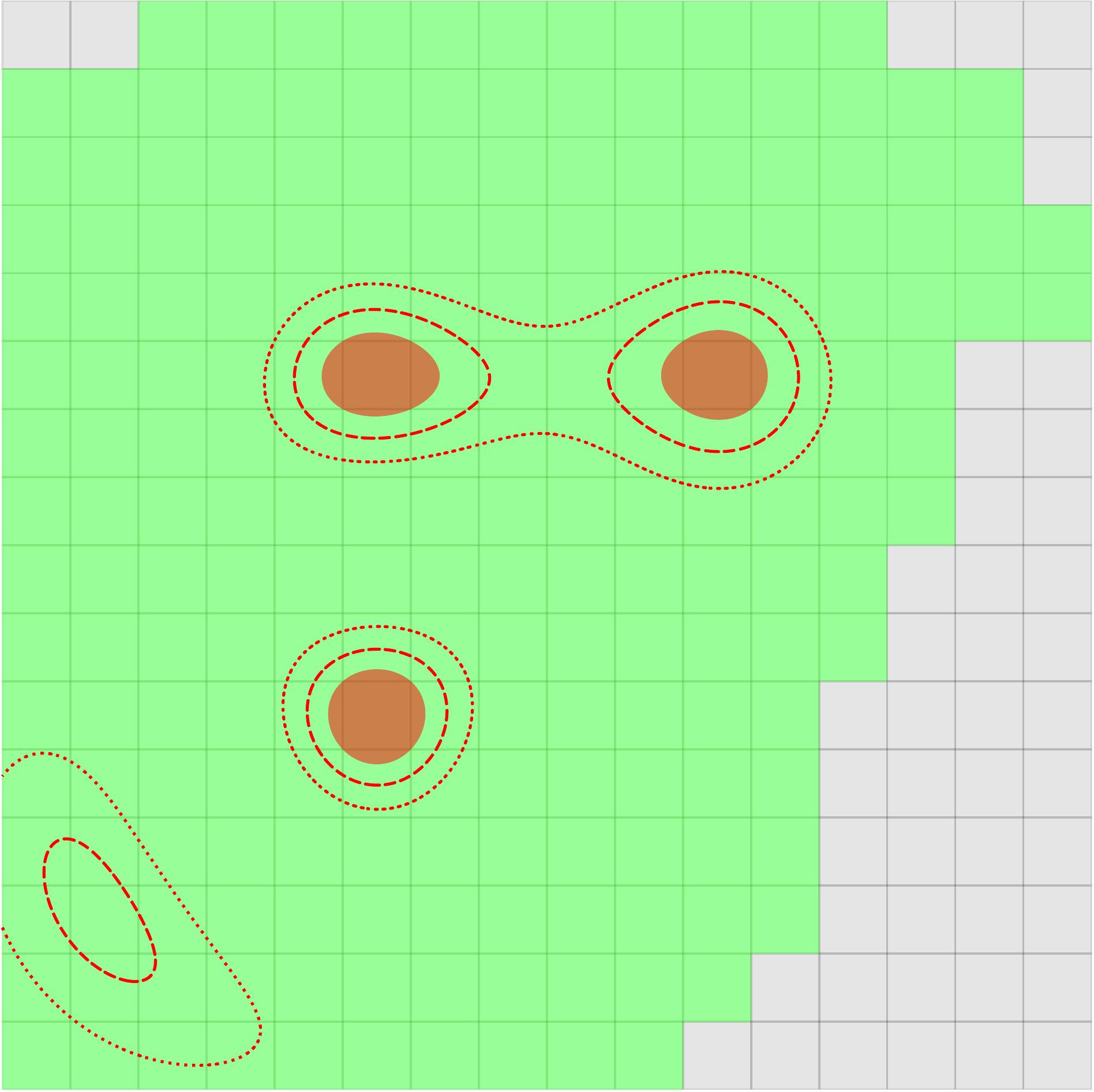}
      \caption{$|\calV_4|=289$}
    \end{subfigure}
    \begin{subfigure}[b]{0.22\textwidth}
      \includegraphics[width=\textwidth]{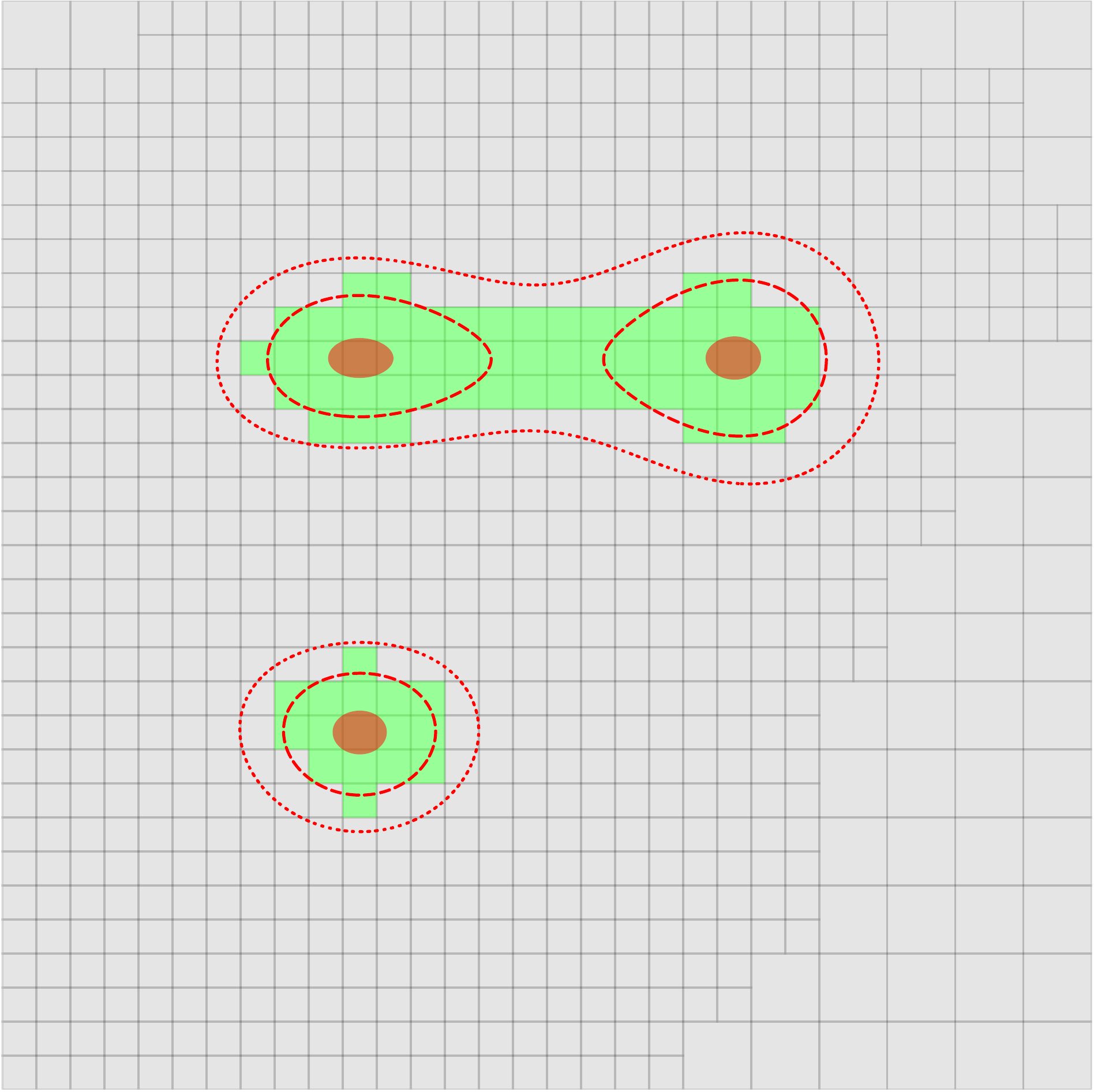}
      \caption{$|\calV_5|=951$}
    \end{subfigure}
    \begin{subfigure}[b]{0.22\textwidth}
      \includegraphics[width=\textwidth]{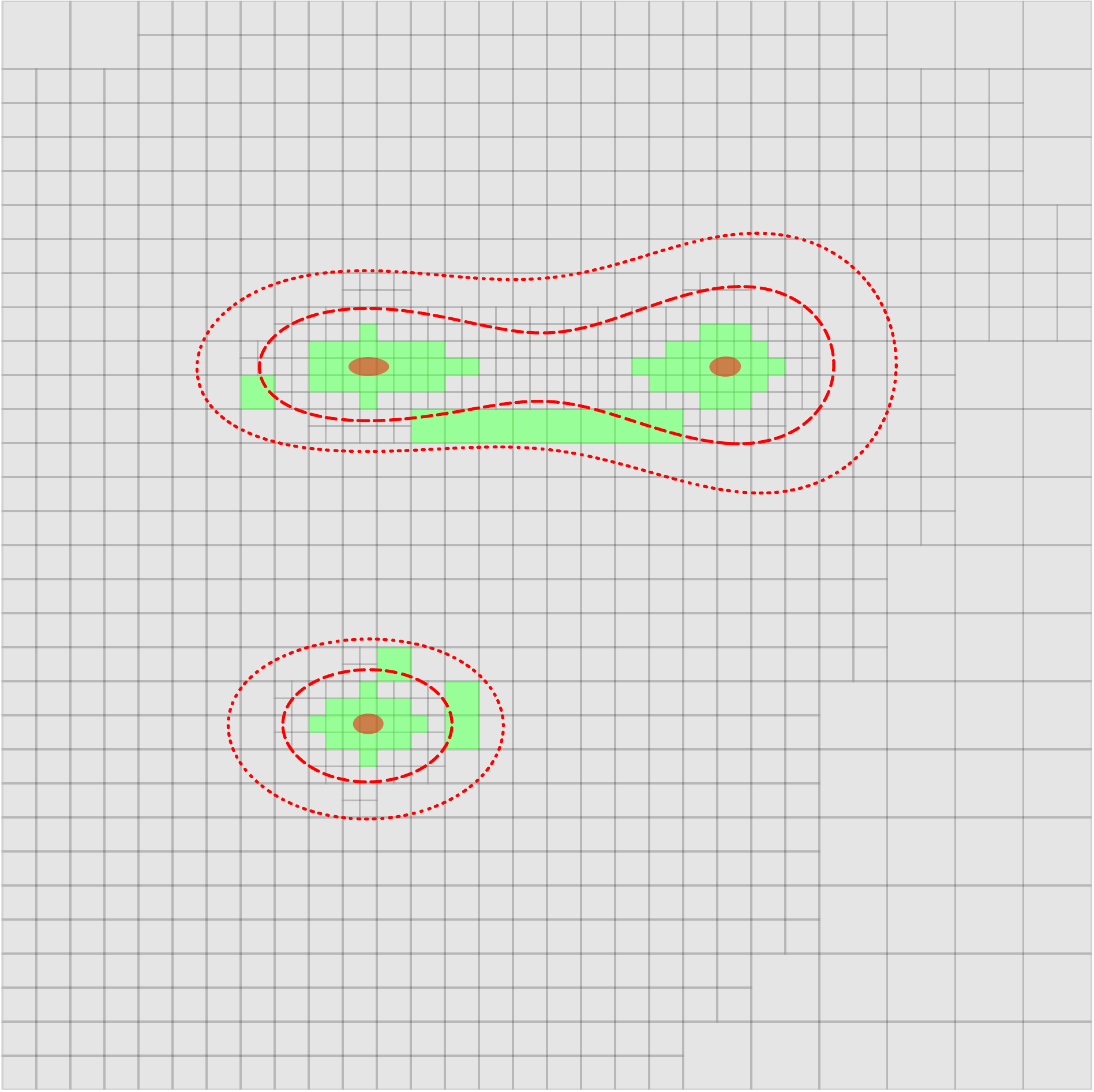}
      \caption{$|\calV_6|=1210$}
      \label{fig:2D1:6}
    \end{subfigure}
    \begin{subfigure}[b]{0.22\textwidth}
      \includegraphics[width=\textwidth]{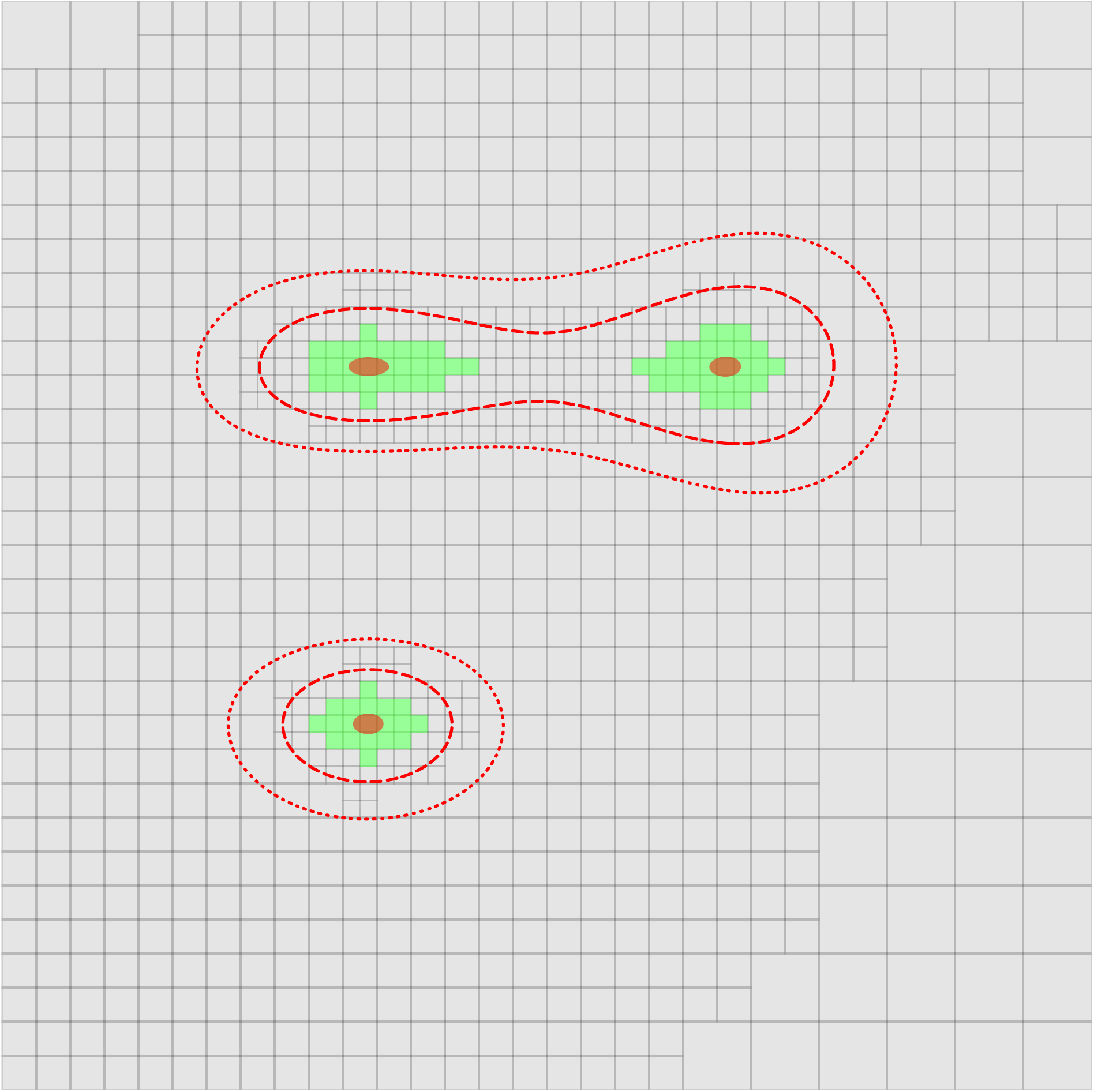}
      \caption{$|\calV_7|=1246$}
      \label{fig:2D1:7}
    \end{subfigure}
    \begin{subfigure}[b]{0.22\textwidth}
      \includegraphics[width=\textwidth]{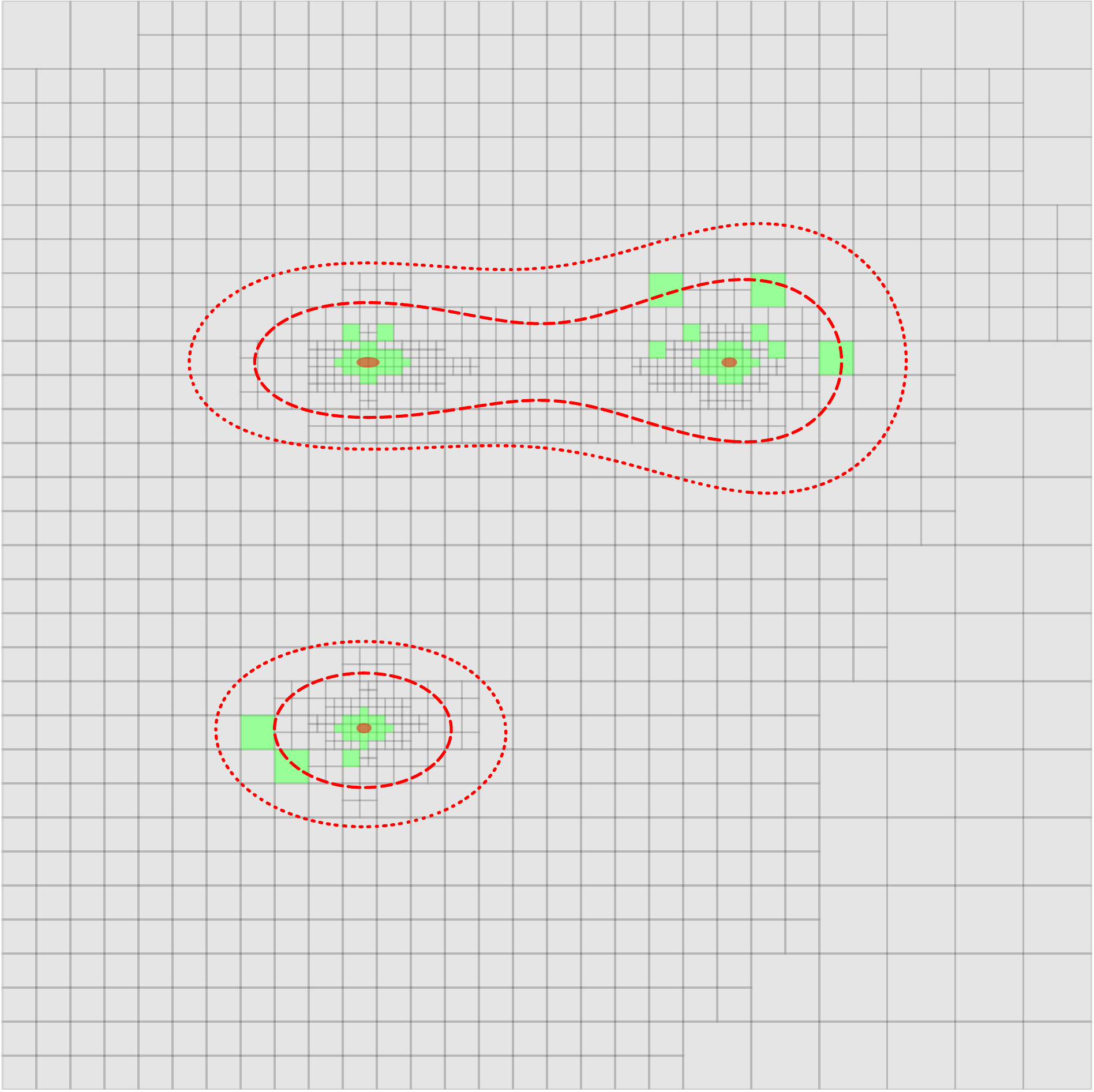}
      \caption{$|\calV_8|=1512$}
    \end{subfigure}
    \begin{subfigure}[b]{0.22\textwidth}
      \includegraphics[width=\textwidth]{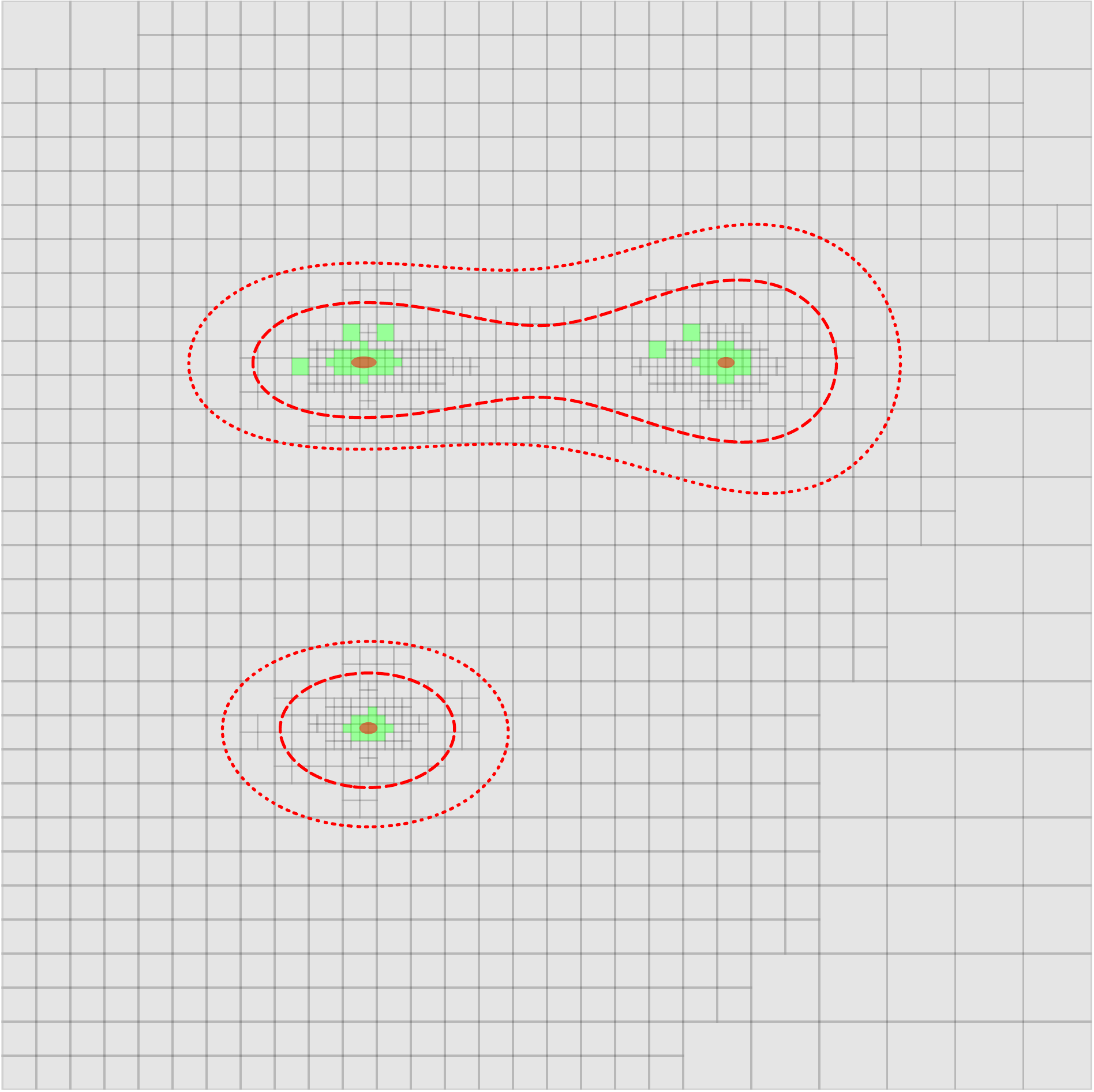}
      \caption{$|\calV_9|=1529$}
    \end{subfigure}
    \begin{subfigure}[b]{0.22\textwidth}
      \includegraphics[width=\textwidth]{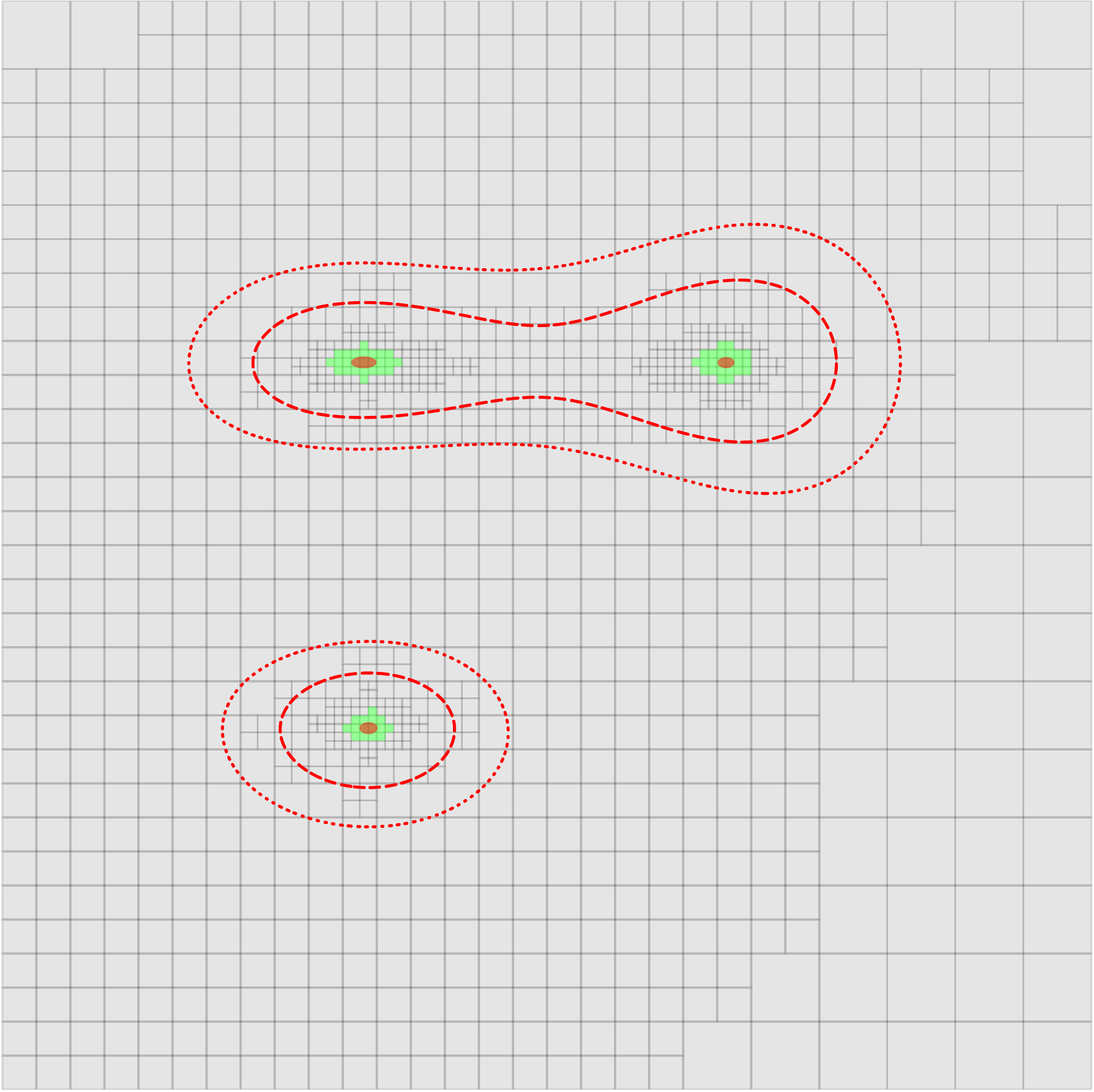}
      \caption{$|\calV_{10}|=1545$}
    \end{subfigure}
    \begin{subfigure}[b]{0.22\textwidth}
      \includegraphics[width=\textwidth]{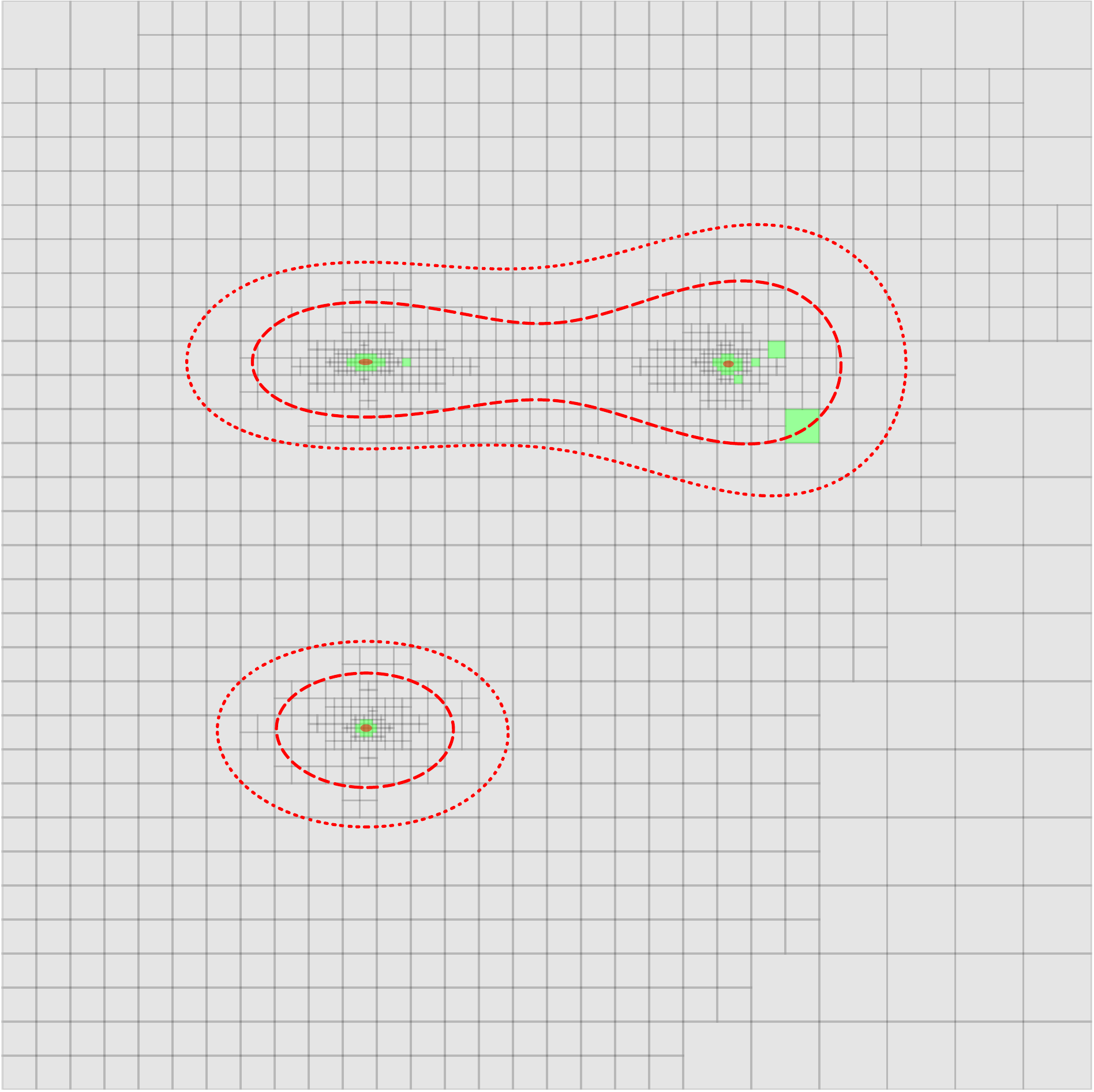}
      \caption{$|\calV_{11}|=1770$}
    \end{subfigure}      
    \begin{subfigure}[b]{0.22\textwidth}
      \includegraphics[width=\textwidth]{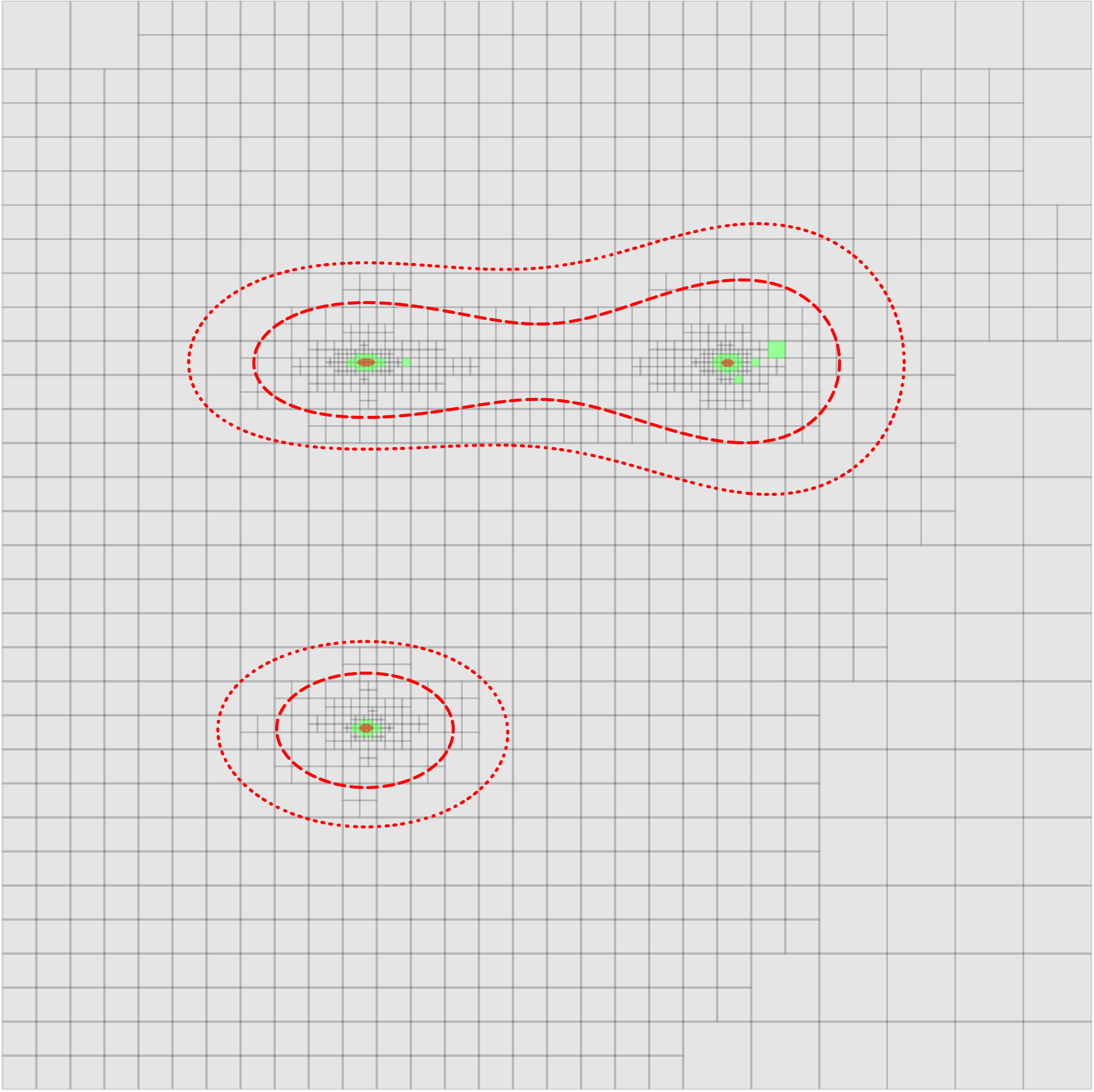}
      \caption{$|\calV_{12}|=1773$}
    \end{subfigure}
    \begin{subfigure}[b]{0.22\textwidth}
      \includegraphics[width=\textwidth]{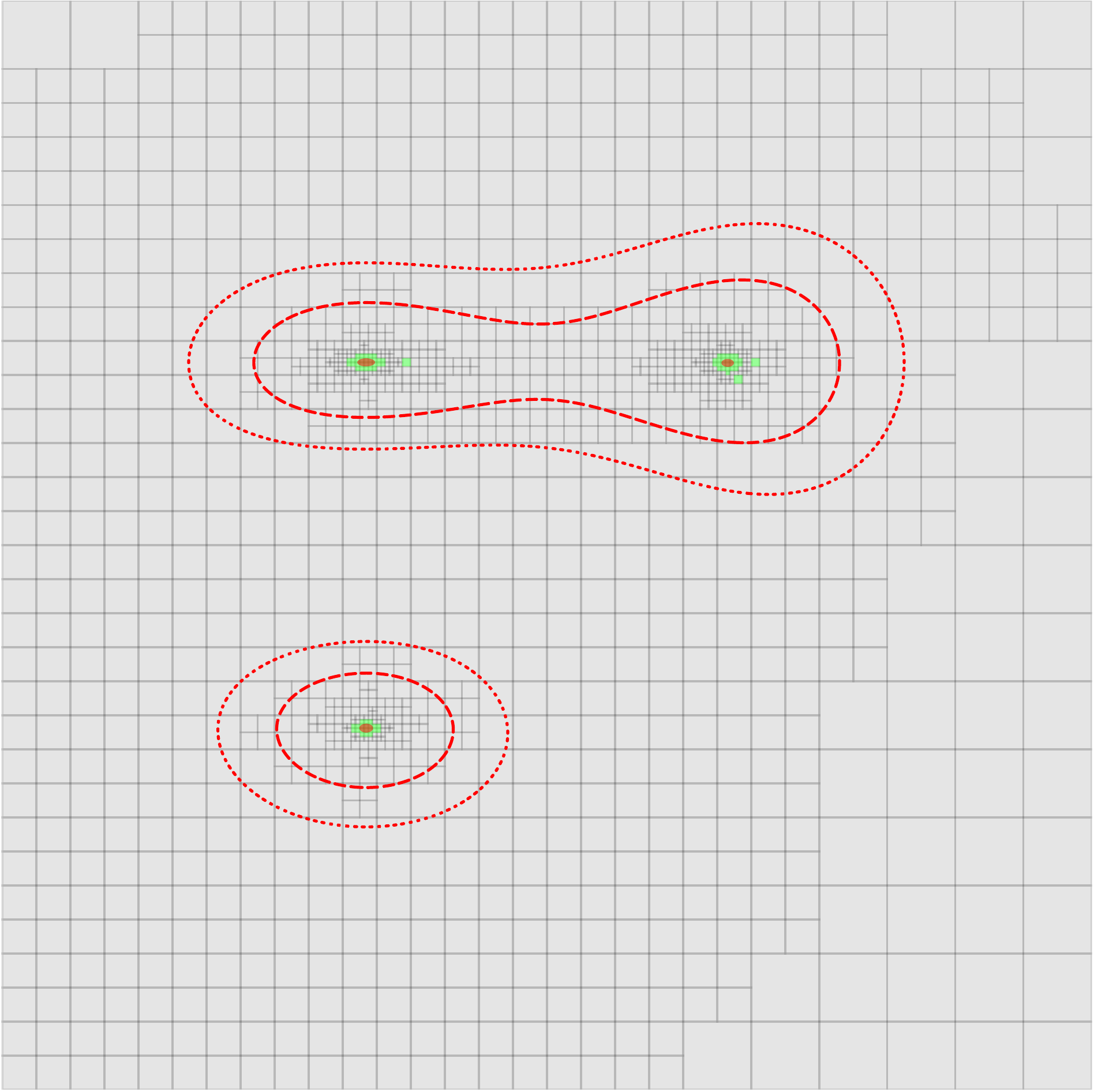}
      \caption{$|\calV_{13}|=1776$}
    \end{subfigure}
    \begin{subfigure}[b]{0.22\textwidth}
      \includegraphics[width=\textwidth]{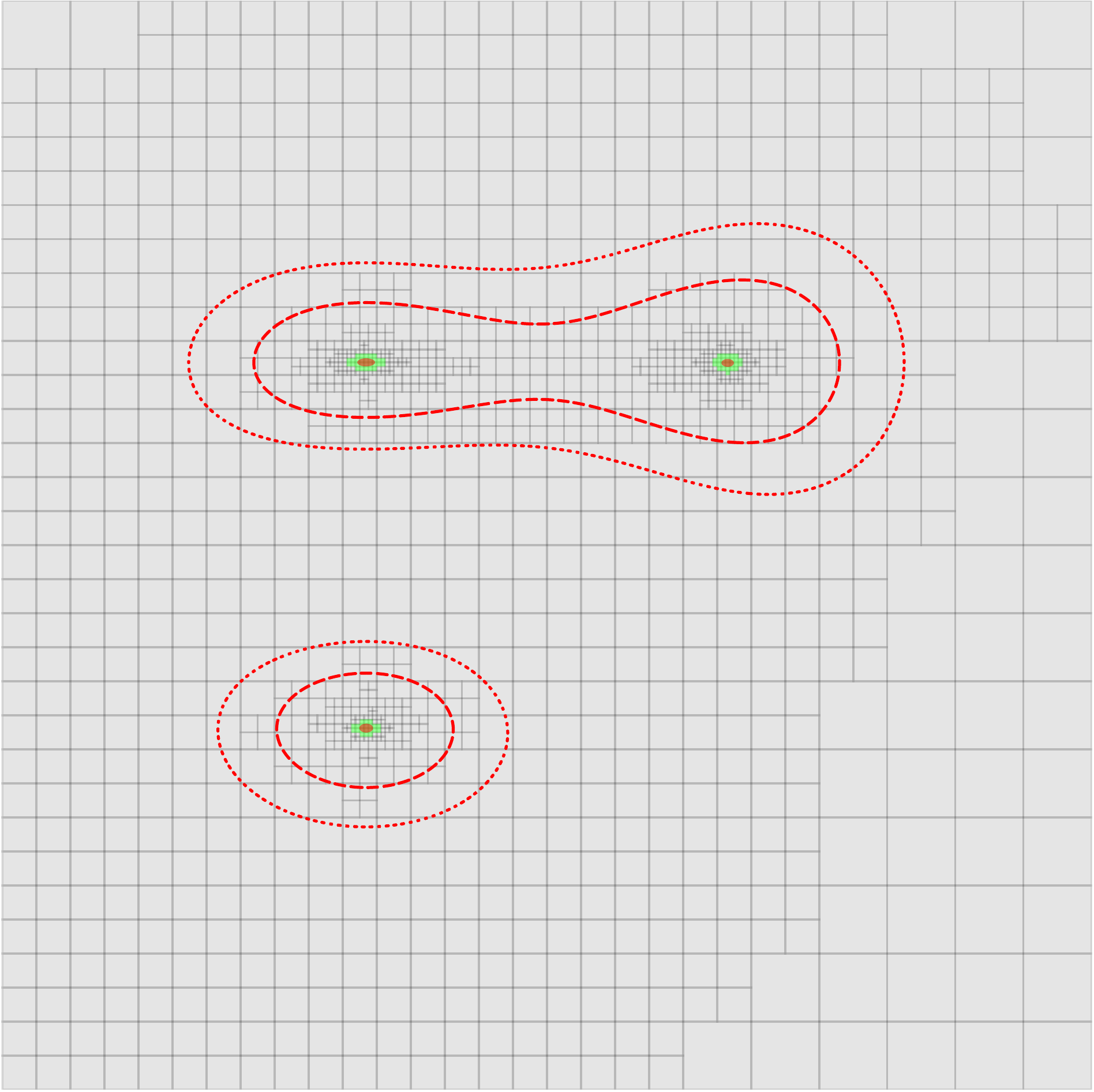}
      \caption{$|\calV_{14}|=1787$}
    \end{subfigure}
    \begin{subfigure}[b]{0.22\textwidth}
      \includegraphics[width=\textwidth]{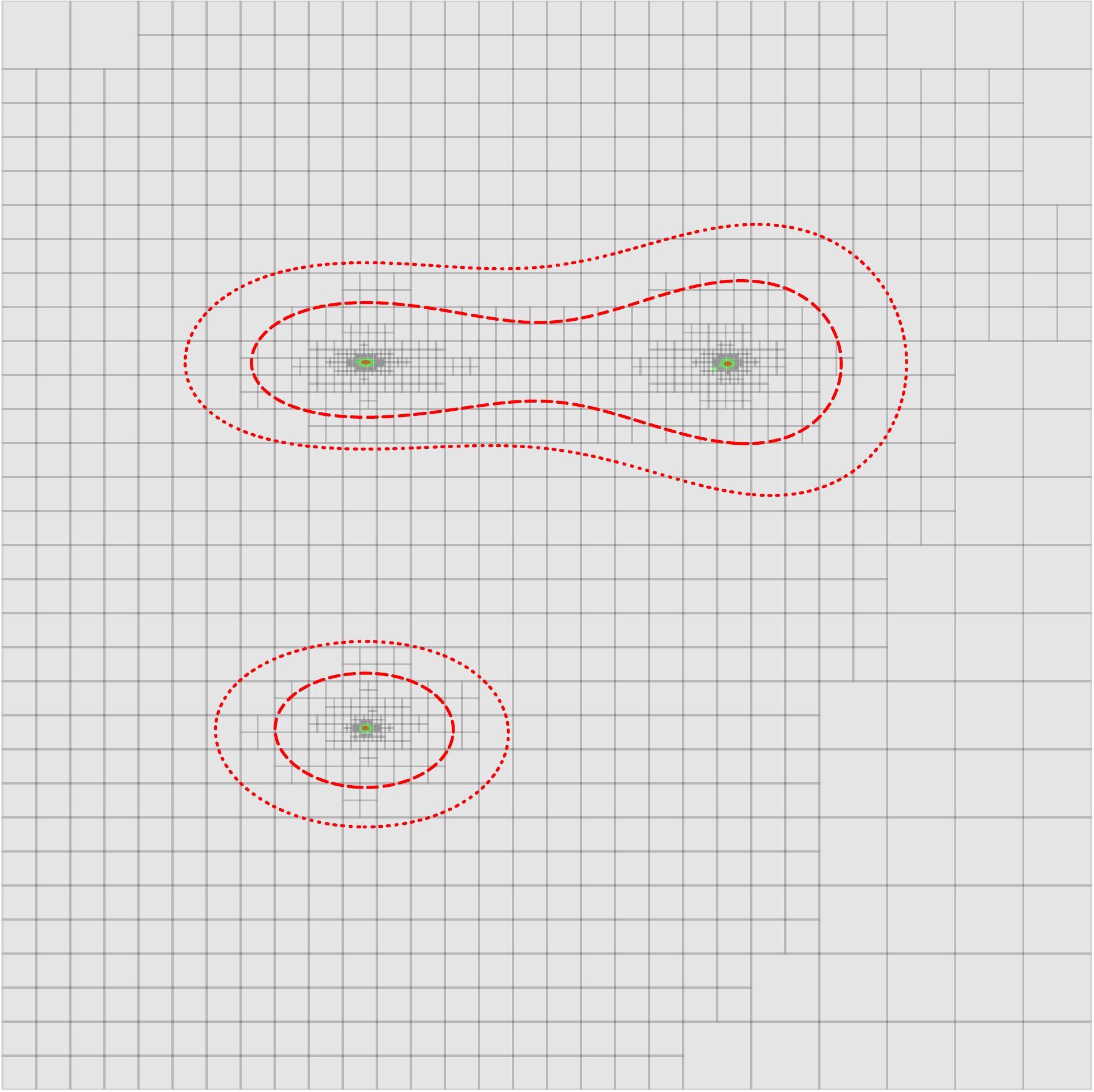}
      \caption{$|\calV_{15}|=2042$}
    \end{subfigure}
    \begin{subfigure}[b]{0.22\textwidth}
      \includegraphics[width=\textwidth]{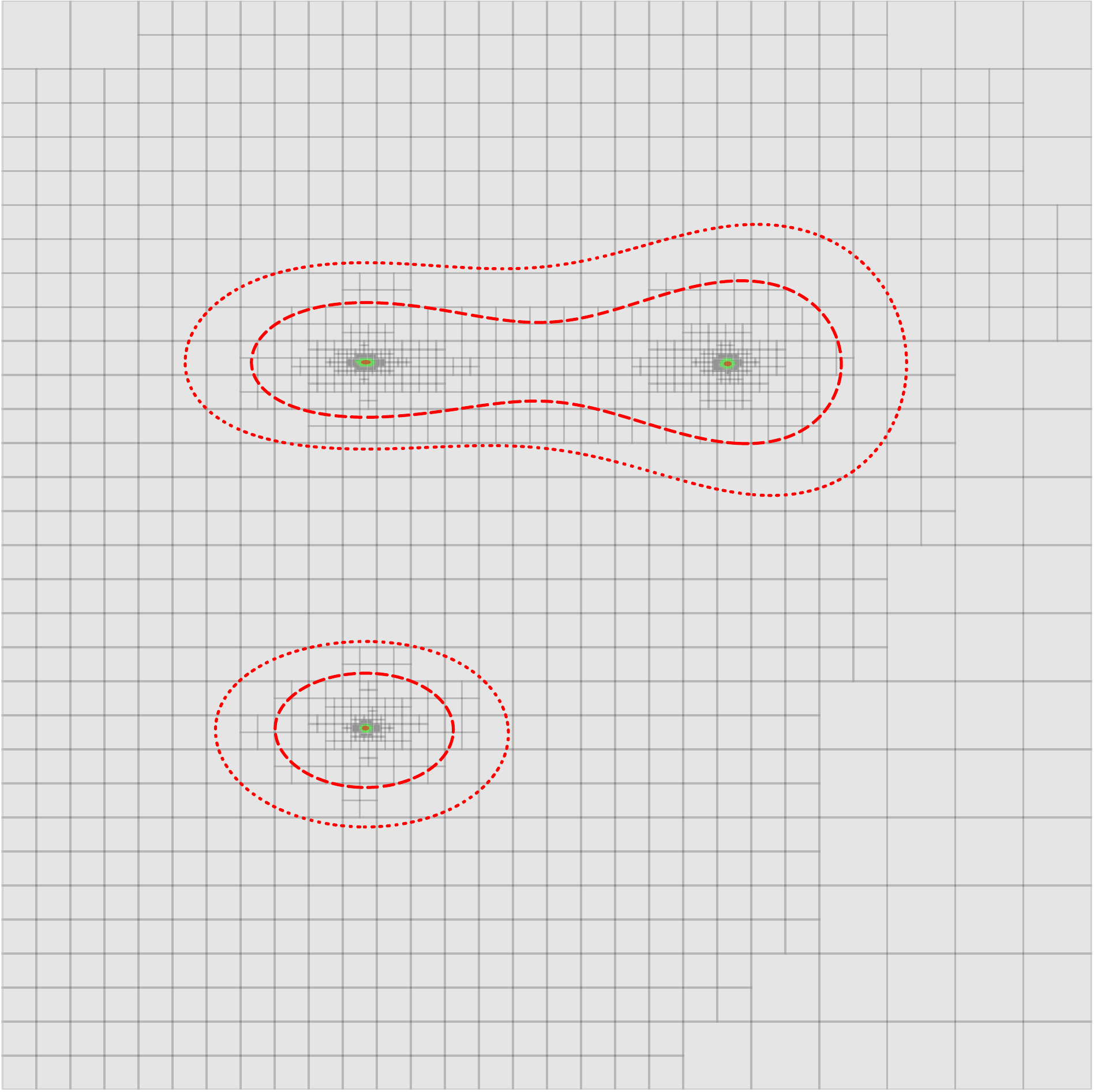}
      \caption{$|\calV_{16}|=2045$}
    \end{subfigure}
    \caption{Our algorithm's behavior on a 2D example for second-order selection rules.
  The set $\Omega_k^\star$ is displayed in green, the superlevelset $1$ of $|A^*q_k|$ is filled with red,  the level-set $0.9$ is represented with a dashed red line and the levelset $0.75$ with a dotted red line. 
    The algorithm starts with a burn-in period of 3 iterations. There, it refines all cells uniformly since the upper-bound is highly inaccurate. Then, only the cells around the locations of $X^\star$ get refined in a multiscale fashion. Remember that only the cells in $\Omega_k^*$ with largest diameter are refined. This explains the behavior of the algorithm between, e.g. Figures~\ref{fig:2D1:6} and \ref{fig:2D1:7}.    
    \label{fig:super_resolution_2D}}
  \end{figure}

\begin{figure}[!t]
    \centering
    \begin{subfigure}[b]{0.22\textwidth}
      \includegraphics[width=\textwidth]{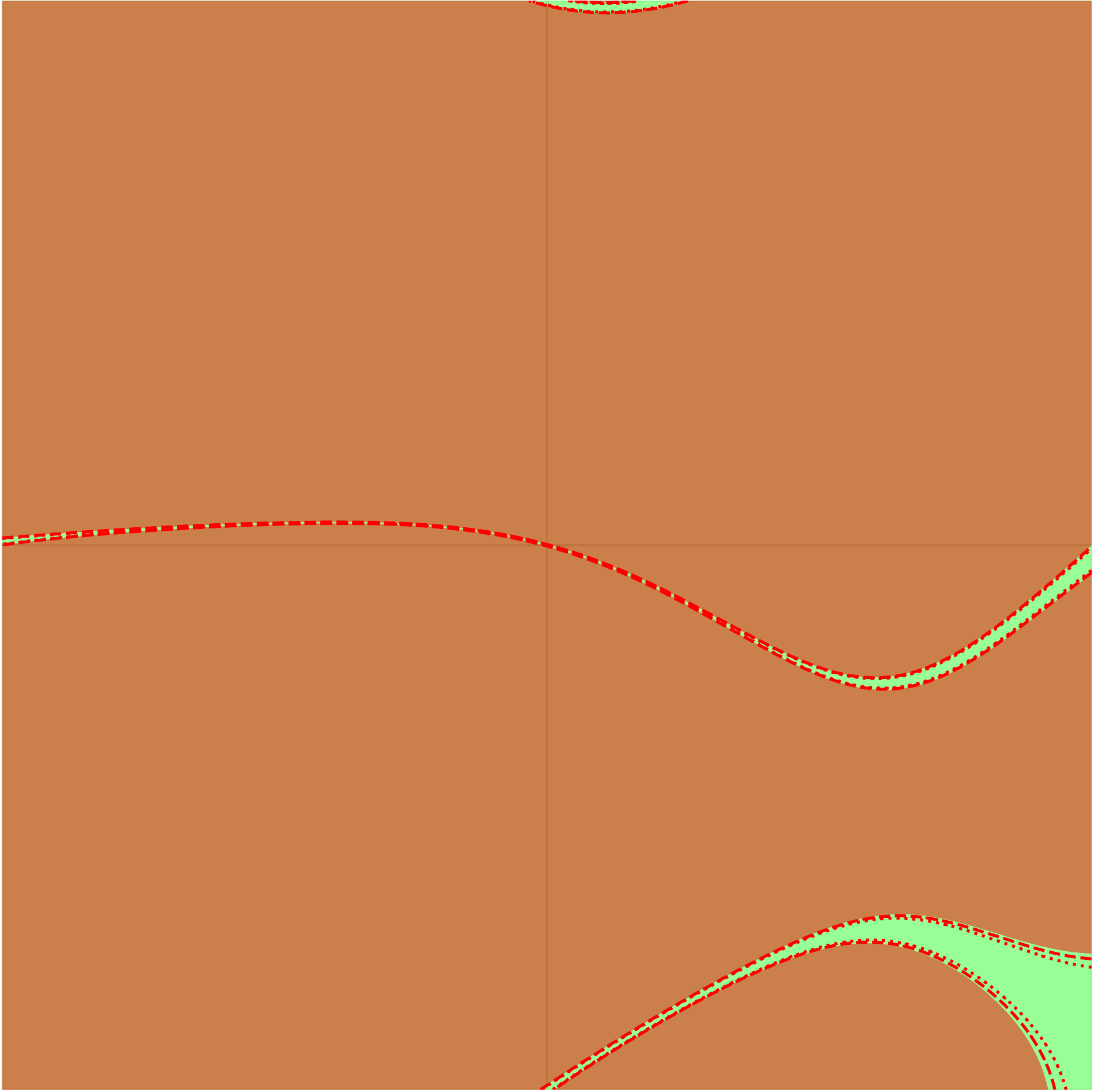}
      \caption{$|\calV_1|=9$}
    \end{subfigure}
    \begin{subfigure}[b]{0.22\textwidth}
      \includegraphics[width=\textwidth]{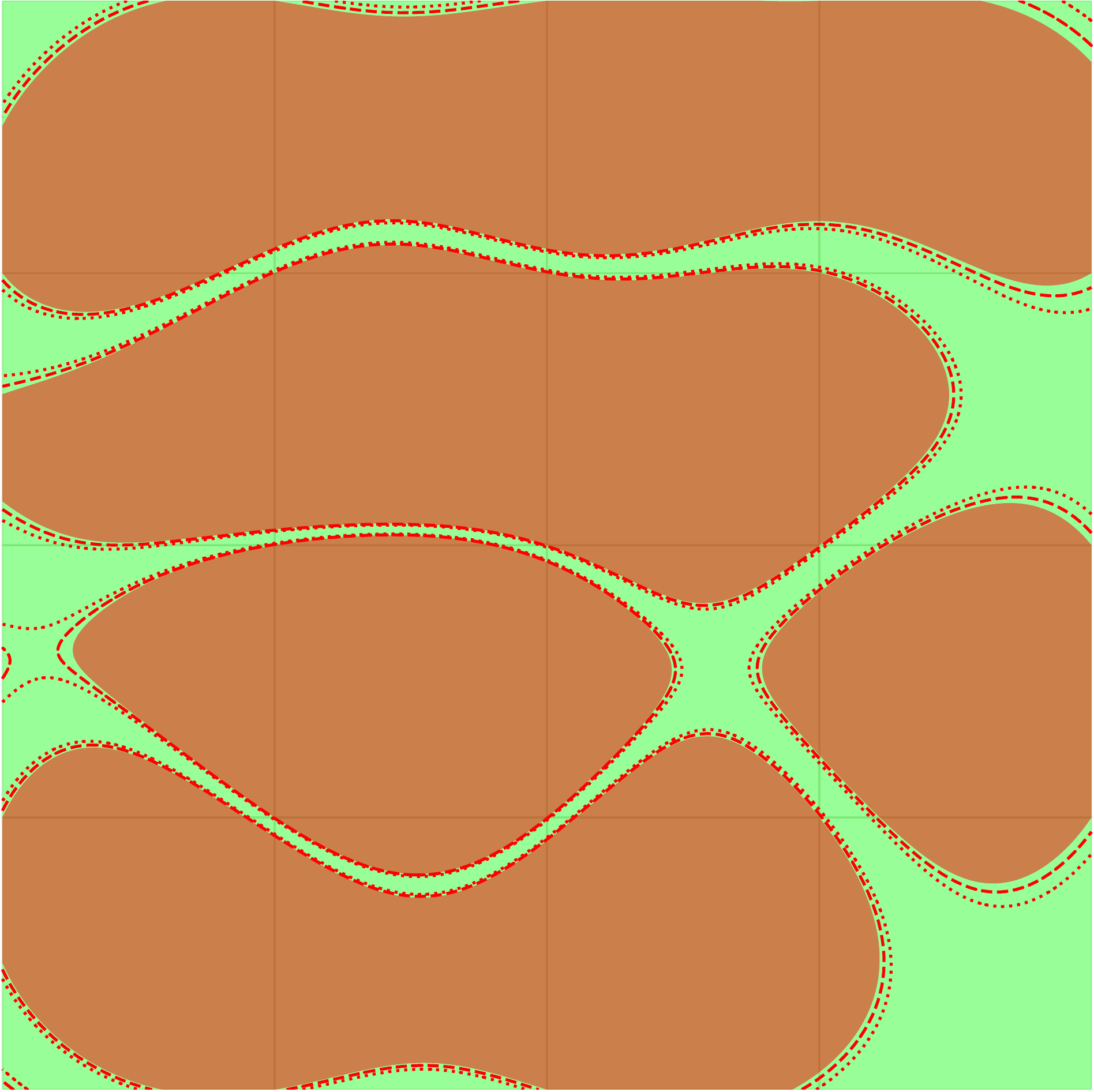}
      \caption{$|\calV_2|=25$}
    \end{subfigure}
    \begin{subfigure}[b]{0.22\textwidth}
      \includegraphics[width=\textwidth]{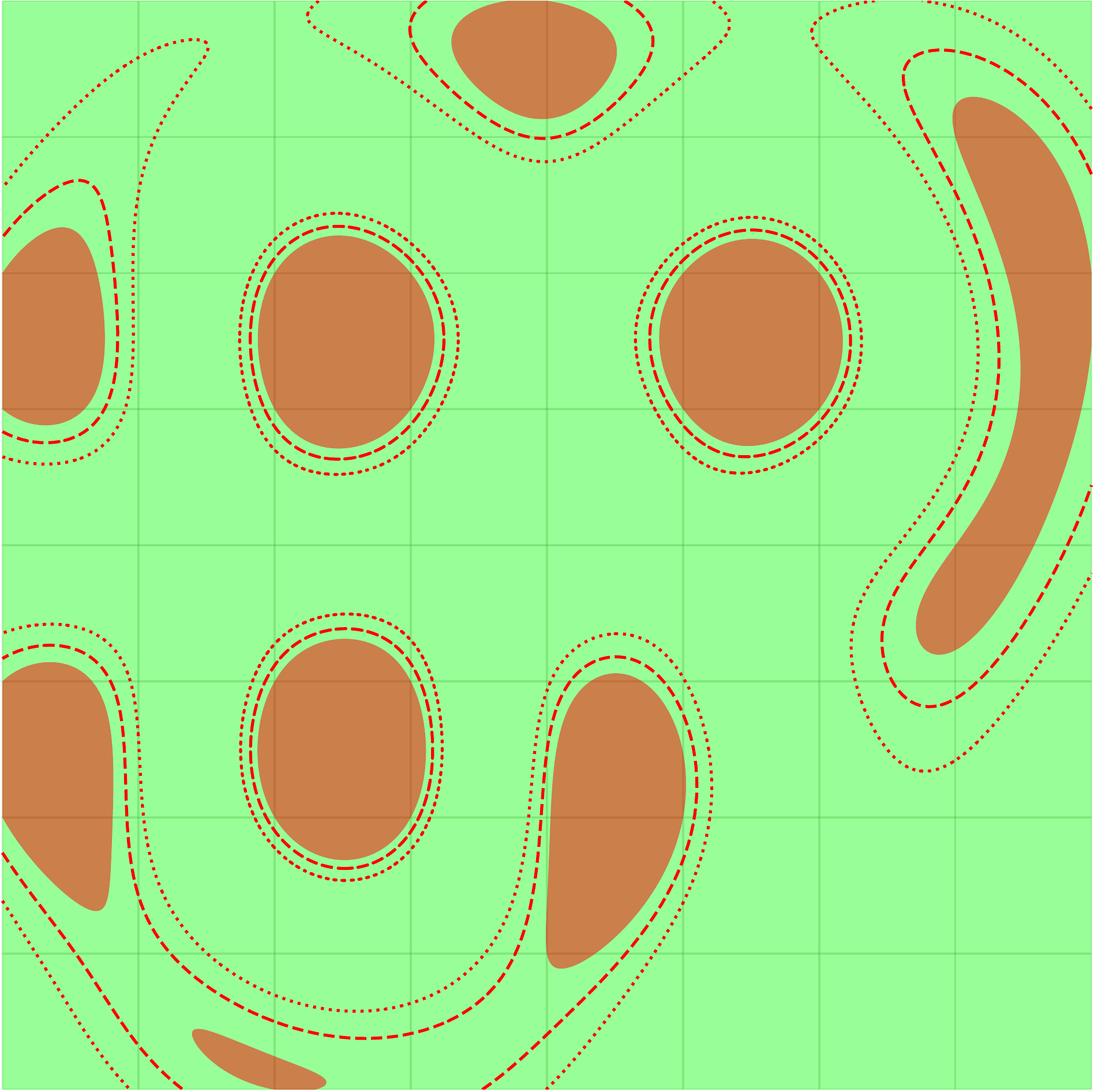}
      \caption{$|\calV_3|=81$}
    \end{subfigure}
    \begin{subfigure}[b]{0.22\textwidth}
      \includegraphics[width=\textwidth]{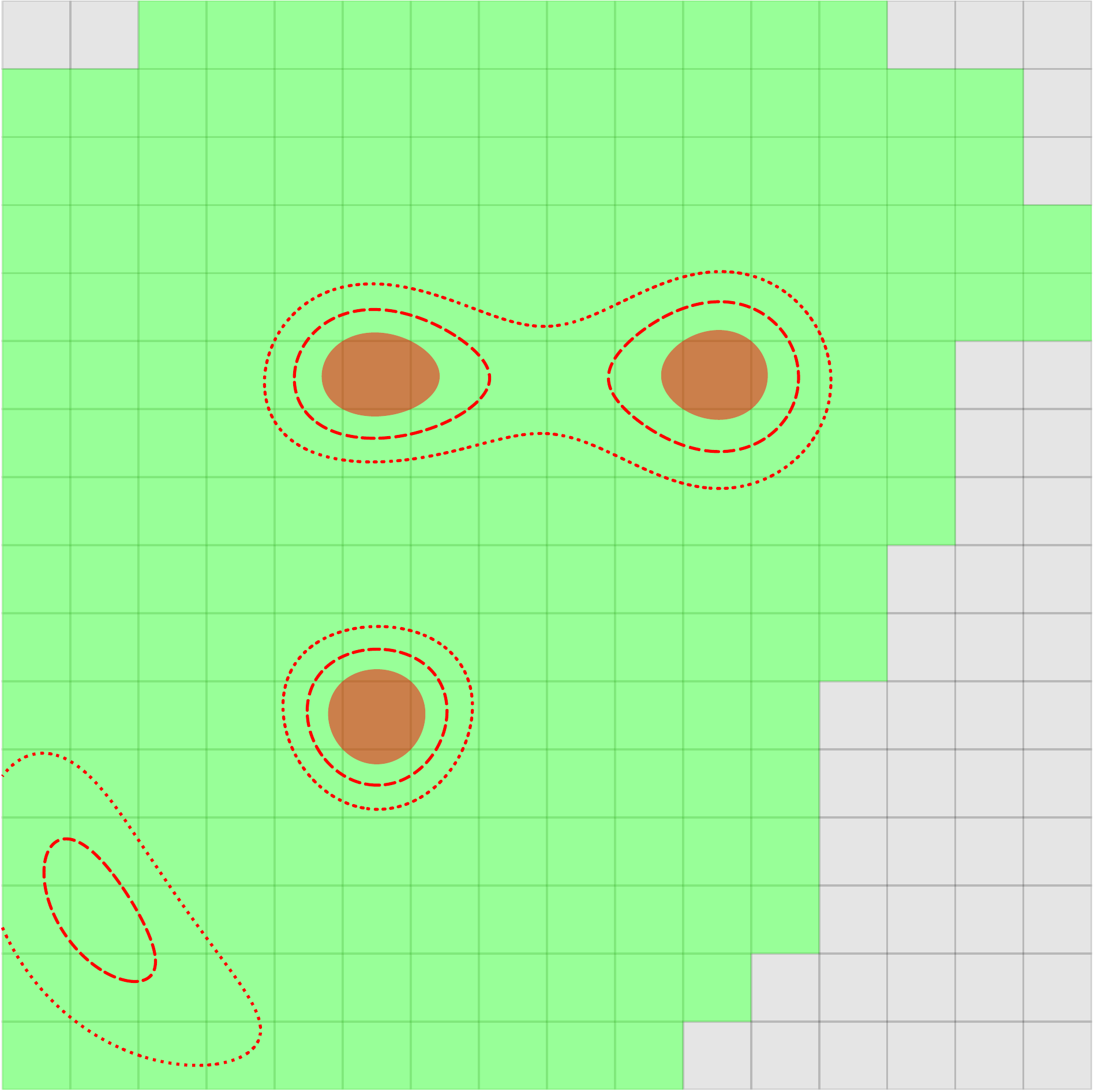}
      \caption{$|\calV_4|=289$}
    \end{subfigure}
    \begin{subfigure}[b]{0.22\textwidth}
      \includegraphics[width=\textwidth]{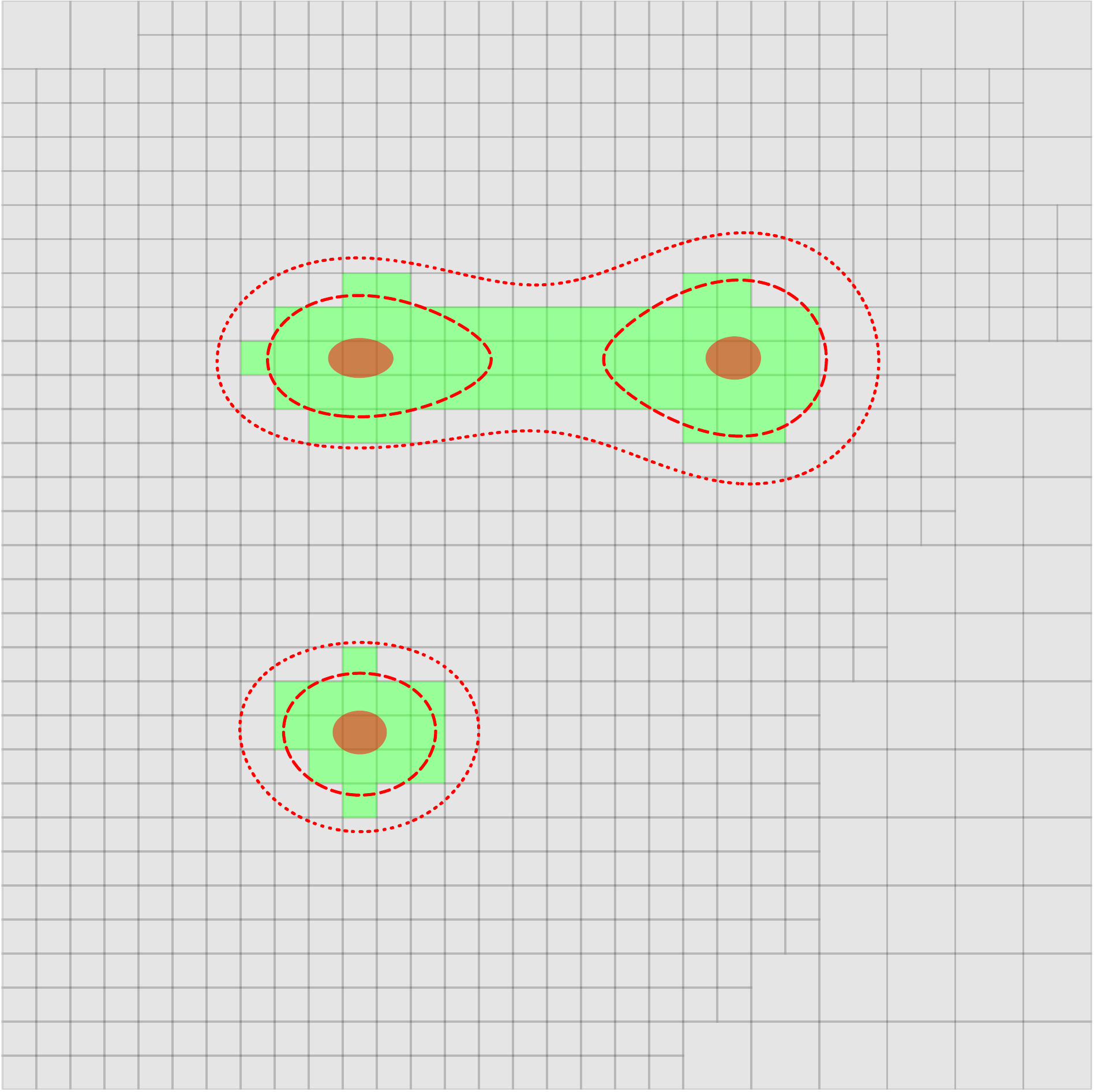}
      \caption{$|\calV_5|=951$}
    \end{subfigure}
    \begin{subfigure}[b]{0.22\textwidth}
      \includegraphics[width=\textwidth]{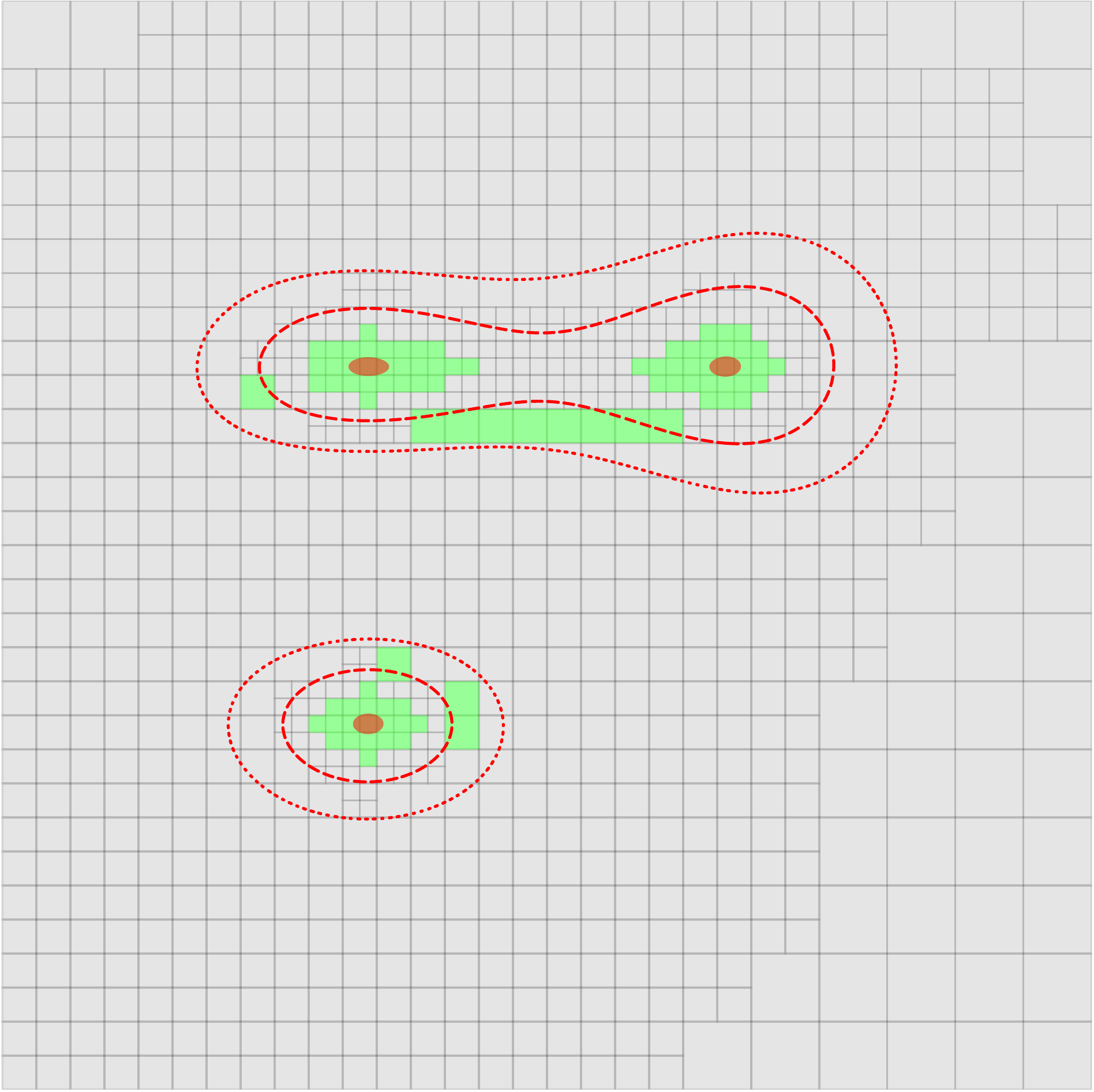}
      \caption{$|\calV_6|=1210$}
      \label{fig:2D1:6:grad}
    \end{subfigure}
    \begin{subfigure}[b]{0.22\textwidth}
      \includegraphics[width=\textwidth]{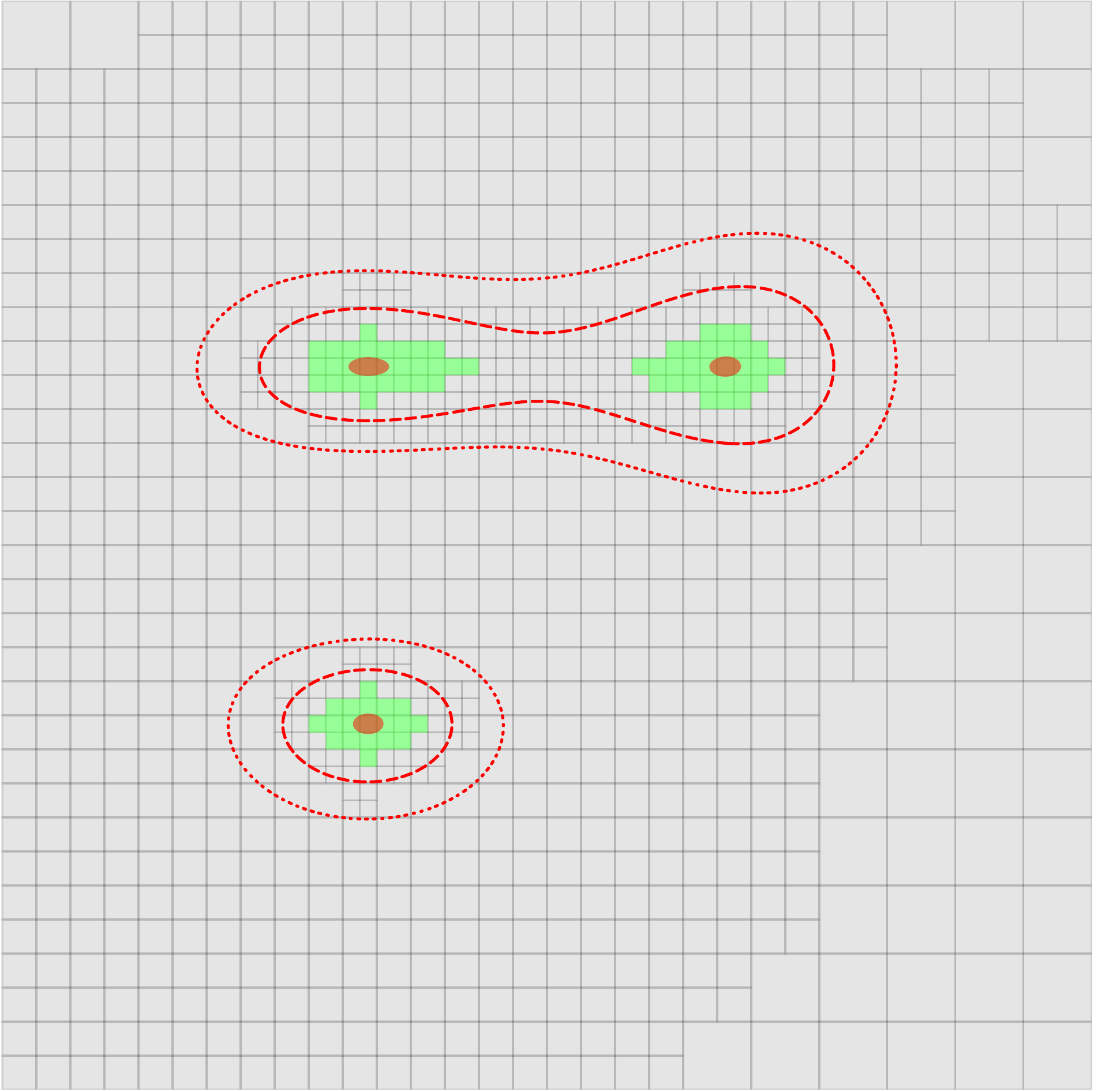}
      \caption{$|\calV_7|=1246$}
      \label{fig:2D1:7:grad}
    \end{subfigure}
    \begin{subfigure}[b]{0.22\textwidth}
      \includegraphics[width=\textwidth]{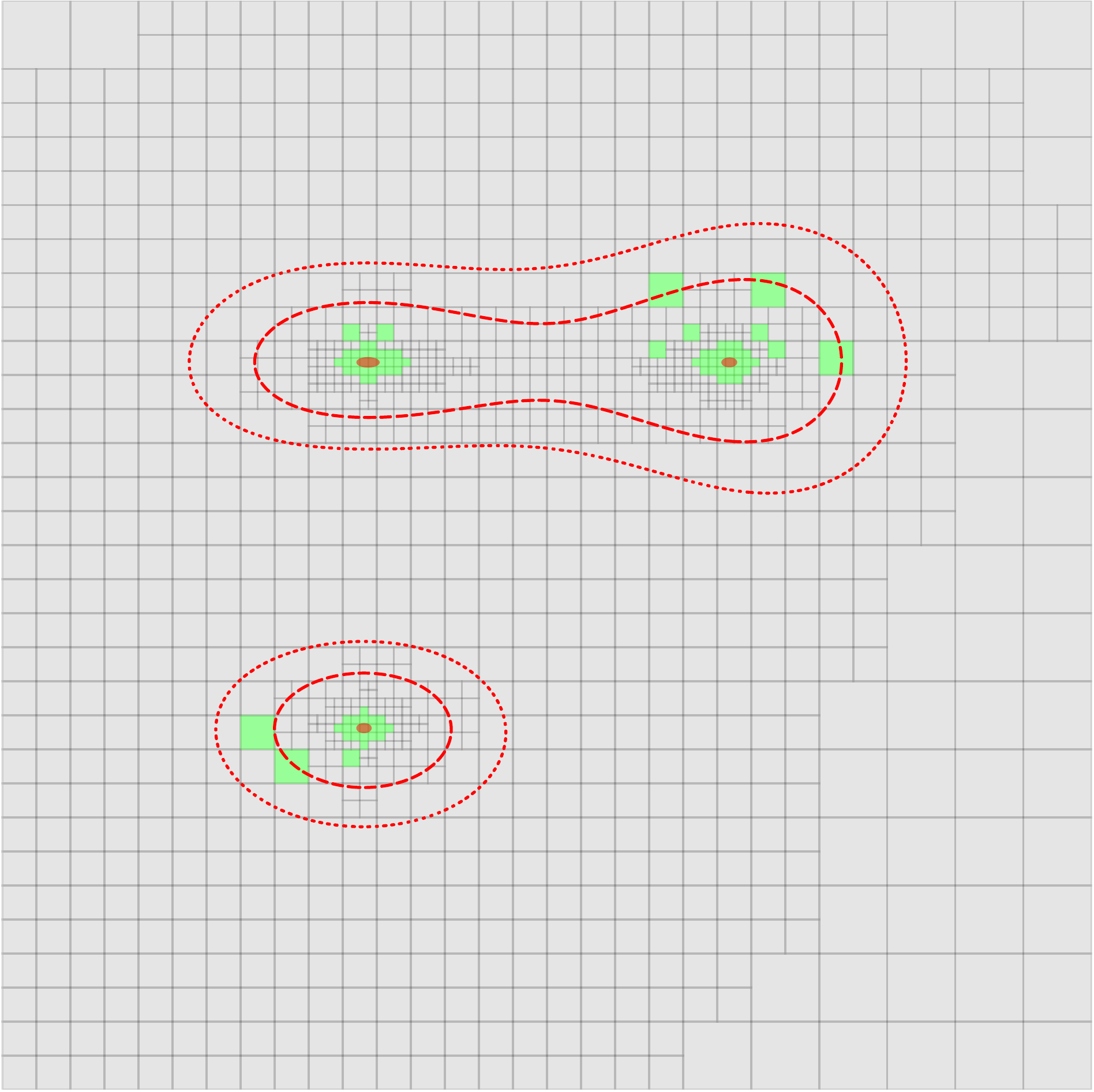}
      \caption{$|\calV_8|=1512$}
    \end{subfigure}
    \begin{subfigure}[b]{0.22\textwidth}
      \includegraphics[width=\textwidth]{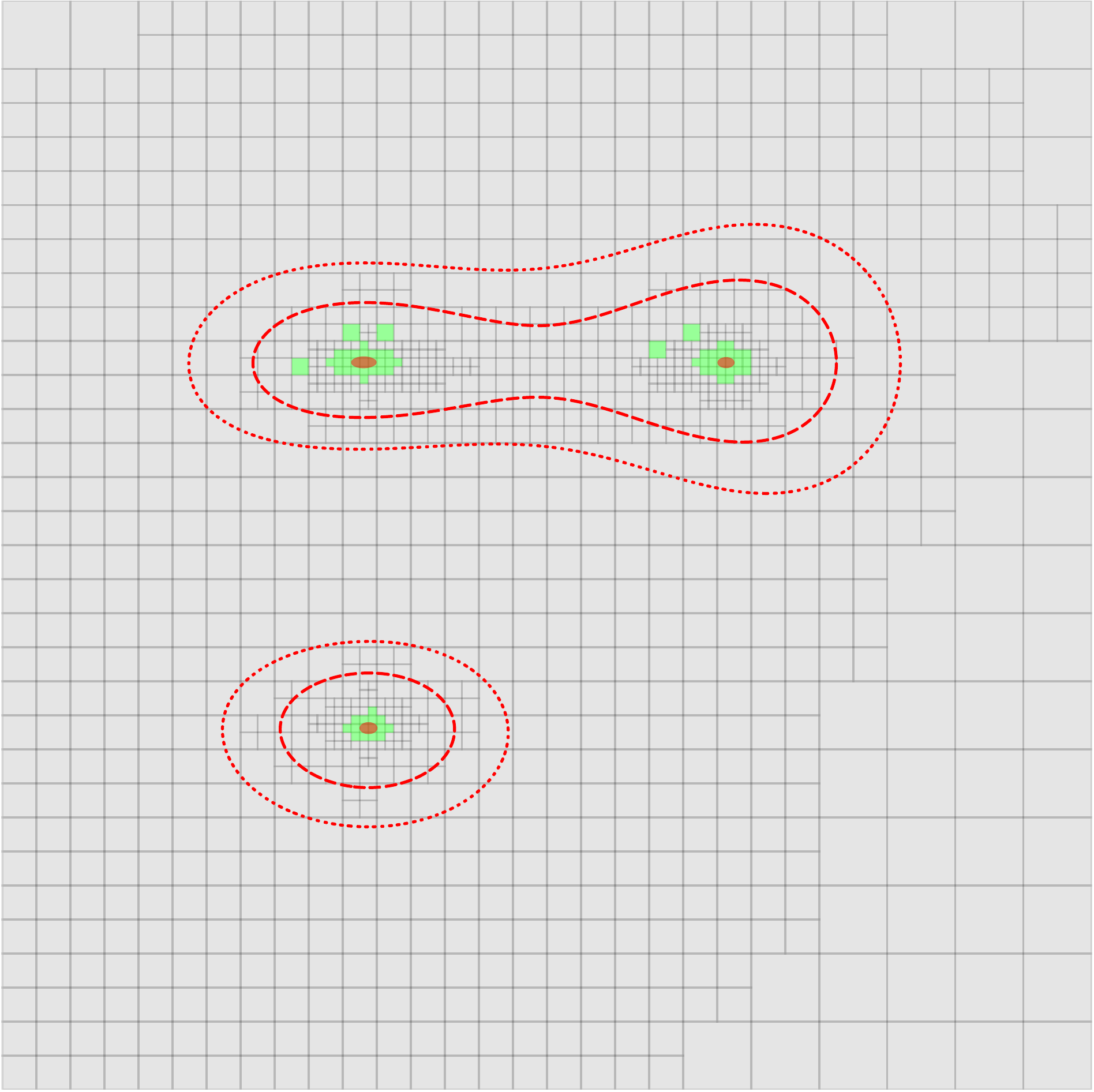}
      \caption{$|\calV_9|=1529$}
    \end{subfigure}
    \begin{subfigure}[b]{0.22\textwidth}
      \includegraphics[width=\textwidth]{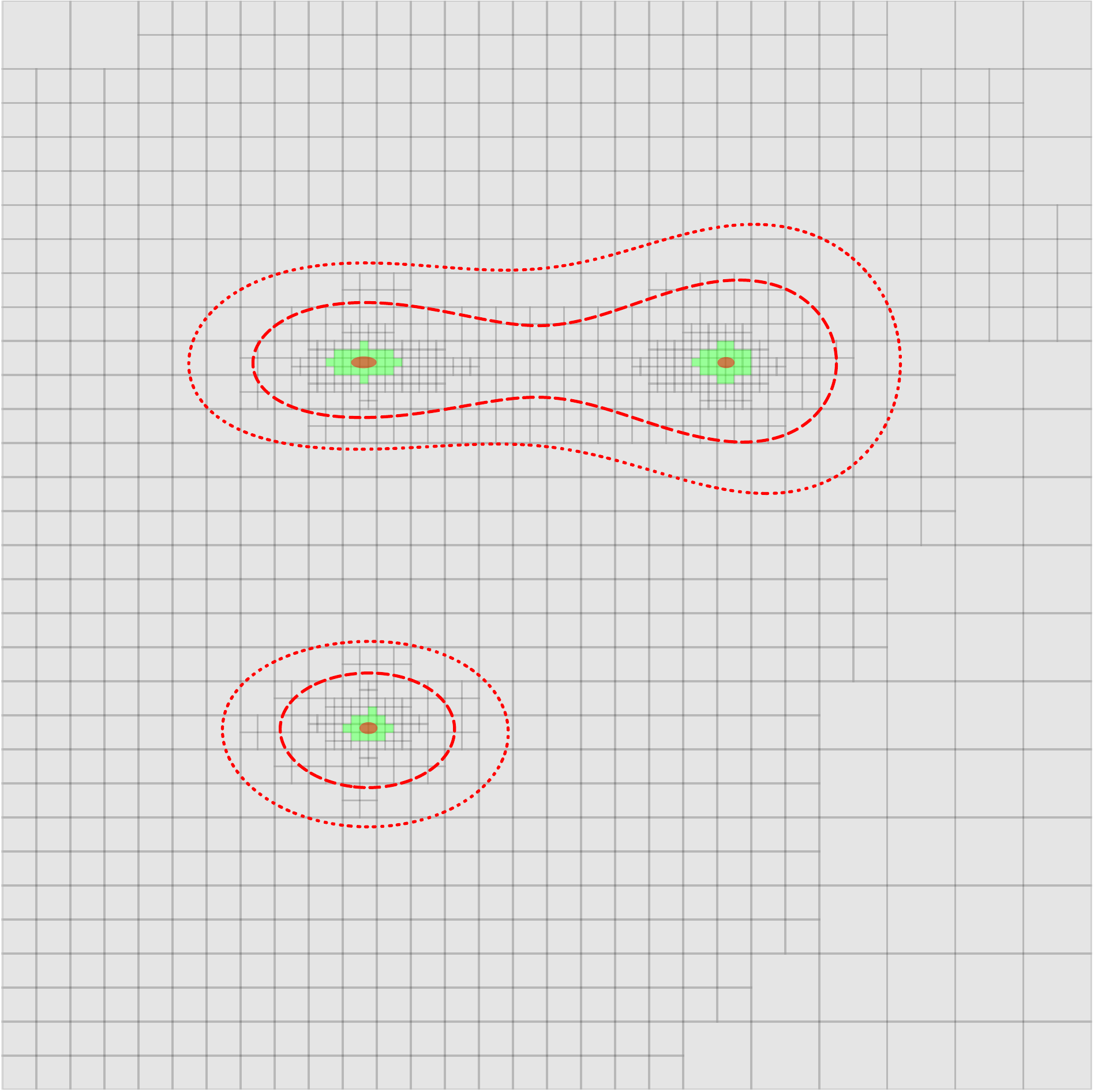}
      \caption{$|\calV_{10}|=1545$}
    \end{subfigure}
    \begin{subfigure}[b]{0.22\textwidth}
      \includegraphics[width=\textwidth]{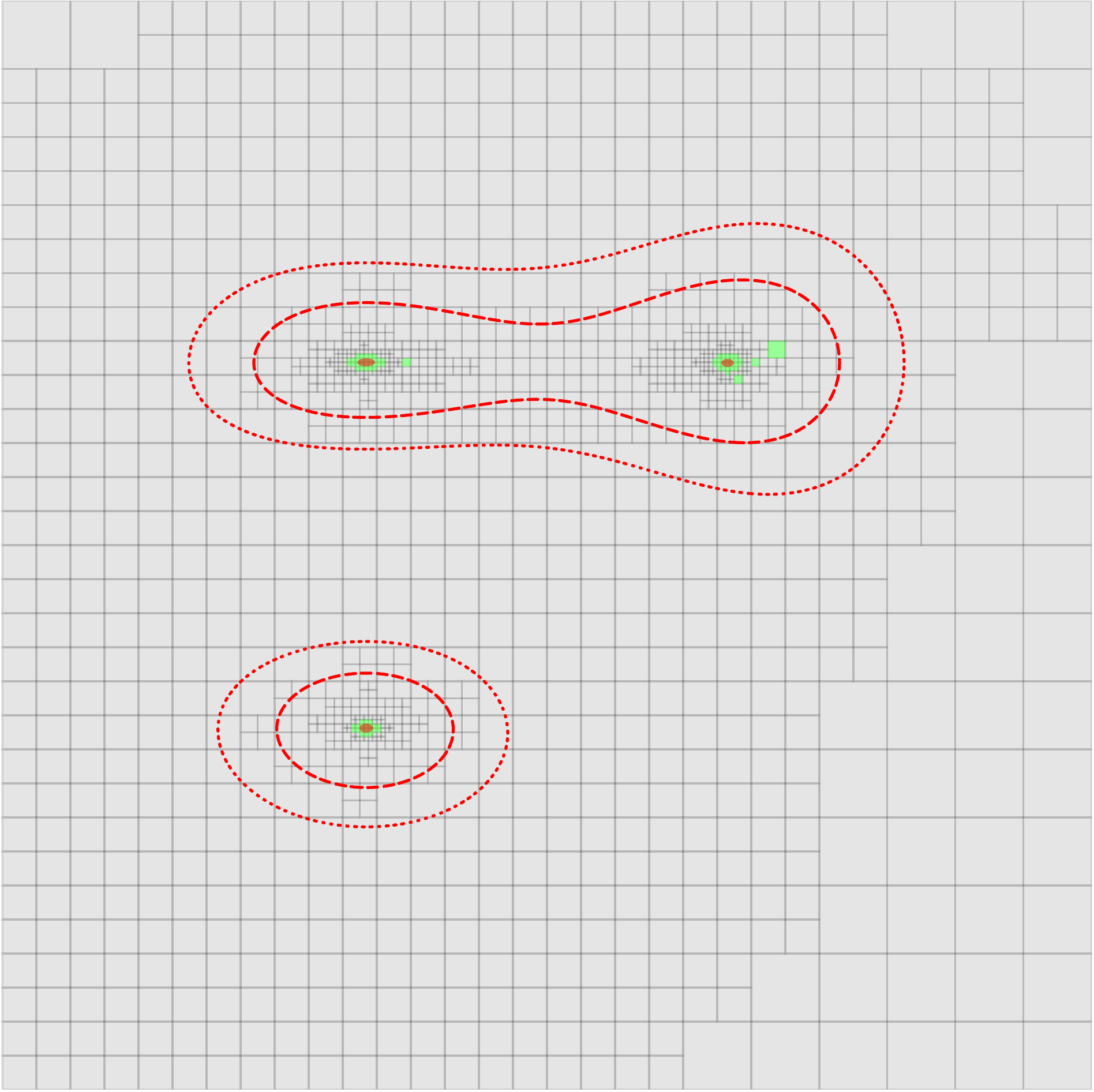}
      \caption{$|\calV_{11}|=1766$}
    \end{subfigure}      
    \begin{subfigure}[b]{0.22\textwidth}
      \includegraphics[width=\textwidth]{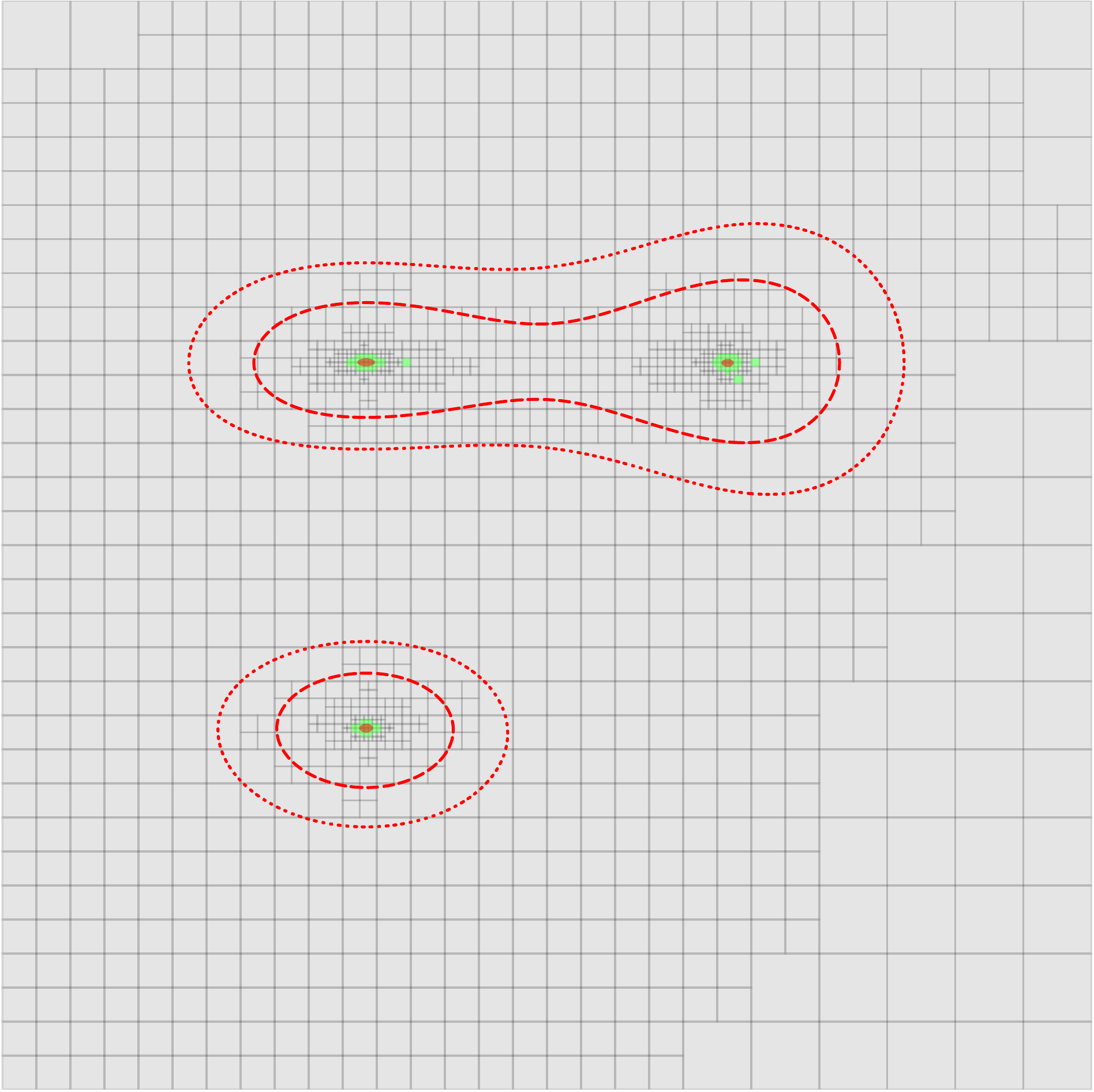}
      \caption{$|\calV_{12}|=1769$}
    \end{subfigure}
    \begin{subfigure}[b]{0.22\textwidth}
      \includegraphics[width=\textwidth]{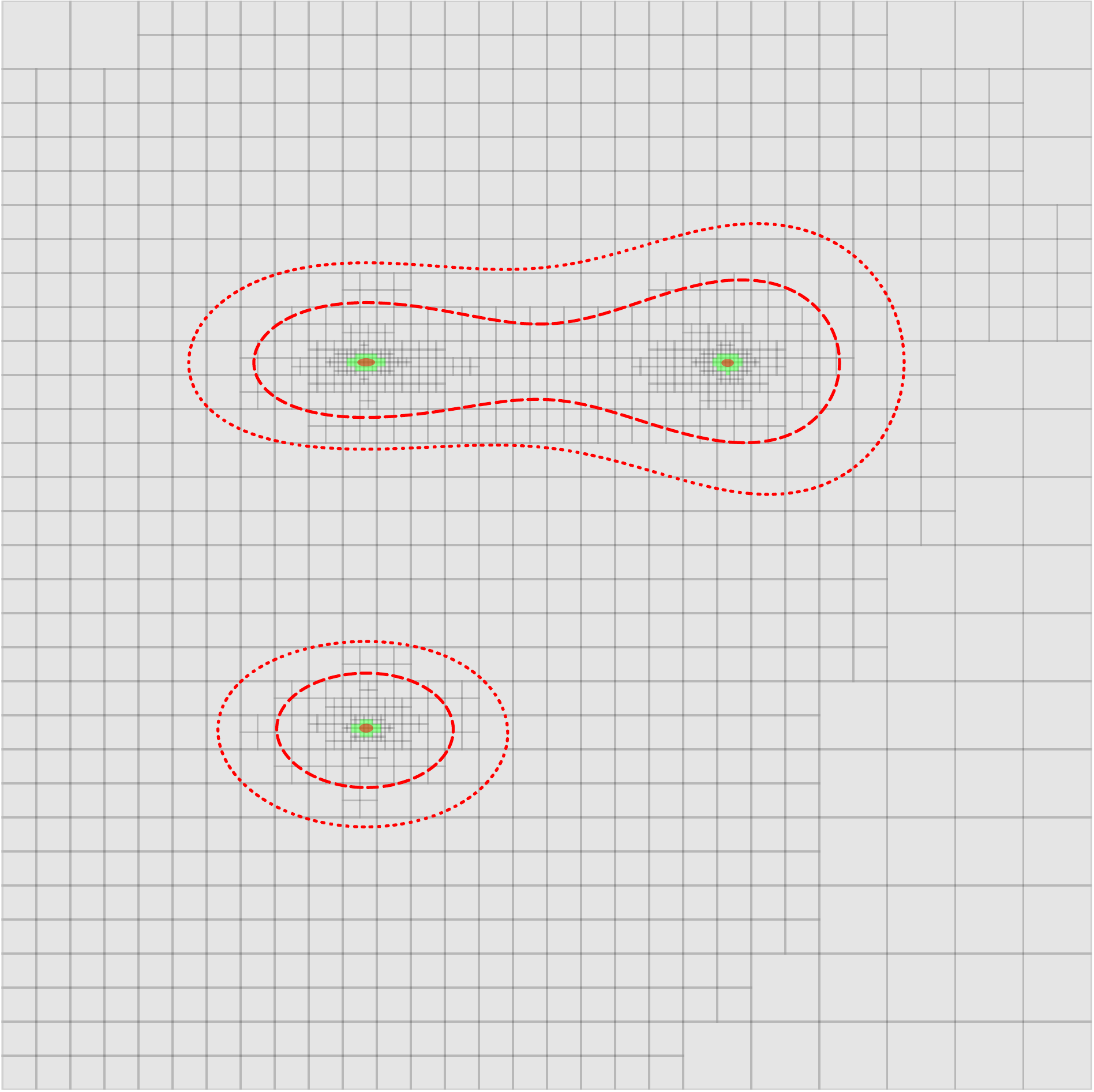}
      \caption{$|\calV_{13}|=1780$}
    \end{subfigure}
    \begin{subfigure}[b]{0.22\textwidth}
      \includegraphics[width=\textwidth]{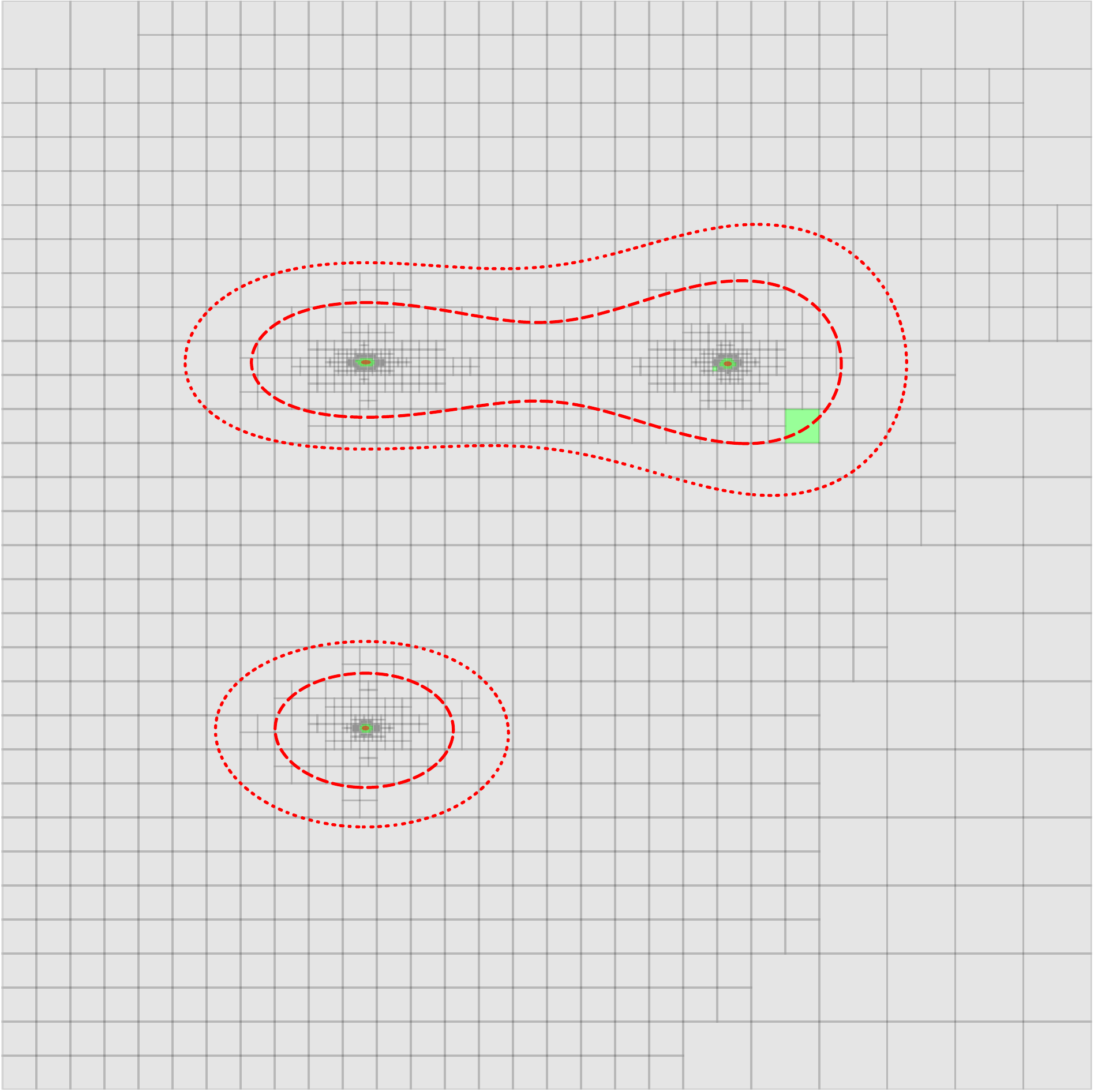}
      \caption{$|\calV_{14}|=2035$}
    \end{subfigure}
    \begin{subfigure}[b]{0.22\textwidth}
      \includegraphics[width=\textwidth]{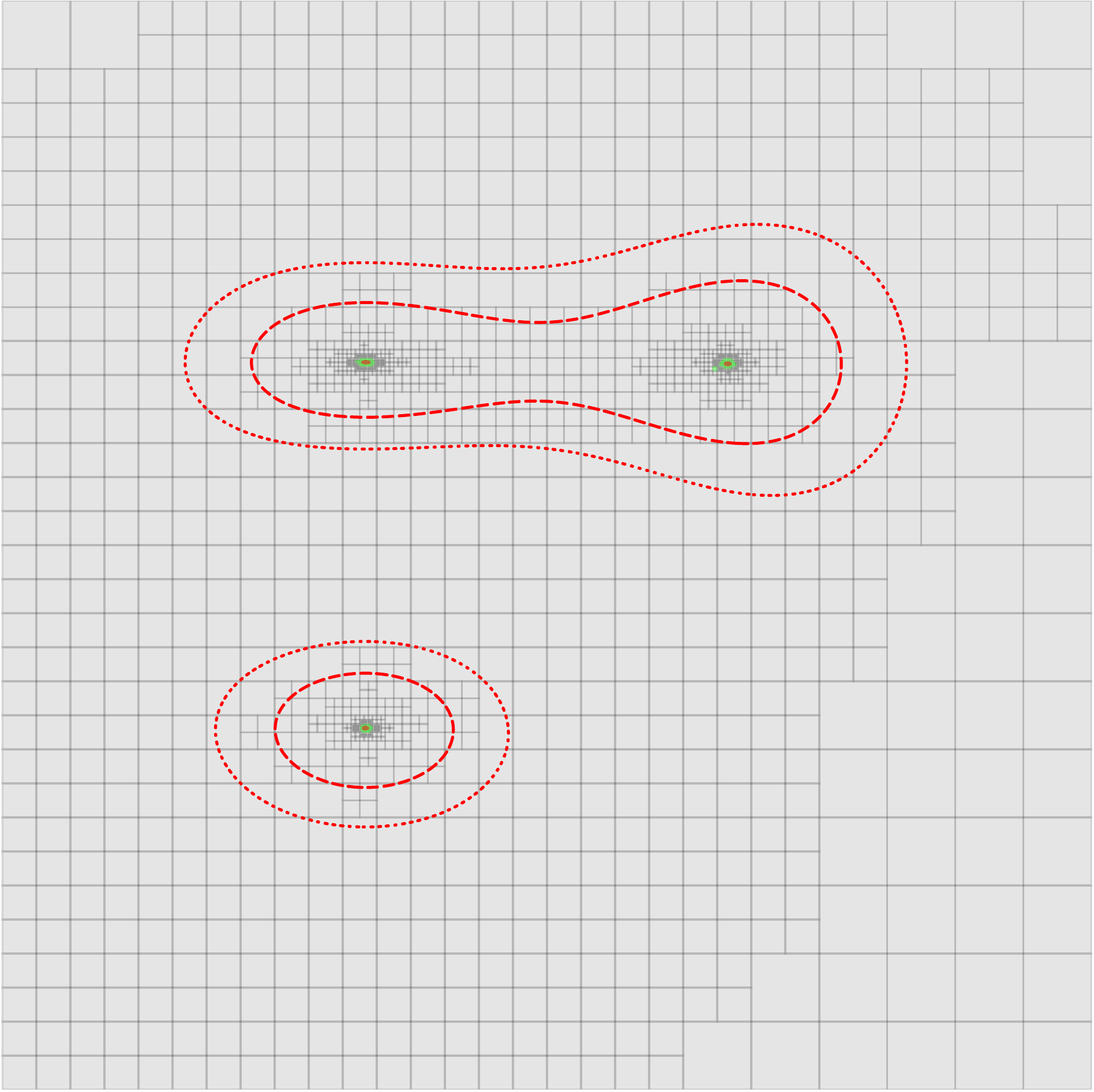}
      \caption{$|\calV_{15}|=2038$}
    \end{subfigure}
    \begin{subfigure}[b]{0.22\textwidth}
      \includegraphics[width=\textwidth]{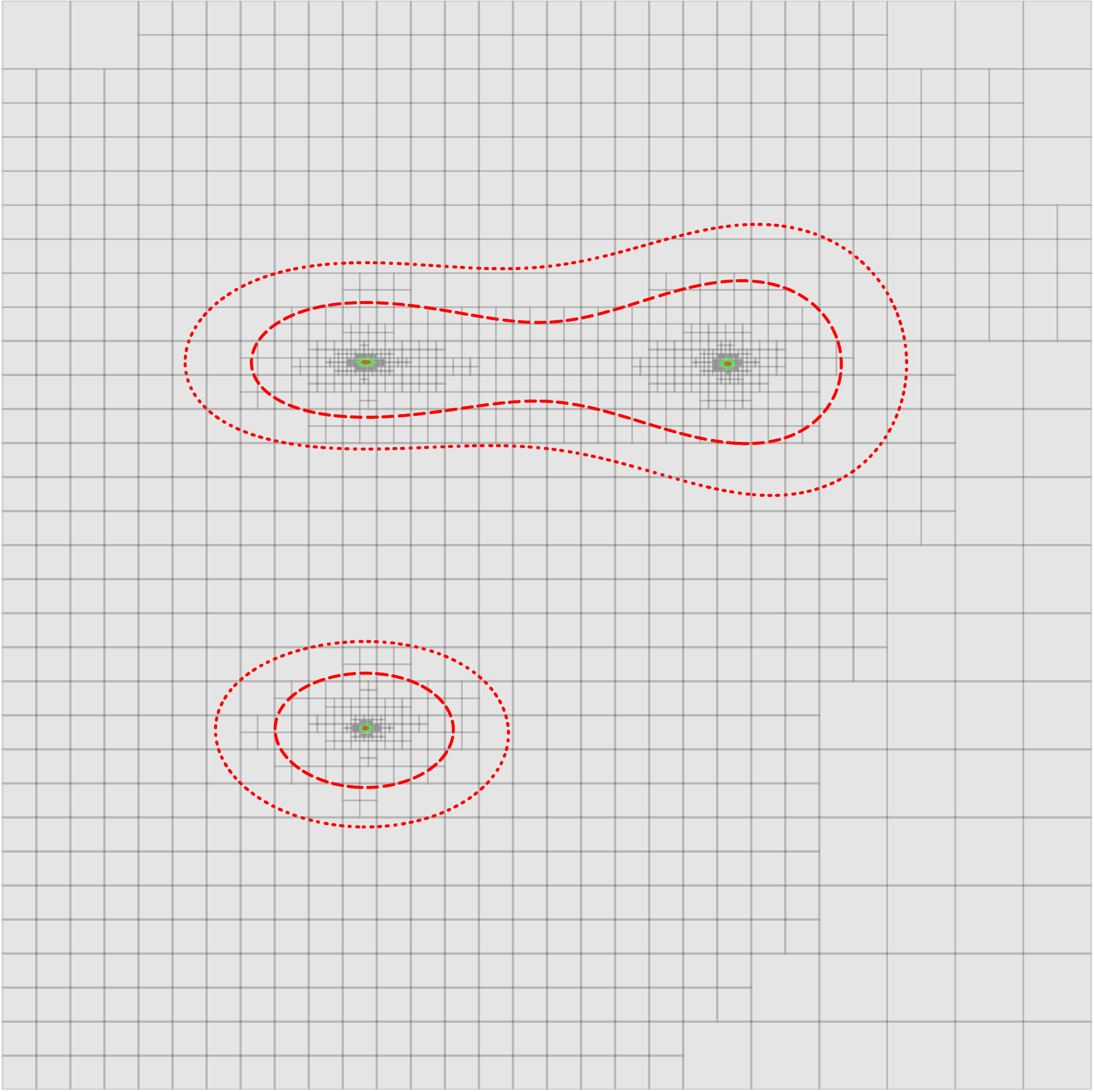}
      \caption{$|\calV_{16}|=2041$}
    \end{subfigure}
    \caption{The adaptive refinement algorithm's behavior on a 2D example for second-order selection rules with first order gradient. The behavior quadtrees are very similar to those in Figure~\ref{fig:super_resolution_2D}. \label{fig:super_resolution_2D_grad}}
  \end{figure}

\begin{table}[!ht]
\begin{subtable}[t]{0.45\textwidth}
\centering
\begin{tabular}[t]{lccccc}
\toprule
Iteration & $|\calV_k|$ & primal & $\distH(\calV_k, X^\star)$\\
\midrule
0 & 4 & 1.35942e+03 & 4.7e-01 \\
1 & 9 & 9.42990e+02 & 2.4e-01 \\
2 & 25 & 1.53313e+02 & 1.2e-01 \\
3 & 81 & 3.01429e+01 & 6.0e-02 \\
4 & 289 & 2.31285e+01 & 3.0e-02 \\
5 & 951 & 2.21082e+01 & 1.6e-02 \\
6 & 1210 & 2.19244e+01 & 7.7e-03 \\
7 & 1246 & 2.19244e+01 & 7.7e-03 \\
8 & 1512 & 2.18916e+01 & 4.6e-03 \\
9 & 1529 & 2.18955e+01 & 4.6e-03 \\
10 & 1545 & 2.18956e+01 & 4.6e-03 \\
11 & 1770 & 2.18836e+01 & 2.2e-03 \\
12 & 1773 & 2.18870e+01 & 2.2e-03 \\
13 & 1776 & 2.18870e+01 & 2.2e-03 \\
14 & 1787 & 2.18870e+01 & 2.2e-03 \\
15 & 2042 & 2.18795e+01 & 6.7e-04 \\
16 & 2045 & 2.18795e+01 & 6.7e-04 \\
17 & 2315 & 2.18778e+01 & 4.4e-04 \\
18 & 2647 & 2.18770e+01 & 2.7e-04 \\
19 & 3126 & 2.18766e+01 & 1.2e-04 \\
\bottomrule
\end{tabular}
\caption{Refinement rule with second-order bounds.\label{tab:super_resolution_2D_nograd}}
\end{subtable}
\hspace{\fill}
\begin{subtable}[t]{0.45\textwidth}
\centering
\begin{tabular}[t]{lccccc}
\toprule
Iteration & $|\calV_k|$ & primal & $\distH(\calV_k, X^\star)$\\
\midrule
0 & 4 & 1.35942e+03 & 4.7e-01 \\
1 & 9 & 9.42990e+02 & 2.4e-01 \\
2 & 25 & 1.53313e+02 & 1.2e-01 \\
3 & 81 & 3.01429e+01 & 6.0e-02 \\
4 & 289 & 2.31285e+01 & 3.0e-02 \\
5 & 951 & 2.21082e+01 & 1.6e-02 \\
6 & 1210 & 2.19244e+01 & 7.7e-03 \\
7 & 1246 & 2.19244e+01 & 7.7e-03 \\
8 & 1512 & 2.18916e+01 & 4.6e-03 \\
9 & 1529 & 2.18955e+01 & 4.6e-03 \\
10 & 1545 & 2.18956e+01 & 4.6e-03 \\
11 & 1766 & 2.18870e+01 & 2.2e-03 \\
12 & 1769 & 2.18870e+01 & 2.2e-03 \\
13 & 1780 & 2.18870e+01 & 2.2e-03 \\
14 & 2035 & 2.18795e+01 & 6.7e-04 \\
15 & 2038 & 2.18795e+01 & 6.7e-04 \\
16 & 2041 & 2.18795e+01 & 6.7e-04 \\
17 & 2318 & 2.18778e+01 & 4.4e-04 \\
18 & 2623 & 2.18770e+01 & 2.7e-04 \\
19 & 3007 & 2.18766e+01 & 1.2e-04 \\
\bottomrule
\end{tabular}
\caption{Refinement rule with second-order upper bounds and gradient lower bound.\label{tab:super_resolution_2D_grad}}
\end{subtable}
\caption{The adaptive refinement Algorithm's behavior for the 2D super-resolution problem.\label{tab:super_resolution_2D}}
\end{table}%


\section{Perspectives}

In this work, we proposed an alternative to the Frank-Wolfe algorithm for infinite dimensional total variation regularization. 
This adaptive refinement approach has a significant advantage: it does not require to search for the maximizers of a non-convex function at each iteration. Instead, it progressively discards regions of the space, in a certified manner, resembling a branch-and-bound approach. The only prerequisite to implement it is the computation of upper bounds on the largest eigenvalues of the measurement functions Hessians.
We proved that the method has great adaptivity properties. It converges generically under weak assumptions, and  its rate of convergence is linear under stronger regularity assumptions. 
To the best of our knowledge, this is as good as the best existing results for the Frank-Wolfe algorithm, and we cannot expect more from this dichotomic approach.

Despite these assets, some parts of the algorithm still require some analysis.
In particular, the solution of a finite dimensional convex problem needs to be computed at each iteration. 
In this work, we assumed that this could be achieved with an arbitrary accuracy. 
A complete theory should account for approximation errors and for the complexity of the sub-problems. 

On a more positive side, the scope of this approach is possibly significantly wider than total variation regularization. Up to some adjustements, we believe that the method could be extended to more general sparse measure optimization problems. In particular, we think of other regularizers that promote sparse solutions, such as problems defined over the cone of nonnegative measures, or over the set probability measures.

\appendix

\section{Proofs}

Here, we include the proofs ommited in the main text. We begin by proving the main results, i.e. the generic (Theorem \ref{th:generalConv}) and linear (Theorem \ref{th:linConv}) convergence results, and save the proofs of smaller, technical propositions to the end.

\subsection{Proof of Theorem \eqref{th:generalConv}\label{sec:proof_generic_convergence}}

The proof of Theorem \ref{th:generalConv} builds on similar arguments as the ones in the companion paper \cite[Thm 3.1]{FGW2019ExchangeI}, with some modifications related to the discretization and the fact that the assumptions have been slightly weakened. 

\begin{proof}[Proof of Theorem \ref{th:generalConv}]
\begin{enumerate}[label=\roman*)]
        \item \emph{Well-posedness} Under Assumptions \ref{ass:weakReg}, \ref{ass:A}, \ref{ass:coercivity} and \ref{ass:wellposedness}, we can apply Proposition \ref{prop:duality} to ensure the existence of the primal-dual pair $(\mu_0,q_0)$. For the next iterates $k\geq 1$, the measure $\mu$ in Assumption \ref{ass:wellposedness} still satisfies $\mu\in \calM(\calV_k)$ by nestedness of the sequence $(\calV_k)_{k\in \N}$. Hence we can apply Proposition \ref{prop:duality} again. 

        \item \emph{Existence of the limit of the primal solutions.} First remark that the sequence $(J(\mu_k))_{k \in \N}$ is non-increasing since the sets $\calV_k$ are nested. 
        Since $J$ is coercive, the sequence $(\norm{\mu_k}_{\calM(\Omega)})_{k\in \N}$ is bounded. Hence there exists a subsequence $(\mu_k)_{k\in \N}$, which we do not relabel, that weak-$*$ converges towards a measure $\mu_\infty$. 



\item \emph{Existence of the limit of the dual solutions.} 

Let $C_k \eqdef \left\{q\in \R^M, \|A^*q\|_{L^\infty(\calV_k)}\leq 1 \right\}$. 
The fact that $\calV_{k+1}\supseteq \calV_k$ implies that $C_{k+1}\subseteq C_k$.
Therefore any solution $q_k$ of $(\calD(\calV_k))$ belongs to $C_0$.
By Assumption \ref{ass:wellposedness}, $f^*$ is coercive on $C_0$ (see the proof \ref{prop:duality}).
Moreover, the sequence $(f^*(q_k))_{k\in \N}$ is nondecreasing, bounded above by $f^*(q^\star)$. 
Therefore all the vectors $q_k$ belong to the level set $\{q\in \R^M, f^*(q)\leq f^*(q^\star)\}$, which is bounded by coercivity of $f^*$. Up to a subsequence, $(q_k)_{k\in \N}$ converges to a limit point $q_\infty$.

\item \emph{Equicontinuity of $(A^*q_k)_{k\in \N}$}.
As another technical lemma, we prove that the set $(A^*q_k)_{k\in \N}$ is equicontinuous. 
Let $\epsilon>0$ be arbitrary. 
Since the functions $a_m \in \calC_0(\Omega)$ all are uniformly continuous, there exists a $\delta>0$ with the property
\begin{align*}
    \norm{x-y}_2 < \delta \, \Rightarrow \, \abs{a_m(x)-a_m(y)} < \frac{\epsilon}{\sup_{k} \norm{q_k}_1} \text{ for all } m.
\end{align*}
Consequently,
\begin{align}
 \norm{x-y}_2 < \delta \, \Rightarrow \, \abs{(A^*q_k)(x)- (A^*q_k)(y)} &= \abs{\sum_{m=1}^M (a_m(x)-a_m(y))q_k(m)} \leq \sum_{m=1}^M \abs{a_m(x)-a_m(y)}\abs{q_k(m)} \nonumber \\
 &< \frac{\epsilon}{\sup_{k} \norm{q_k}_1} \sum_{m=1}^M \abs{q_k(m)} \leq \epsilon. \label{eq:uniformcont}
\end{align}

    \item \emph{Feasibility of $q_\infty$.} Due to the convergence of $(q_k)_{k \in \N}$, the sequence $(A^*q_k)_{k \in \N}$ is converging strongly to $A^*q_\infty$. We will now prove that $\|A^*q_\infty\|_{L^\infty(\Omega)} \leq 1$. 
Towards a contradiction, assume that $\norm{A^*q_\infty}_{L^\infty(\Omega)} =1 + 2\epsilon$ for an $\epsilon>0$. By convergence of $(A^*q_k)$, we can conclude that there exists $k_0\in \N$ such that for $k\geq k_0$,  $\norm{A^*q_k}_{L^\infty(\Omega)} \geq 1 + \epsilon$. 
Set $\delta$ as in \eqref{eq:uniformcont}.
The set $X_k$ is  not empty and there exists a cell $\omega$ that contains a point of $X_k$ and  satisfies
$\norm{A^*q_k}_{L^\infty(\omega)} \geq 1 + \epsilon$. By Assumption~\ref{ass:generic_refinement} this cell belongs to $ \Omega_k^\star$.
It must further satisfy $\diam(\omega)\geq 2\delta$. If not, all points in $x$ have a distance to $\calV_k$ smaller than $\delta$. Since we have $|A^*q_k|(x)\leq 1$ for $x\in \vertx(\omega)$, the equicontinuity of the $A^*q_k$ implie that $\abs{A^*q_k(x)}\leq 1+\epsilon$ for all $x\in \omega$, which is a contradiction. Hence, for all $k\geq k_0$, there exists $\omega \in \Omega_k^\star$ such that $\diam(\omega)\geq 2\delta$. 
Let $(\omega_k)_{k\in \N}$ denote a sequence of refined cells in $\Omega_k$. Since we pick the active cells of largest diameter, we must have $\diam(\omega_k)\geq 2\delta$ for all $k\geq k_0$. Since all the $\omega_k$'s belong to a compact set $\Omega$, there is a finite number of cells with diameter larger than $2\delta$. Hence, we can extract a subsequence of $(\omega_k)$ which is constant. This is a contradiction, because the cells $(\omega_k)$ are refined and cannot appear twice.
    \item \emph{Convergence to a solution}. Overall, we proved that the primal-dual pair $(\mu_\infty,q_\infty)$ is feasible. It remains to prove that it is actually a solution. Here, we reproduce the argument of \cite{FGW2019ExchangeI} for completeness. Let us first remark that $\norm{\mu_\infty}_\calM + f(A\mu_\infty) \geq - f^*(q_\infty)$ by weak duality. To prove the second inequality, first notice that the weak-$*$-continuity of $A$ implies that $A\mu_k \to A\mu_\infty$. Assumption \ref{ass:weakReg} furthermore implies that $f$ is lower semi-continuous. As a supremum of linear functions, so is $f^*$. Since also $q_k \to q_\infty$, we conclude
\begin{align*}
    f^*(q_\infty) + f(A\mu_\infty) \leq \liminf_{k \to \infty} f^*(q_k) + f(A\mu_k).
\end{align*}
Assumptions \ref{ass:weakReg}, \ref{ass:A} together with Proposition \ref{prop:duality} imply exact duality of the discretized problems. This means $f^*(q_k) + f(A\mu_k) = -\norm{\mu_k}_\calM$. Since the norm is weak-$*$-l.s.c. , we thus obtain
\begin{align*}
    \liminf_{k \to \infty} f^*(q_k) + f(A\mu_k) = \liminf_{k \to \infty}  - \norm{\mu_k}_\calM \leq - \liminf_{k \to \infty} \norm{\mu_k}_\calM \leq -\norm{\mu_\infty}_\calM.
\end{align*}
Reshuffling these inequalities yields $\norm{\mu_\infty}_\calM + f(A\mu_\infty) \leq - f^*(q_\infty)$, i.e., the reverse inequality.
Thus, $\mu_\infty$ and $q_\infty$ fulfill the duality conditions, and are solutions. The final claim follows from a standard subsequence argument.
\end{enumerate}    
\end{proof}

\subsection{Proof of Theorem \eqref{th:linConv}\label{sec:proof_linear_convergence}}


In this section, we prove the main theoretical result of the paper, which is Theorem \eqref{th:linConv}.
To proceed to the final result, we begin by proving a set of intermediary results. 
The first one is a list of useful inequalities. Many of them are direct adaptions from \cite{FGW2019ExchangeI}. 
\begin{proposition}
    The following inequalities hold under Assumption \ref{ass:regularity}:
    \begin{align}
        &\|q_k - q^\star\|_2  \lesssim \distH(\calV_k | X_k) \label{eq:ineq1}\\
        &\|q_k - q^\star\|_2^2  \lesssim \max\left( \distH(X_k | X^\star), \distH(\calV_k | X^\star) \right)\cdot\distH(\calV_k | X^\star) \label{eq:ineq2} 
    \end{align}
\end{proposition}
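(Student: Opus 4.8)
```latex
\textbf{Proof plan.}

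The plan is to establish both inequalities via the primal-dual relationships of Proposition \ref{prop:duality}, exploiting the strong convexity-type structure that the regularity hypotheses in Assumption \ref{ass:regularity} confer on the dual objective, together with the quadratic decay of the dual certificate near $X^\star$ encoded in Assumption \ref{ass:nondegenerate}. Throughout, the guiding principle is that $q_k$ solves $\calD(\calV_k)$, so $A^*q_k$ is constrained by $1$ only on the vertices $\calV_k$, whereas $q^\star$ solves $\calD(\Omega)$; the gap between these two feasibility regions is measured precisely by the Hausdorff distances appearing on the right-hand sides.

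For \eqref{eq:ineq1}, I would start from the fact that $-f^*$ is the dual objective and that, since $f$ has an $L$-Lipschitz gradient (Assumption \ref{ass:regularity}), its conjugate $f^*$ is $1/L$-strongly convex on its domain. Strong convexity gives a lower bound of the form $\|q_k-q^\star\|_2^2 \lesssim (-f^*(q^\star)) - (-f^*(q_k)) - \langle \text{subgradient term}\rangle$, i.e. the dual suboptimality of $q_k$ controls the squared distance to $q^\star$. The key step is then to bound this dual suboptimality by $\distH(\calV_k | X_k)$: the obstruction to $q_k$ being globally optimal is that some local maximizers in $X_k$ of $|A^*q_k|$ are not pinned down to value $1$ because they lie away from $\calV_k$. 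A first-order Taylor expansion of $A^*q_k$ around the nearest vertex, controlled by the uniform bound $\kappa_1$ (hence by the uniform boundedness of $(q_k)$ granted through Assumption \ref{ass:coercivity} and Theorem \ref{th:generalConv}), converts the spatial gap $\distH(\calV_k|X_k)$ into a constraint-violation magnitude, which in turn bounds the suboptimality. Matching powers then yields \eqref{eq:ineq1}. I would import the precise linear-algebraic estimates from \cite{FGW2019ExchangeI} here rather than redo them.

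For \eqref{eq:ineq2}, the extra factor comes from the strict concavity of the certificate near the true support. The plan is to expand $A^*q^\star$ to second order around each $x_s^\star$: by \eqref{eq:reg} the Hessian satisfies $\sgn(A^*q^\star)(A^*q^\star)'' \preccurlyeq -\gamma\,\Id$ near $X^\star$, so $1 - |A^*q^\star(x)| \gtrsim \dist(x,X^\star)^2$ on $B(x_s^\star,R)$, with the complementary bound \eqref{eq:reg2} outside. Evaluating this at the vertices in $\calV_k$ and at the points of $X_k$, and combining with the primal-dual optimality $|A^*q_k|=1$ on $\supp \mu_k \subset \calV_k$, produces a quantity controlled by $\distH(\calV_k|X^\star)^2$ and by a cross-term that accounts for the displacement of the detected maximizers $X_k$ relative to $X^\star$, explaining the $\max(\distH(X_k|X^\star),\distH(\calV_k|X^\star))$ factor. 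Plugging this quadratic lower bound back into the strong-convexity estimate from the first part upgrades the linear $\|q_k-q^\star\|_2$ control into the quadratic $\|q_k-q^\star\|_2^2$ bound.

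The main obstacle I anticipate is the bookkeeping in the second inequality: correctly pairing each local maximizer in $X_k$ with the appropriate true location $x_s^\star$ and with the nearest vertex in $\calV_k$, so that the triangle inequality for set distances (Proposition \ref{prop:triangle_set}) can be applied without losing the quadratic order. In particular one must ensure that the second-order Taylor remainders are dominated by the leading $\gamma$-concavity term, which requires $k$ large enough that all active cells already lie inside the radius-$R$ neighborhoods of $X^\star$ — precisely the ``burn-in'' phenomenon invoked in the sketch of Theorem \ref{th:linConv}. Making that reduction rigorous, and tracking the uniform constants so they depend only on $A$, $f$, $\mu^\star$ and not on $k$, is where the bulk of the technical care lies; the rest follows from convex-duality estimates that are essentially adaptations of \cite{FGW2019ExchangeI}.
```
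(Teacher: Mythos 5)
The paper does not actually prove this proposition: it simply cites Lemmas 3.5 and 3.6 of the companion paper \cite{FGW2019ExchangeI}, so your reconstruction must be judged on its own merits. Your overall frame --- strong convexity of $f^*$ (from the $L$-Lipschitz gradient of $f$) turning dual suboptimality into $\|q_k-q^\star\|_2^2$, and geometric gaps turning into constraint violations --- is the right one, but there is a concrete power-counting gap in your argument for \eqref{eq:ineq1}. A first-order Taylor expansion controlled by $\kappa_1$ bounds the constraint violation $\|A^*q_k\|_{L^\infty(\Omega)}-1$ only by $\kappa_1\|q_k\|_2\,\distH(\calV_k|X_k)$, i.e.\ linearly in the distance; feeding this into the strong-convexity estimate $\|q_k-q^\star\|_2^2\lesssim(\text{violation})$ yields $\|q_k-q^\star\|_2\lesssim\distH(\calV_k|X_k)^{1/2}$, not \eqref{eq:ineq1}. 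The missing idea is that the points of $X_k$ are critical points of $A^*q_k$: expanding $A^*q_k$ to second order around $x\in X_k$ and evaluating at the nearest vertex $v\in\calV_k$ (where $|A^*q_k(v)|\leq 1$), the first-order term vanishes and the violation is $\lesssim\kappa_2\|q_k\|_2\,\distH(\calV_k|X_k)^2$; only then do the powers match.

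For \eqref{eq:ineq2} your mechanism is also off. You derive the quadratic gain from the $\gamma$-concavity of $A^*q^\star$ near $X^\star$ (Assumption \ref{ass:nondegenerate}) together with a burn-in ``$k$ large enough'', but the proposition is stated under Assumption \ref{ass:regularity} alone and for every $k$; the non-degeneracy and the $k\geq k_0$ reductions belong to Proposition \ref{prop:inequalities}, not here. The route consistent with the stated hypotheses bounds the dual suboptimality by testing at the true support, $f^*(q^\star)-f^*(q_k)\lesssim\sum_s|\alpha_s^\star|\left(|A^*q_k(x_s^\star)|-1\right)_+$, then expands $A^*q_k$ around the vertex $v$ nearest to $x_s^\star$ and controls the cross term $\|(A^*q_k)'(v)\|_2$ by $\kappa_2\|q_k\|_2$ times the distance from $v$ to the critical set $X_k$ (where the gradient vanishes); the product of these two distances is exactly what produces $\max\left(\distH(X_k|X^\star),\distH(\calV_k|X^\star)\right)\cdot\distH(\calV_k|X^\star)$. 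The concavity of the limiting certificate $A^*q^\star$ plays no role in either inequality.
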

\begin{proof}
All those inequalities come directly from \cite{FGW2019ExchangeI}. 
The first inequality \eqref{eq:ineq1} comes from Lemma 3.5, the second \eqref{eq:ineq2} from Lemma 3.6.
\end{proof}

\begin{proposition}[A list of useful inequalities\label{prop:inequalities}]
Under Assumptions \ref{ass:regularity} and \ref{ass:nondegenerate}, there exists a $k_0\in \N$ with the property that for $k\geq k_0$, the  following inequalities are true:
    \begin{align}
        \distH(X^\star | X_k) & \lesssim \|q_k - q^\star\|_2 \label{eq:ineq4}\\
         \distH(X_k | X^\star) & = \distH(X^\star | X_k) \label{eq:ineq3}\\
        \distH(X_k | X^\star) &\lesssim \distH(\calV_k | X_k) \label{eq:ineq5} \\
        \distH(\calV_k | X_k) &\asymp \distH(\calV_k | X^\star) \label{eq:ineq6} \\
        \|q_k - q^\star\|_2 & \lesssim \distH(\calV_k | X^\star) \label{eq:ineq7} \\ 
        f(\mu_k)-f(\mu^\star) & \lesssim \distH(\calV_k | X^\star)^2. \label{eq:ineq9}
    \end{align}
\end{proposition}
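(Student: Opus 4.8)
The plan is to first invoke the generic convergence result (Theorem \ref{th:generalConv}), so that $q_k \to q^\star$ and we may fix a stage $k_0$ beyond which $\norm{q_k-q^\star}_2$ is as small as needed; every estimate below is asymptotic in this sense. The cornerstone is \eqref{eq:ineq4}, which I would prove directly from the non-degenerate source condition by a two-region argument. First, for $x$ with $\dist(x,X^\star)\ge R$ we have $\abs{A^*q^\star(x)}\le 1-\gamma R^2/2$ by \eqref{eq:reg2}, so if $\abs{A^*q_k(x)}\ge 1$ then $\abs{A^*q_k(x)}-\abs{A^*q^\star(x)}\ge \gamma R^2/2$; since this difference is at most $\norm{A^*(q_k-q^\star)}_{L^\infty(\Omega)}\lesssim \norm{q_k-q^\star}_2$, once $k\ge k_0$ no point of $X_k$ can lie outside $\bigcup_s B(x_s^\star,R)$. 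Second, for $x\in X_k$ near some $x_s^\star$, local maximality gives $(A^*q_k)'(x)=0$; writing $(A^*q^\star)'(x)-(A^*q^\star)'(x_s^\star)=(A^*q^\star)''(\xi)(x-x_s^\star)$ and using that the sign-adjusted Hessian is $\matleq -\gamma\Id$ on $B(x_s^\star,R)$ (hence invertible with norm $\gtrsim\gamma$), while $(A^*q^\star)'(x)=-(A^*(q_k-q^\star))'(x)$ has norm $\lesssim\norm{q_k-q^\star}_2$, yields $\norm{x-x_s^\star}_2\lesssim \norm{q_k-q^\star}_2$. Taking the supremum over $x\in X_k$ gives \eqref{eq:ineq4}.

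The delicate step, and the one I expect to be the main obstacle, is the \emph{equality} \eqref{eq:ineq3}. The idea is to upgrade the previous estimate to a bijection between $X_k$ and $X^\star$ for $k\ge k_0$, with exactly one local maximizer of $\abs{A^*q_k}$ in each ball $B(x_s^\star,R)$. Uniqueness follows from strict concavity: since $\norm{q_k-q^\star}_2$ is small and $\kappa_2$ controls the perturbation of the Hessian, one gets $\sgn(A^*q^\star)(A^*q_k)''\matleq -\tfrac{\gamma}{2}\Id$ on $B(x_s^\star,R)$, so $\abs{A^*q_k}$ is strictly concave there and has at most one critical point. Existence of a critical point near $x_s^\star$ is an implicit-function-theorem perturbation of the non-degenerate zero of $(A^*q^\star)'$ at $x_s^\star$; the genuinely delicate point is that this critical point must attain a value $\ge 1$ in order to belong to $X_k$. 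For this I would invoke the dual optimality of $q_k$ and complementary slackness for the discretized problem: $\supp(\mu_k)\subseteq\set{v\in\calV_k:\abs{A^*q_k(v)}=1}$ and $\mu_k\wto\mu^\star$, so each $x_s^\star$ has a nearby vertex where $\abs{A^*q_k}$ equals $1$, forcing the adjacent local maximum to be at least $1$. Once the bijection is in place and the perturbation is small relative to the separation $R$, every point of one set realizes its distance to the other through its unique partner, and the two suprema defining $\distH(X_k|X^\star)$ and $\distH(X^\star|X_k)$ coincide.

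With \eqref{eq:ineq4} and \eqref{eq:ineq3} in hand, the remaining bounds are bookkeeping built on the triangle inequality for the asymmetric Hausdorff distance, $\distH(X_1|X_3)\le \distH(X_1|X_2)+\distH(X_2|X_3)$, and the imported estimates \eqref{eq:ineq1}, \eqref{eq:ineq2}. For \eqref{eq:ineq5} I would chain \eqref{eq:ineq3}, \eqref{eq:ineq4} and \eqref{eq:ineq1}, namely $\distH(X_k|X^\star)=\distH(X^\star|X_k)\lesssim\norm{q_k-q^\star}_2\lesssim\distH(\calV_k|X_k)$. Writing $a=\distH(\calV_k|X^\star)$, $b=\distH(\calV_k|X_k)$, $c=\distH(X_k|X^\star)$, the triangle inequality gives $a\le b+c$ and $b\le a+c$; the direction $a\lesssim b$ of \eqref{eq:ineq6} is immediate from \eqref{eq:ineq5} (i.e. $c\lesssim b$), while for $b\lesssim a$ I would combine \eqref{eq:ineq4} and \eqref{eq:ineq2} into $c^2\lesssim\norm{q_k-q^\star}_2^2\lesssim\max(c,a)\,a\le ca+a^2$, from which $c\lesssim a$ and hence $b\le a+c\lesssim a$. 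Finally \eqref{eq:ineq7} reads $\norm{q_k-q^\star}_2\lesssim\distH(\calV_k|X_k)\asymp\distH(\calV_k|X^\star)$ by \eqref{eq:ineq1} and \eqref{eq:ineq6}.

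It remains to prove the quadratic primal estimate \eqref{eq:ineq9}, understood as the objective gap $J(\mu_k)-J(\mu^\star)$ (the literal expression $f(\mu_k)$ being a typo, since $f$ acts on $\R^M$). Here I would use a rounding competitor: let $\tilde x_s\in\calV_k$ be a nearest vertex to $x_s^\star$, so $\norm{\tilde x_s-x_s^\star}_2\le \distH(\calV_k|X^\star)$, and set $\tilde\mu=\sum_s\alpha_s^\star\delta_{\tilde x_s}\in\calM(\calV_k)$. Since $\tilde\mu$ and $\mu^\star$ carry the same weights, $\norm{\tilde\mu}_\calM=\norm{\mu^\star}_\calM$, so $J(\mu_k)-J(\mu^\star)\le J(\tilde\mu)-J(\mu^\star)=f(A\tilde\mu)-f(A\mu^\star)$. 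Using $L$-smoothness of $f$ and $\nabla f(A\mu^\star)=-q^\star$ (from \eqref{eq:primal_dual_relationship}), this gap is bounded by a linear term $\langle A^*q^\star,\mu^\star-\tilde\mu\rangle=\sum_s\alpha_s^\star\big(A^*q^\star(x_s^\star)-A^*q^\star(\tilde x_s)\big)$ plus $\tfrac{L}{2}\norm{A\tilde\mu-A\mu^\star}_2^2$. Because $x_s^\star$ maximizes $\sgn(\alpha_s^\star)A^*q^\star$ with vanishing gradient and Hessian bounded through $\kappa_2$, a second-order Taylor expansion gives $\abs{A^*q^\star(x_s^\star)-A^*q^\star(\tilde x_s)}\lesssim\norm{\tilde x_s-x_s^\star}_2^2$, while Lipschitz continuity of the $a_m$ gives $\norm{A\tilde\mu-A\mu^\star}_2\lesssim\max_s\norm{\tilde x_s-x_s^\star}_2$. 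Both contributions are therefore $\lesssim\distH(\calV_k|X^\star)^2$, which yields \eqref{eq:ineq9}.
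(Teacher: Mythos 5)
Your proof is correct, and for the three inequalities the paper actually derives in-text it follows the identical chain: \eqref{eq:ineq5} from \eqref{eq:ineq1}+\eqref{eq:ineq3}+\eqref{eq:ineq4}; the easy direction of \eqref{eq:ineq6} from the asymmetric triangle inequality plus \eqref{eq:ineq5}, and the converse from combining \eqref{eq:ineq2}, \eqref{eq:ineq3}, \eqref{eq:ineq4} into $\distH(X^\star|X_k)^2\lesssim\max\bigl(\distH(X^\star|X_k),\distH(\calV_k|X^\star)\bigr)\cdot\distH(\calV_k|X^\star)$ and noting that either case gives $\distH(X^\star|X_k)\lesssim\distH(\calV_k|X^\star)$; and \eqref{eq:ineq7} from \eqref{eq:ineq1}+\eqref{eq:ineq6}. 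Where you genuinely diverge is that the paper disposes of \eqref{eq:ineq4}, \eqref{eq:ineq3} and \eqref{eq:ineq9} by citing Propositions 3.7 and 3.12 of the companion paper \cite{FGW2019ExchangeI}, whereas you reconstruct them: the two-region argument plus inversion of the negative-definite Hessian for \eqref{eq:ineq4}; the bijection between $X_k$ and $X^\star$ for \eqref{eq:ineq3}, with the genuinely delicate point (that the perturbed critical point attains a value $\geq 1$ and hence belongs to $X_k$) correctly resolved through complementary slackness for $(\calP(\calV_k))$ and $\mu_k\wstarto\mu^\star$; and the rounded competitor $\tilde\mu=\sum_s\alpha_s^\star\delta_{\tilde x_s}$ combined with $-q^\star\in\partial f(A\mu^\star)$, $L$-smoothness, and a second-order expansion at the maximizers for \eqref{eq:ineq9}. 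This buys a proof that does not leave the present document, and your reading of \eqref{eq:ineq9} as a bound on $J(\mu_k)-J(\mu^\star)$ is the intended one, since that is how the estimate is consumed in Theorem \ref{th:linConv}. Two minor repairs: the identity $(A^*q^\star)'(x)-(A^*q^\star)'(x_s^\star)=(A^*q^\star)''(\xi)(x-x_s^\star)$ should use the integral (averaged-Hessian) form of the remainder, as the mean-value theorem does not hold verbatim for vector-valued maps — the averaged Hessian inherits the bound $\preccurlyeq-\gamma\Id$ along the segment, so the conclusion is unchanged; and your implicit-function-theorem step is dispensable, since the vertex with $|A^*q_k(v)|=1$ already forces the maximum of $|A^*q_k|$ over $B(x_s^\star,R)$ to exceed its boundary values, hence to be attained at the unique interior critical point.
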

\begin{proof}
    The first inequalities \eqref{eq:ineq3} and \eqref{eq:ineq4} are simple consequences of Proposition 3.7 in the companion paper \cite{FGW2019ExchangeI}, together with the fact that, by the generic convergence result, $q_k$ converges to $q^\star$. 

    Inequality \eqref{eq:ineq5} is a combination of \eqref{eq:ineq1}, \eqref{eq:ineq3} and \eqref{eq:ineq4}.

    To prove inequality \eqref{eq:ineq6}, let us start by proving that $\distH(\calV_k | X_k) \gtrsim \distH(\calV_k | X^\star)$. We have by the triangular inequality
    \begin{equation*}
        \distH(\calV_k | X^\star) \leq \distH(\calV_k | X_k) + \distH(X_k | X^\star) \stackrel{\eqref{eq:ineq5}}{\lesssim}  \distH(\calV_k | X_k).
    \end{equation*}
    Let us prove the converse inequality $\distH(\calV_k | X_k) \lesssim \distH(\calV_k | X^\star)$.  
    To this end, first combine \eqref{eq:ineq2}, \eqref{eq:ineq3} and \eqref{eq:ineq4} to get
\begin{equation}
 \distH( X^\star|X_k)^2 \lesssim \max\left( \distH(X^\star | X_k), \distH(\calV_k | X^\star) \right)\cdot\distH(\calV_k | X^\star).
 \end{equation}
Regardless which of the expressions $\distH(X^\star | X_k)$ and $\distH(\calV_k | X^\star)$ is larger, this inequality yields $\distH( X^\star|X_k) \lesssim \distH(\calV_k | X^\star)$. Combining this and the triangular inequality, we get
    \begin{equation*}
        \distH(\calV_k | X_k) \leq \distH(\calV_k | X^\star) + \distH(X^\star | X_k) \lesssim  \distH(\calV_k | X^\star).
    \end{equation*}
The inequality \eqref{eq:ineq6} together with \eqref{eq:ineq6} now implies  \eqref{eq:ineq7}. Since the inequality \eqref{eq:ineq9} is a direct consequence of Proposition 3.12 in \cite{FGW2019ExchangeI}, we can conclude the proof.
\end{proof}

Let us introduce the following shorthand notation to design a neighborhood of $X^\star$ of width $r>0$:
\begin{equation*}
    \calB_r \eqdef \bigcup_{x\in X^\star} B(x,r).
\end{equation*}
\begin{proposition}[Approximate nondegeneracy of $|A^*q_k|$\label{prop:Aqknondegenerate}]
    Under Assumptions \ref{ass:regularity} and \ref{ass:nondegenerate}, there exists $k_1\in \N$, which we can assume to be larger than $k_0$, such that for all $k\geq k_1$, $A^*q_k$ satisfies the approximate nondegenerate source condition:
    \begin{enumerate}[label=\roman*)]
        \item The balls $B(x_s^\star,R)$ contain exactly one local maximizer $x_{k,s}$ of $|A^*q_k|$ for each $1\leq s\leq S$.
        \item Within these balls, $|A^*q_k|$ is strongly concave:
\begin{equation}
    |A^*q_k|''(x)\preccurlyeq -\frac{\gamma}{2}\Id, \ \forall x\in \calB_{R}.
\end{equation}
        \item Outside of these balls, we have:
\begin{equation}\label{eq:upperbound_outside}
    |A^*q_k|(x) \leq 1-\frac{\gamma R^2}{4}, \ \forall x\in \Omega \setminus \calB_{R}.
\end{equation}
    \item  Finally
    \begin{equation}\label{eq:bound_sup_Aqk}
        \sup_{x\in \Omega} |A^*q_k|(x)\leq  1 + c_2 \distH(\calV_k | X_k)^2.    
    \end{equation}
\end{enumerate}
\end{proposition}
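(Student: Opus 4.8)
The guiding principle is that the generic convergence result (Theorem~\ref{th:generalConv}), together with the uniqueness in Assumption~\ref{ass:nondegenerate}, gives $q_k \to q^\star$, and that the entire structure of $A^*q^\star$ encoded in Assumption~\ref{ass:nondegenerate} is stable under small perturbations of $q$ in the $C^2$ topology. Concretely, since $(A^*q_k)^{(r)} - (A^*q^\star)^{(r)} = (A^*(q_k-q^\star))^{(r)}$, the definition of $\kappa_r$ in \eqref{def:kappa_r} yields the uniform bounds
\begin{equation*}
\sup_{x\in\Omega}\norm{(A^*q_k)^{(r)}(x) - (A^*q^\star)^{(r)}(x)}_{2\to 2} \le \kappa_r \norm{q_k - q^\star}_2, \qquad r \in \{0,1,2\},
\end{equation*}
which tend to $0$. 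I would fix $k_1 \ge k_0$ large enough that $\norm{q_k-q^\star}_2$ lies below the smallness thresholds used below, in particular $\kappa_2\norm{q_k-q^\star}_2 \le \gamma/2$ and $\kappa_0\norm{q_k-q^\star}_2 \le \gamma R^2/4$. Since Assumption~\ref{ass:nondegenerate} forces $A^*q^\star$ to be sign-definite and bounded away from $0$ on each closed ball $\overline{B(x_s^\star,R)}$ (otherwise $\sgn(A^*q^\star)(A^*q^\star)''$ would be ill-defined), uniform convergence guarantees that for $k \ge k_1$, $A^*q_k$ keeps the same sign $\epsilon_s \eqdef \sgn(A^*q^\star(x_s^\star))$ on each ball. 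Hence $|A^*q_k| = \epsilon_s A^*q_k$ is $C^2$ there and $|A^*q_k|'' = \epsilon_s (A^*q_k)''$.

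Parts (ii) and (iii) then follow by direct perturbation. For (ii), on $\calB_R$ I write $|A^*q_k|''(x) = \epsilon_s(A^*q^\star)''(x) + \epsilon_s((A^*q_k)''-(A^*q^\star)'')(x)$; the first term is $\preccurlyeq -\gamma\Id$ by the strong concavity in Assumption~\ref{ass:nondegenerate}, while the second has spectral norm at most $\kappa_2\norm{q_k-q^\star}_2 \le \gamma/2$, giving $|A^*q_k|''\preccurlyeq -\tfrac{\gamma}{2}\Id$. For (iii), any $x\in\Omega\setminus\calB_R$ satisfies $\dist(x,X^\star)\ge R$, so \eqref{eq:reg2} yields $|A^*q^\star(x)|\le 1-\tfrac{\gamma R^2}{2}$, and the $r=0$ bound gives $|A^*q_k(x)| \le 1 - \tfrac{\gamma R^2}{2} + \kappa_0\norm{q_k-q^\star}_2 \le 1-\tfrac{\gamma R^2}{4}$.

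For (i), the strong concavity of part (ii) shows that $|A^*q_k|$ has at most one interior critical point in each $B(x_s^\star,R)$ and that it is a strict local maximizer. Existence of an interior maximizer follows by comparing boundary and centre values: on $\partial B(x_s^\star,R)$ we just saw $|A^*q_k| \le 1-\tfrac{\gamma R^2}{4}$, whereas at the centre $|A^*q_k(x_s^\star)| \ge 1 - \kappa_0\norm{q_k-q^\star}_2 > 1-\tfrac{\gamma R^2}{4}$ for $k\ge k_1$; thus the maximum over the closed ball is attained in the interior and is the sought $x_{k,s}$.

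The quantitative bound (iv) is the crux. I first reduce the global supremum to the in-ball maximizers: by (iii) every point outside $\calB_R$ has value $<1$, so if $\sup_\Omega |A^*q_k| < 1$ the claim is trivial, and otherwise the global maximizer is a local maximizer of value $\ge 1$, hence lies in some $B(x_s^\star,R)$ and equals $x_{k,s}$ by (i); in particular $x_{k,s}\in X_k$. Writing $g \eqdef \epsilon_s A^*q_k$ (so $g = |A^*q_k|$ near $x_{k,s}$, $g'(x_{k,s})=0$, and $g$ is globally $C^2$), let $v\in\calV_k$ be a vertex nearest to $x_{k,s}$. A second-order Taylor expansion of $g$ at $x_{k,s}$ gives, for some $\xi$ on $[x_{k,s},v]$,
\begin{equation*}
g(x_{k,s}) = g(v) - \tfrac12 (v-x_{k,s})^\top g''(\xi)(v-x_{k,s}) \le g(v) + \tfrac{\kappa_2\norm{q_k}_2}{2}\norm{v-x_{k,s}}_2^2,
\end{equation*}
using $g''(\xi)\succcurlyeq -\kappa_2\norm{q_k}_2\Id$. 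Dual feasibility of $q_k$ on $\calV_k$ gives $g(v)\le |A^*q_k(v)|\le 1$, and since $x_{k,s}\in X_k$ we have $\norm{v-x_{k,s}}_2 = \dist(x_{k,s},\calV_k)\le \distH(\calV_k | X_k)$. As $\norm{q_k}_2$ is bounded (it converges to $\norm{q^\star}_2$), setting $c_2 \eqdef \tfrac12\kappa_2\sup_k\norm{q_k}_2$ yields $\sup_\Omega |A^*q_k| \le 1 + c_2\distH(\calV_k | X_k)^2$. The main obstacle is precisely this last step: one must handle the sign of the certificate correctly when differentiating the absolute value, localize the global maximizer inside a ball so that the vanishing-gradient expansion applies, and verify that $x_{k,s}$ indeed belongs to $X_k$ so that its distance to $\calV_k$ is controlled by $\distH(\calV_k | X_k)$.
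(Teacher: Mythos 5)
Your proof is correct and follows essentially the same route as the paper: points (i)--(iii) by uniform $C^2$ perturbation of the nondegenerate certificate $A^*q^\star$ using $q_k\to q^\star$, and point (iv) by a second-order Taylor expansion at the in-ball maximizer $x_{k,s}$ (where the gradient vanishes) evaluated at the nearest vertex, combined with dual feasibility $|A^*q_k(v)|\leq 1$. Your treatment is in fact slightly more careful than the paper's on two details it leaves implicit, namely the sign-definiteness of $A^*q_k$ on the balls so that $|A^*q_k|$ is genuinely $C^2$ there, and the reduction of the global supremum to the values at the points $x_{k,s}\in X_k$.
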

\begin{proof}
The convergence of $(q_k)_{k\in \N}$ to $q^\star$ and  the fact that the functions $a_m\in C^2(\Omega)$ imply
\begin{equation*}
    A^*q_k\to A^*q^\star, \qquad (A^*q_k)'\to (A^*q^\star)' \quad \mbox{and} \quad (A^*q_k)''\to (A^*q^\star)'' \quad \mbox{uniformly}.
\end{equation*}
The conclusion of the three first points follows from the nondegeneracy of $A^*q^\star$ in Assumption \ref{ass:nondegenerate}.
To obtain the last, let $\omega_s$ denote the cell containing the point $x_{k,s}$ in $X_k$ closest to $x^\star_s$. We have by definition $(A^*q_k)'(x_{k,s})=0$. Let $v_s$ denote a vertex of $\omega_s$ closest to $x_{k,s}$. 
To conclude, we can use a second-order Taylor expansion with the mean-value form of the remainder. 
It reads:
\begin{equation*}
    |A^*q_k|(v_s) = |A^*q_k|(x_{k,s}) + \langle |A^*q_k|'(x_{k,s}), v_s - x_{k,s}\rangle + \frac{1}{2} \left\langle |A^*q_k|''(\xi) (v_s - x_{k,s}), (v_s - x_{k,s}) \right\rangle
\end{equation*}
for some point $\xi$ in the segment $[v,x_{k,s}]$.
By construction $|A^*q_k|(v_s)\leq 1$, $|A^*q_k|'(x_{k,s})=0$. Moreover $\left(|A^*q_k|''(\xi)\right)_{k\in \N}$ is uniformly bounded. This yields for all $s$:
\begin{equation*}
    1 \geq |A^*q_k|(x_{k,s}) - c_2 \|v_s - x_{k,s}\|_2^2 \geq |A^*q_k|(x_{k,s}) - c_2\distH(\calV_k | X_k)^2. 
\end{equation*}
Taking the maximum over the different $1\leq s \leq S$ gives the result \eqref{eq:bound_sup_Aqk}.
\end{proof}

The above proposition translates to the following result for the upper-bound. 
\begin{proposition}[Finite time behavior of the upper-bound\label{prop:behaviorupperbound}]
Under Assumptions \ref{ass:regularity} and \ref{ass:nondegenerate}, there exists $k_2 \geq k_1$ and some positive constants $c_1,c_2,c_3$ such that for all $k\geq k_2$ and for all cell $\omega$:

\begin{align}\label{eq:bound_on_sup}
    \sup_{x\in \omega} |A^*q_k|(x) \leq 
        \begin{cases}
            1 - c_1 \dist(\omega , X_k)^2 + c_2 \distH(\calV_k | X_k)^2  & \textrm{if } \dist(\omega,X^\star)\leq R, \\
            1 - c_3 R^2 & \textrm{if } \dist(\omega,X^\star)\geq R.
        \end{cases}
\end{align}

\end{proposition}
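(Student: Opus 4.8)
The plan is to read off both branches of the estimate from Proposition~\ref{prop:Aqknondegenerate}, reducing the whole statement to a second-order Taylor expansion of $|A^*q_k|$ around its peaks. Fix $k\geq k_2\geq k_1$, where $k_1$ is the index produced by Proposition~\ref{prop:Aqknondegenerate}. The preliminary step I would carry out is to pin down the set $X_k$: for such $k$ it consists of exactly the $S$ local maximizers $x_{k,1},\dots,x_{k,S}$, one in each ball $B(x_s^\star,R)$, and each satisfies $|A^*q_k|(x_{k,s})\geq 1$, so that $x_{k,s}\in X_k$. That every peak value reaches $1$ is the point I would be most careful with: since $\mu^\star$ is unique, Theorem~\ref{th:generalConv} gives $\mu_k\wto\mu^\star$, which forces $\supp(\mu_k)$ to meet each $B(x_s^\star,R)$ for $k$ large; the primal--dual relationship~\eqref{eq:primal_dual_relationship} for the discretized pair then yields a vertex $v\in\calV_k\cap B(x_s^\star,R)$ with $|A^*q_k(v)|=1$, and as $x_{k,s}$ maximizes the strongly concave $|A^*q_k|$ over the ball (Proposition~\ref{prop:Aqknondegenerate}(ii)), we get $|A^*q_k|(x_{k,s})\geq|A^*q_k(v)|=1$. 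Equivalently, this identification is the content of the companion-paper statement underlying~\eqref{eq:ineq3}.

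The second branch is immediate. If $\dist(\omega,X^\star)\geq R$, then every $x\in\omega$ satisfies $\dist(x,X^\star)\geq R$, i.e. $x\in\Omega\setminus\calB_R$, so Proposition~\ref{prop:Aqknondegenerate}(iii) gives $|A^*q_k|(x)\leq 1-\gamma R^2/4$; taking the supremum proves the claim with $c_3=\gamma/4$. For the first branch, $\dist(\omega,X^\star)\leq R$, let $x_\omega\in\omega$ attain $\sup_{x\in\omega}|A^*q_k|(x)$ and split on whether $x_\omega\in\calB_R$. When $x_\omega\in B(x_s^\star,R)$ for the (unique, by the disjointness in~\eqref{eq:reg}) index $s$, I would Taylor-expand $|A^*q_k|$ --- smooth on the ball since $A^*q_k$ stays near $\pm1$ there --- from the critical point $x_{k,s}$ to $x_\omega$ along the segment joining them, which lies in the convex ball hence in $\calB_R$. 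The first-order term vanishes, and combining the Hessian bound $|A^*q_k|''\preccurlyeq-\tfrac{\gamma}{2}\Id$ on $\calB_R$ with the global bound~\eqref{eq:bound_sup_Aqk} gives
\begin{equation*}
 |A^*q_k|(x_\omega)\leq |A^*q_k|(x_{k,s})-\tfrac{\gamma}{4}\|x_\omega-x_{k,s}\|_2^2\leq 1+c_2\distH(\calV_k | X_k)^2-\tfrac{\gamma}{4}\|x_\omega-x_{k,s}\|_2^2.
\end{equation*}

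It then remains to replace $\|x_\omega-x_{k,s}\|_2$ by $\dist(\omega,X_k)$. Since $x_\omega\in\omega$ and $x_{k,s}\in X_k$, the definition of the set distance gives $\dist(\omega,X_k)\leq\|x_\omega-x_{k,s}\|_2$, turning the display into $1+c_2\distH(\calV_k | X_k)^2-\tfrac{\gamma}{4}\dist(\omega,X_k)^2$. In the complementary subcase $x_\omega\notin\calB_R$, Proposition~\ref{prop:Aqknondegenerate}(iii) again yields $\sup_{x\in\omega}|A^*q_k|(x)\leq 1-\gamma R^2/4$; since $X_k\neq\emptyset$ lies in $\Omega$ we have $\dist(\omega,X_k)\leq\diam(\Omega)=\sqrt{D}$, so choosing $c_1=\gamma R^2/(4D)$ makes $1-\gamma R^2/4\leq 1-c_1\dist(\omega,X_k)^2\leq 1-c_1\dist(\omega,X_k)^2+c_2\distH(\calV_k | X_k)^2$. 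As $R\leq\diam(\Omega)$, this same $c_1$ is no larger than $\gamma/4$, so it also serves the first subcase; this uniform choice closes the first branch.

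The main obstacle is the bookkeeping around $X_k$ rather than any individual estimate: one must guarantee that a maximizer landing in $\calB_R$ is genuinely charged by a nearby point of $X_k$, i.e. that the relevant peak $x_{k,s}$ carries value at least $1$. Were this to fail, $\dist(\omega,X_k)$ for a cell hugging $x_s^\star$ could be dictated by a distant ball and the subtracted term $c_1\dist(\omega,X_k)^2$ would be too large for the inequality to survive. Once the identification $X_k=\{x_{k,1},\dots,x_{k,S}\}$ with all peak values $\geq 1$ is secured, the rest is a routine Taylor estimate plus the set-distance definition, with $c_1,c_3$ depending only on $\gamma,R,D$ and $c_2$ inherited unchanged from~\eqref{eq:bound_sup_Aqk}.
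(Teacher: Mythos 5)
Your proof is correct and follows essentially the same route as the paper's: both branches are read off Proposition \ref{prop:Aqknondegenerate}, with the first branch obtained from a second-order expansion of $|A^*q_k|$ about the critical point $x_{k,s}$ combined with the peak bound \eqref{eq:bound_sup_Aqk}, and the mixed subcase ($\dist(\omega,X^\star)\leq R$ but the supremum attained outside $\calB_R$) absorbed into the quadratic term by a crude bound on $\dist(\omega,X_k)$ (you use $\diam(\Omega)=\sqrt{D}$ where the paper uses the triangle inequality to get $2R$; both yield a valid uniform $c_1$). Your extra care in verifying that each in-ball local maximizer genuinely has value at least $1$ and hence belongs to $X_k$ addresses a point the paper's proof asserts without detail, and is a worthwhile addition rather than a deviation.
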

\begin{proof}


Take a cell $\omega$ with $\dist(\omega, X^\star)\geq R$. 
For $k\geq k_1$, the upper-bound \eqref{eq:upperbound_outside} is valid. 
Hence, we obtain the second bound in inequality \eqref{eq:bound_on_sup} for all $k\geq k_1$ and $c_3 = \frac{\gamma}{4}$.

To obtain the first inequality, consider a cell $\omega$ with $\dist(\omega,X^\star)\leq R$. 
Let  $s\in \llbracket 1,S\rrbracket$ denote any index such that $B(x_s^\star,R)\cap \omega \neq \emptyset$. 
Point i) in Proposition \ref{prop:Aqknondegenerate} implies the existence of a unique point $x_{k,s}$ in $X_k\cap B(x_s^\star,R)$. Proposition \ref{prop:Aqknondegenerate}, point iv) implies that $|A^*q_k(x_{k,s})| \leq 1 + c_2 \distH(\calV_k | X_k)^2$. Moreover, point ii) in Proposition \ref{prop:Aqknondegenerate} states that $|A^*q_k|$ is strongly concave in the balls $B(x_s^\star,R)$. Therefore: 
\begin{equation*}
    |A^*q_k|(x)\leq 1 + c_2 \distH(\calV_k | X_k)^2 - c_1 \dist(x,X_k)^2 \textrm{ with } c_1=\frac{\gamma}{4},\   \forall x\in B(x_s^\star,R)\cap \omega.
\end{equation*}
Using the above inequality and point iii) in Proposition \ref{prop:Aqknondegenerate} gives:
\begin{align}
    \sup_{x\in \omega \cap \calB_R} |A^*q_k|(x) &\leq  1 + c_2 \distH(\calV_k | X_k)^2 - c_1 \dist(\omega,X_k)^2, \nonumber \\
    \sup_{x\in \omega \cap \calB_R^c} |A^*q_k|(x)&\leq  1 - \frac{\gamma R^2}{4} \label{eq:strongbound}
\end{align}
We now need to show that \eqref{eq:strongbound} actually implies
\begin{align*}
|A^*q_k|(x)\leq  1 + c_2 \distH(\calV_k | X_k)^2 - c_1' \dist(\omega,X_k)^2.
\end{align*}
for $x\in \omega \cap B_R^c$. To this end, first notice that point i) in Proposition \ref{prop:Aqknondegenerate} shows that $\distH(X_k|X^\star)\leq R$. Therefore, if  $\dist(\omega,X^\star)\leq R$, we get
\begin{equation*}
\dist(\omega,X_k) \stackrel{\eqref{eq:set_triangle_inequality}}{\leq} \dist(\omega,X^\star) + \distH(X_k|X^\star) \leq 2R.     
\end{equation*}
Hence, we get $\dist(\omega,X_k)^2 \leqsim  R^2$, and by \eqref{eq:strongbound} we get 
\begin{align*}
 \sup_{x\in \omega \cap \calB_R^c} |A^*q_k|(x)&\leq  1 - \frac{\gamma R^2}{4} \leq 1- c_1'\dist(\omega,X_k)^2
\end{align*}
for some other constant $c_1'$. In particular, we get
\begin{equation*}
    \sup_{x\in \omega} |A^*q_k|(x)\leq  1 + c_2 \distH(\calV_k | X_k)^2 - c_1' \dist(\omega,X_k)^2.
\end{equation*}


\end{proof}

\begin{proposition}[Structural properties of the partitions $\Omega_k$\label{prop:structural_properties}]
For $k\geq k_2$, let $\Omega_k$ denote a partition generated by Algorithm \ref{alg:genericDisc}. 
There exists a radius $r>0$ such that any cell $\omega\in \Omega_k$ satisfies:
\begin{enumerate}[label=\roman*)]
    \item $\dist(\omega,X^\star) \geq R \Rightarrow |\omega|\geq r$.
    \item $\dist(\omega,X^\star) \leq R \Rightarrow |\omega|\geqsim \dist(\omega,X^\star)$.
    \item For $\ell<2^{-k_2}\cdot \min_{\omega \in \Omega_0} \abs{\omega}$, we have $\min_{\omega\in \Omega_k} |\omega| < \ell \Rightarrow \distH(\calV_k|X^\star)\lesssim \ell$.
\end{enumerate}
\end{proposition}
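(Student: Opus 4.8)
The plan is to reduce all three statements to a single mechanism governing any cell $\omega$ that was created by subdividing a parent $\tilde\omega$ (so $|\tilde\omega| = 2|\omega|$) at some iteration $j$. Two facts drive everything. First, since $\tilde\omega$ was refined it belonged to $\Omega_j^\star$, so Assumption \ref{ass:second_order_condition} gives the certificate lower bound $\|A^*q_j\|_{L^\infty(\tilde\omega)} \geq 1 - \kappa|\tilde\omega|^2 = 1 - 4\kappa|\omega|^2$. Second — and this is the key observation — because the algorithm only refines candidates of largest diameter, every candidate cell at step $j$ has edge-length at most $|\tilde\omega| = 2|\omega|$. By Assumption \ref{ass:generic_refinement} each maximizer $x_{j,s}\in X_j$ lies in some candidate cell, whose edge-length is therefore $\leq 2|\omega|$, so $\dist(x_{j,s},\calV_j)\leq \tfrac{\sqrt D}{2}\cdot 2|\omega|$ and
\[
\distH(\calV_j \mid X_j) \leq \sqrt D\,|\omega|.
\]
This turns the perturbation term $c_2\,\distH(\calV_j\mid X_j)^2$ appearing in Proposition \ref{prop:behaviorupperbound} into a quantity of order $|\omega|^2$, which is what closes every estimate. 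Throughout, I would fix $k_2$ large enough (beyond the thresholds of Propositions \ref{prop:Aqknondegenerate}--\ref{prop:behaviorupperbound}) that $X_j$ is $\distH$-close to $X^\star$ and that \eqref{eq:ineq3}, \eqref{eq:ineq5}, \eqref{eq:ineq6} hold for $j\geq k_2$. Since along any branch of the $2^D$-tree at most one subdivision occurs per iteration, every cell present after $k_2$ iterations has edge-length $\geq 2^{-k_2}\min_{\omega\in\Omega_0}|\omega|=:\rho_0$; consequently any cell strictly smaller than $\rho_0$ must have a parent refined at some $j\geq k_2$, where the two facts above are available.

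For \emph{i)} and \emph{ii)} I would split on whether $|\omega|\geq\rho_0$ or not. Coarse cells are immediate: for i) one has $|\omega|\geq\rho_0$, and for ii) the hypothesis $\dist(\omega,X^\star)\leq R$ gives $|\omega|\geq\rho_0\geq(\rho_0/R)\dist(\omega,X^\star)$. For cells with $|\omega|<\rho_0$ the parent $\tilde\omega$ was refined at $j\geq k_2$. Using $\dist(\tilde\omega,X^\star)\leq\dist(\omega,X^\star)$, the triangle inequality \eqref{eq:set_triangle_inequality} in the form $\dist(\tilde\omega,X_j)\geq\dist(\tilde\omega,X^\star)-\distH(X^\star\mid X_j)$, and $\distH(X^\star\mid X_j)=\distH(X_j\mid X^\star)\lesssim\distH(\calV_j\mid X_j)\leq\sqrt D|\omega|$ (by \eqref{eq:ineq3} and \eqref{eq:ineq5}), I would relate $\dist(\tilde\omega,X_j)$ to $\dist(\omega,X^\star)$ up to $O(|\omega|)$ errors. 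Feeding the lower bound $1-4\kappa|\omega|^2$ into the relevant branch of Proposition \ref{prop:behaviorupperbound} then yields: for i), where both branches give $\sup_{\tilde\omega}|A^*q_j|\lesssim 1-cR^2$ as soon as $\dist(\tilde\omega,X_j)\gtrsim R$, the inequality $4\kappa|\omega|^2\gtrsim R^2$, i.e. $|\omega|\gtrsim R$; and for ii), where $\dist(\tilde\omega,X^\star)\leq R$ forces the first branch, the inequality $c_1\dist(\tilde\omega,X_j)^2\leq(4\kappa+c_2D)|\omega|^2$, hence $\dist(\tilde\omega,X_j)\lesssim|\omega|$ and, after restoring the $O(|\omega|)$ discrepancies, $\dist(\omega,X^\star)\lesssim|\omega|$. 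Taking $r=\min(\rho_0,cR)$ gives the uniform radius the statement requires.

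For \emph{iii)} the hypothesis $\min_{\omega\in\Omega_k}|\omega|<\ell<\rho_0$ produces a cell $\omega\in\Omega_k$ of edge-length $<\ell$ whose parent was refined at some $j$ with $k_2\leq j<k$. The key bound then gives $\distH(\calV_j\mid X_j)\leq\sqrt D|\omega|<\sqrt D\,\ell$, and by \eqref{eq:ineq6} $\distH(\calV_j\mid X^\star)\asymp\distH(\calV_j\mid X_j)\lesssim\ell$. Since the discretization sets are nested ($\calV_j\subseteq\calV_k$), the map $k\mapsto\distH(\calV_k\mid X^\star)$ is non-increasing, so $\distH(\calV_k\mid X^\star)\leq\distH(\calV_j\mid X^\star)\lesssim\ell$, which is the claim.

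The main obstacle is the potential circularity between cell sizes and the accuracy term $\distH(\calV_j\mid X_j)$ that contaminates the upper bound of Proposition \ref{prop:behaviorupperbound}: a naive estimate controls this term only by the global convergence rate, which is itself what these structural properties are meant to quantify. The decisive step that breaks the loop is the purely combinatorial consequence of the largest-diameter refinement rule, $\distH(\calV_j\mid X_j)\leq\sqrt D|\omega|$, which ties the accuracy at iteration $j$ to the size of the cell just created. The remaining work is bookkeeping: ensuring $k_2$ is large enough for Propositions \ref{prop:Aqknondegenerate}--\ref{prop:behaviorupperbound} and the inequalities \eqref{eq:ineq3}--\eqref{eq:ineq6} to be simultaneously in force, and separately treating the finitely many coarse cells ($|\omega|\geq\rho_0$), for which the refinement mechanism is unavailable but the conclusions are trivial.
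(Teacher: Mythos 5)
Your proposal is correct and follows essentially the same route as the paper's proof: the same combinatorial observation (the largest-diameter rule plus Assumption \ref{ass:generic_refinement} forces $\distH(\calV_j\mid X_j)\lesssim|\omega|$ whenever a parent of edge-length $2|\omega|$ is refined at step $j$), the same confrontation of the Assumption \ref{ass:second_order_condition} lower bound with Proposition \ref{prop:behaviorupperbound}, the same triangle-inequality bookkeeping via \eqref{eq:ineq3}--\eqref{eq:ineq6}, and the same monotonicity argument for iii). If anything, your treatment of point i) via the parent cell and both branches of Proposition \ref{prop:behaviorupperbound} is slightly more careful than the paper's, which applies the refinement criterion to $\omega$ itself.
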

\begin{proof}[Proof of Proposition \ref{prop:structural_properties}]
Let $k_2$ be the number of iterations referenced in Proposition \ref{prop:behaviorupperbound}. It is clear that after $k_2$ iterations, all the cells have a diameter larger than $2^{-k_2}\cdot \min_{\omega \in \Omega_0}\abs{\omega}$.

Let us establish point i) first. Let $\omega$ denote a cell in $\Omega_k$ with $\dist(\omega,X^\star)\geq R$. To be refined by the algorithm, this cell needs to verify the second order approximation Assumption~\ref{ass:second_order_condition}:
\[ \Vert A^*q_k \Vert_{L^\infty(\omega)} \geq 1- \kappa |\omega|^2\]
On the other hand, Proposition \ref{prop:behaviorupperbound} states that 
\[ \Vert A^*q_k \Vert_{L^\infty(\omega)} \leq 1 -c_3R^2 \]
A necessary condition for refinement by Algorithm \ref{alg:genericDisc} is then  $|\omega|\ge cR$ for some $c$. Taking $r=\min\left(\frac{c}{2}R,2^{-k_2}\cdot \min_{\omega \in \Omega_0}\abs{\omega}\right)$ proves i).

Point ii) is more technical. It follows from the following arguments. 
\begin{enumerate}
    \item Suppose that the partition $\Omega_k$ contains a cell $\omega$ with an edge-length $\abs{\omega}\eqdef \ell$ with $\ell < 2^{-k_2}\cdot\min_{\omega \in \Omega_0} \abs{\omega}$. 
    \item The parent cell $\omega_p$ of $\omega$ must have been refined in some iteration $k_\ell$ before $k$ but after $k_2$. For this $k_{\ell}$, the size of the parent cell is $|\omega_p|=2\ell$. 
    \item Since we refine the largest cells in $\Omega_{k_\ell}^\star$, it means that every cell in $\Omega_{k_{\ell}}^\star$ has an edge-length smaller or equal than $2\ell$. By construction, every cell that contains an element of $X_k$ is in $\Omega_{k_{\ell}}^\star$, hence 
    \begin{equation*}
    \distH(\calV_k  | X_k) \leqsim \ell.
    \end{equation*}
    \item By Proposition \ref{prop:inequalities}, equation~\eqref{eq:ineq6}, we have $\distH(\calV_k|X_k)\asymp \distH(\calV_k|X^\star)$. We get $\distH(\calV_k|X^\star) \lesssim \ell$ for $k=k_\ell$  . By monotonicity of the sequence $(\distH(\calV_k|X^\star))_{k\in \N}$, we also get the inequality for all $k\geq k_{\ell}$.
    \item Noticing that $\dist(\omega_p,X^*)\leq \dist(\omega,X^*)\le R$, we apply Proposition \ref{prop:behaviorupperbound} to $\omega_p$ to get
    \[
         \Vert A^*q_k \Vert_{L^\infty(\omega_p)} \leq 1 - c_1 \dist(\omega_p,X_k)^2 + c_2\ell^2.
    \]
     Since $\omega_p$ is refined at iteration $k_\ell$, it belongs to $\Omega_{k_\ell}^\star$ and needs to verify the second order approximation Assumption~\ref{ass:second_order_condition}:
\[ \Vert A^*q_k \Vert_{L^\infty(\omega_p)} \geq 1- \kappa |\omega_p|^2\geq 1- 4\kappa \ell^2.\]
    From the two previous inequalities, we get $ \dist(\omega_p,X_k) \lesssim \ell$.
    \item  To conclude, remark that 
    \begin{align*}
        \ell &\gtrsim \dist(\omega_p,X_k) \\ 
             &\stackrel{\eqref{eq:set_triangle_inequality}}{\geq}  \dist(\omega,X_k) - \distH(\omega|\omega_p) \geq \dist(\omega,X_k) - \sqrt{D}\ell \\ 
             & \stackrel{\eqref{eq:set_triangle_inequality}}{\geq} \dist(\omega,X^\star) - \distH(X_k|X^\star) - \sqrt{D}\ell \geq \dist(\omega,X^\star) - c_5 \ell,
    \end{align*}
    for some $c_5>0$. This proves ii). 
\end{enumerate}
For point iii), repeat the first four arguments of point ii) to the cell that achieves $\min_{\omega\in \Omega_k} |\omega|$.

\end{proof}

Now that we established the geometrical structure of $\Omega_k$, the remaining task is to count the number of cells in $\Omega_k$.
\begin{proposition}[Counting cells\label{prop:counting_cells}]
Let $\Omega_k$ denote a cell partition generated by Algorithm \ref{alg:genericDisc}. 
Assume that 
\begin{equation*}
\min_{\omega\in \Omega_k} |\omega| =  2^{-J}
\end{equation*}
for some $J\in \N$ with $J\geq k_2$ in Proposition \ref{prop:structural_properties}. Then the number of cells in $\Omega_k$ satisfies:
\begin{equation*}
        |\Omega_k|\leq c_0 + c_1 S J
\end{equation*}
for some constants $c_0,c_1>0$ independent of $J$ and $S$.
\end{proposition}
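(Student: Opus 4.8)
The plan is to split the cells of $\Omega_k$ into those far from the support $X^\star$ and those close to it, and to count each group separately using the two geometric bounds established in Proposition \ref{prop:structural_properties}.

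First I would dispatch the cells far from the support, namely those with $\dist(\omega, X^\star) \geq R$. By point i) of Proposition \ref{prop:structural_properties}, each such cell has edge-length $|\omega| \geq r$, hence volume $|\omega|^D \geq r^D$. Since the cells of a partition have pairwise disjoint interiors and $\vol(\Omega) = 1$, there can be at most $r^{-D}$ of them. This contributes a constant $c_0$ independent of $J$ and $S$.

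The bulk of the work is to count the close cells, those with $\dist(\omega, X^\star) < R$, and here I would organise the count by dyadic scale. Every cell satisfies $|\omega| = 2^{-j}$ for some $j \in \{0, 1, \dots, J\}$, since the largest admissible edge-length is $1 = 2^0$ and, by hypothesis, the smallest is $2^{-J}$; there are thus $J+1$ scales. For a close cell at scale $j$, point ii) of Proposition \ref{prop:structural_properties} gives $\dist(\omega, X^\star) \lesssim |\omega| = 2^{-j}$, so $\dist(\omega, x_s^\star) \leq C 2^{-j}$ for some source $x_s^\star$ and a scale-independent constant $C$. As $\diam(\omega) = \sqrt D\, 2^{-j}$, the cell is then entirely contained in the ball $B(x_s^\star, (C+\sqrt D) 2^{-j})$. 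The key estimate is a volume-packing argument: the close cells of a fixed scale $j$ attached to a fixed source $x_s^\star$ have disjoint interiors, each of volume $2^{-jD}$, and all lie in a ball of volume $c_D (C+\sqrt D)^D 2^{-jD}$, where $c_D$ is the volume of the unit ball in $\R^D$. Dividing, their number is at most the scale-independent constant $N_0 \eqdef c_D (C+\sqrt D)^D$. Summing over the $S$ sources and the $J+1$ scales bounds the number of close cells by $N_0 S (J+1)$. Adding the far cells yields $|\Omega_k| \leq r^{-D} + N_0 S (J+1) \leq c_0 + c_1 S J$ after adjusting constants, using $J \geq k_2 \geq 1$.

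I expect the only delicate point to be this scale-wise packing estimate: one must check that the confinement radius supplied by point ii) scales like the edge-length $2^{-j}$ itself, and not like a fixed radius, so that the number of cells per source per scale is genuinely bounded by a universal constant instead of growing with $j$. It is precisely this proportionality $|\omega| \gtrsim \dist(\omega, X^\star)$ that converts the multiscale refinement near each $x_s^\star$ into a geometric-type layering with $O(S)$ cells per scale; everything else is bookkeeping over the finitely many scales $j = 0, \dots, J$.
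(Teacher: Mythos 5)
Your proposal is correct, and it follows the paper's proof in its overall architecture: the same dichotomy between far cells (handled by point i) of Proposition \ref{prop:structural_properties} and a volume bound giving the constant $c_0 = r^{-D}$) and near cells (handled by the proportionality $|\omega| \gtrsim \dist(\omega, X^\star)$ from point ii)). Where you genuinely diverge is in how the near cells are counted. The paper writes $|\Omega_{k,s}| = \int_{\cup\omega} |\omega(x)|^{-D}\,dx$, bounds the integrand by $\max(\ell, c\|x - x_s^\star\|_2)^{-D}$ with $\ell = 2^{-J}$, and evaluates the radial integral $\int_{\ell/c}^{\sqrt D} \rho^{-1}\,d\rho \lesssim |\log_2 \ell| \lesssim J$. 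You instead stratify by dyadic scale and run a packing argument: at each scale $j$ the cells attached to $x_s^\star$ sit inside a ball of radius $\asymp 2^{-j}$, have disjoint interiors of volume $2^{-jD}$, and hence number at most a universal constant $N_0$, giving $N_0 S(J+1)$ after summing over the $J+1$ scales and $S$ sources. The two computations are morally identical --- the logarithm in the integral is exactly ``one constant per dyadic shell'' --- but your version is more elementary (no integral manipulation) and makes the origin of the factor $J$ more transparent. You also correctly identify the one delicate point, namely that the confinement radius from point ii) scales with $2^{-j}$ rather than being fixed; that is indeed what keeps the per-scale count bounded. The only cosmetic gap is the absorption of the additive $N_0 S$ term into $c_1 S J$, which requires $J \geq 1$; this holds under the hypothesis $J \geq k_2$ and matches the level of care the paper itself takes with constants.
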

\begin{proof}
We decompose $\Omega_k$ as $\bigcup_{s=0}^S \Omega_{k,s}$ with 
\begin{align*}
    \Omega_{k,0} &\eqdef \{\omega \in \Omega_k, \dist(\omega,X^\star)> R\}, \\
    \Omega_{k,s} & \eqdef \{\omega \in \Omega_k, \dist(\omega,x_s^\star) = \dist(\omega,X^\star), \dist(\omega,X^\star)\leq R\} \quad s\in \llbracket 1,S \rrbracket.
\end{align*}
In words, $\Omega_{k,s}$ is the set of cells in $\Omega_k$ closest to $x_s^\star$, and at a distance smaller than $R$ from $X^\star$. We have
\begin{equation}\label{eq:total_recall}
    |\Omega_k| \leq \sum_{s=0}^S |\Omega_{k,s}|.
\end{equation}

We first use point i) in Proposition \ref{prop:structural_properties} to control $|\Omega_{k,0}|$. It states that all cells in $\Omega_{k,0}$ have an edge-length larger than $r$. 
The volume of a cell of edge-length $r$ is $r^D$. Since all the cells are disjoint and contained in $\Omega$, we get 
\begin{equation*}
|\Omega_{k,0}| \leq \vol(\Omega)/r^D = r^{-D}.
\end{equation*}

Now let us derive a bound for $|\Omega_{k,s}|$. 
Let $\omega(x)$ denote the cell in $\Omega_k$ containing $x$ and $\ell\eqdef 2^{-J}$.
By assumption $|\omega(x)|\geq \ell$ for all $x\in \Omega$. 
Moreover for all $x\in \Omega$ such that $\omega(x)\in \Omega_{k,s}$ we have by Proposition \ref{prop:structural_properties}, point ii) $|\omega(x)|\geq c \dist(\omega(x), x_s^\star)$. Therefore 
\begin{equation*}
    |\omega(x)| \gtrsim \dist(\omega(x),x_s^\star) \stackrel{\eqref{eq:set_triangle_inequality}}{\geq} \dist(x,x_s^\star) - \distH(x|\omega(x)) \geq \|x-x_s^\star\|_2 - \sqrt{D}|\omega(x)|. 
\end{equation*}
This gives $|\omega(x)|\gtrsim \|x-x_s^\star\|_2$ for any $x \in \bigcup_{\omega\in \Omega_{k,s}} \omega$. Combining the two inequalities yields
\begin{equation*}
    |\omega(x)| \geq \max(\ell,  c\|x-x_s^\star\|_2), \quad \forall x\in \bigcup_{\omega\in \Omega_{k,s}} \omega.
\end{equation*}
for some $c\geq 0$. For each cell $\omega$, we have $|\omega|^{D}=\int_{\omega} \,dx$.
We continue as follows
\begin{eqnarray*}
|\Omega_{k,s}| &=&\sum_{\omega\in \Omega_{k,s} } 1= \sum_{\omega\in \Omega_{k,s} } \int_\omega |\omega(x)|^{-d}\,dx=
\int_{\bigcup_{\omega\in \Omega_{k,s}} \omega} |\omega(x)|^{-D}\,dx \\
& \leq & \int_{\bigcup_{\omega\in \Omega_{k,s}} \omega} \max(\ell,  c\|x-x_s^\star\|_2)^{-D} \,dx \\
& \leq  &\int_{\Omega} \max(\ell,  c\|x-x_s^\star\|_2)^{-D} \,dx \\
& \leq  &\int_{B(x_s^\star,\sqrt{D})} \max(\ell,  c \|x-x_s^\star\|_2)^{-D} \,dx  \\
& = &\int_{B(x_s^\star,\ell/c)} \ell^{-d} \,dx + \int_{B(x_s^\star,\sqrt{D})\setminus B(x_s^\star,\ell/c)} (c \|x-x_s^\star\|_2)^{-D} \,dx \\
&\lesssim & 1 + \int_{\rho=\ell/c}^{\sqrt{D}}\rho^{-D} \rho^{D-1} d\rho 
\lesssim 1 + |\log_2(\ell)| \lesssim 1+J.
\end{eqnarray*}
Summing up everything, we obtain $|\Omega_k| \lesssim c_0 + c_1 SJ$ for some constants $c_0,c_1\geq 0$.
\end{proof}

We now gathered all the necessary ingredient to prove the complexity result.

\begin{proof}[Proof of Theorem \ref{th:linConv}]
Take $J\geq k_2$. 
The Algorithm terminates whenever a cell of size $2^{-(J+1)}$ has to be refined.
When it stops, all the cells therefore have a size larger than $2^{-(J+1)}$ by construction and $\min_{\omega \in \Omega_k} |\omega| = 2^{-(J+1)}$.

Proposition \ref{prop:counting_cells} therefore indicates that $|\Omega_k|\leq c_0 + c_1 S J$. 
Since at least one cell is refined per iteration, we reached the termination criterion for a number of iterations $k\leq c_0 + c_1 S J$. Point iii) in Proposition \ref{prop:structural_properties} allows us to conclude that $\distH(\calV_k|X^\star)\lesssim 2^{-J}$. The list of inequalities in Proposition \ref{prop:inequalities} yield the conclusion.
\end{proof}

\subsection{Further proofs}
Here, we collect proofs of the remaining, smaller and more technical propositions. 
\subsubsection{Proof of Proposition \ref{prop:triangle_set}}
\begin{proof}
    For any $x_1\in X_1$, $x_2\in X_2$, $x_3\in X_3$, we have $\|x_1-x_2\|_2\leq\|x_1-x_3\|_2+\|x_3-x_2\|_2$. 
    Taking the infimum over $x_1\in X_1$ and the infimum over $x_2\in X_2$ yields
    \begin{align*}
        \dist(X_1,X_2) &\leq \inf_{x_1\in X_1}\|x_1-x_3\|_2 + \inf_{x_2\in X_2} \|x_3-x_2\|_2
                   \leq \sup_{x_3\in X_3}\inf_{x_1\in X_1}\|x_1-x_3\|_2 + \inf_{x_2\in X_2} \|x_3-x_2\|_2 \\
                   & = \distH(X_1|X_3) + \inf_{x_2\in X_2} \|x_3-x_2\|_2.
    \end{align*}
    Taking the infimum over $x_3\in X_3$, we obtain the claimed result.
\end{proof}

\subsubsection{Proof of Proposition \ref{prop:duality}}

\begin{proof}
Under Assumptions  \ref{ass:A} and \ref{ass:weakReg}, the function $J$ is lower semi-continuous for the weak-* topology. 
The existence of a measure $\mu$ supported on $\calV$ with $J(\mu)<+\infty$ and the coercivity of $J$ therefore ensures the existence of a primal solution. We then invoke Theorem \cite[9.8.1]{attouch2014variational} to conclude on the existence of a dual solution, the extremality relationships and on the fact that there is no duality gap. 

For the boundedness of the primal solution set in total variation norm, it suffices to use the fact that $J$ is coercive, ensuring boundedness of its sub-level sets.

Now let us prove the boundedness of the dual solution set. To this end, notice that by convexity, $f$ is continuous at any point in $\interior(\dom(f))$. In particular, $f$ is continuous at $A\mu$. Using Proposition 1.3.9 in \cite{urrutyII}, we conclude that 
\begin{equation}
   g^*(q) \eqdef  f^*(q) - \langle A\mu, q\rangle 
\end{equation}
is coercive. 
We have $f^*(q) \geq g^*(q) - \|\mu\|_{\calM(\calV)} \|A^*q\|_{L^\infty(\calV)}$.
Hence $f^*$ is coercive on the admissible set for which $f^*(q) \geq g^*(q) - \|\mu\|_{\calM(\calV)}$. This ensures the boundedness of the dual solution set.
\end{proof}

\subsubsection{Proof of Proposition \ref{prop:first_order_selection}}

\begin{proof}
Any choice of $\kappa_1(q_k,\omega)$ in \eqref{eq:first order} satisfying the Lipschitz inequality 
\begin{equation*}
    \sup_{x_1,x_2\in \omega} \frac{\Bigl | \left|A^*q_k(x_1)| - |A^*q_k(x_2)\right|\Bigr|}{\|x_1-x_2\|_2} \leq \kappa_1(q_k,\omega)
\end{equation*}    
also satisfies $\Uk(\omega) \geq \|A^*q_k\|_{L^\infty(\omega)}$, i.e. Assumption \ref{ass:generic_refinement}. 
We have 
\begin{equation*}
\sup_{x_1,x_2\in \omega} \frac{\Bigl| |A^*q_k(x_1)| - |A^*q_k(x_2)|\Bigr|}{\|x_1-x_2\|_2} = \sup_{x\in \omega} |A^*q_k|'(x),    
\end{equation*}
where we consider that $|A^*q_k|'(x)=0$ on the points of non differentiability of $|A^*q_k|$.  
To obtain the expression \eqref{eq:def_kappa1_loc}, we use a Hölder inequality:
\begin{equation}\label{eq:bound_local_k1}
    \sup_{x\in \omega} |A^*q_k|'(x) =  \sup_{x\in \omega} \left|\sum_{m=1}^M q_k[m] a_m'(x)\right| \leq \sum_{m=1}^M | q_k[m] | \sup_{x\in \omega} |a_m'|(x). 
\end{equation}

Showing that Assumption \ref{ass:second_order_condition} is not always valid stems from the fact that we use a $0$-th order Taylor expansion, with a remainder that is therefore of first order only.

\end{proof}

\subsubsection{Proofs of Proposition \ref{prop:second_order_selection} and Proposition \ref{prop:second_order_selection_with_grad}}

Our proofs rely on the following well-known (see e.g. \cite[Lemma 1.2.3, 1.2.4]{nesterov2003introductory}) statements about Taylor-expansions. If $f: C \to \R$ is a function on a convex domain $C\sse \R^n$ with a $\kappa_2$-Lipschitz continuous gradient, we have for $x,y$ arbitrary
\begin{align}
	\abs{f(x)-f(y)- \sprod{f'(y),x-y}} &\leq \kappa_2 \frac{\norm{x-y}^2_2}{2}\label{eq:taylor1} \\
	\|f'(x)-f'(y)\|_2 &\leq \kappa_2 \norm{x-y}_2\label{eq:taylor2grad}.
\end{align}

The value $\kappa_2(q_k,\omega)$ is an upper-bound on the Lipschitz constant of $(A^*q_k)'$ restricted to $\omega$. Indeed, 
\begin{equation}
    \sup_{x\in \omega} \|(A^*q_k)''(x)\|_{2\to 2} =  \sup_{x\in \omega} \left\|\sum_{m=1}^M q_k[m] a_m''(x)\right\|_{2\to 2} \leq \sum_{m=1}^M | q_k[m] | \sup_{x\in \omega} \|a_m''\|_{2\to 2}(x). 
\end{equation}

\paragraph{Proof of Proposition \ref{prop:second_order_selection}}
\begin{proof}
Now, let us prove that
\begin{equation}\label{eq:bound_order_2}
    \Uk(\omega) - \kappa_2(q_k,\omega) \diam(\omega)^2 \leq  \|A^*q_k\|_{L^\infty(\omega)} \leq \Uk(\omega).
\end{equation}
By equation \eqref{eq:taylor1}, we get  
    \begin{equation*}
        |A^*q(x)- A^*q(v) - \langle (A^*q)'(v), x-v\rangle| \le  \kappa_{2}(q_k,\omega) \frac{\|x-v\|_2^2}{2}.
    \end{equation*}
It follows
    \begin{equation*}
        |A^*q(x)| \leq |A^*q(v) + \langle (A^*q)'(v), x-v\rangle| +\kappa_{2}(q_k,\omega)  \frac{\|x-v\|_2^2}{2},
    \end{equation*}
    Taking first the supremum in $x$ and then the infimum over the vertices $v$ yields the right-hand side of \eqref{eq:bound_order_2}.
    We also have
    \begin{align*}
        |A^*q(x)| &\geq |A^*q(v) + \langle (A^*q)'(v), x-v\rangle| -  \kappa_{2}(q_k,\omega) \frac{\diam(\omega)^2}{2} \\
        &\geq |A^*q(v) + \langle  (A^*q)'(v), x-v\rangle| +\kappa_{2}(q_k,\omega) \frac{\|x-v\|_2^2}{2} -  \kappa_{2}(q_k,\omega)\diam(\omega)^2.
    \end{align*}
    Again, taking the supremum in $x$ and then the infimum over the vertices $v$ we obtain the left-hand side of \eqref{eq:bound_order_2}.
    The right hand-side of \eqref{eq:bound_order_2} proves that the second order selection process satisfies Assumption \ref{ass:generic_refinement}, whereas the left hand-side proves that it obeys Assumption \ref{ass:second_order_condition}.
\end{proof}


\paragraph{Proof of Proposition \ref{prop:second_order_selection_with_grad}}
\begin{proof}
We have for all $v \in \vertx{\omega}$ and $x\in \omega$
\begin{align*}
	 \norm{(A^*q_k)'(x) - (A^*q_k)'(v)}_2  \leq \kappa_2(q_k,\omega) \norm{x-v}_2 ,
\end{align*}
which implies
\begin{align*}
	\norm{(A^*q_k)'(x)} \geq  \norm{(A^*q_k)'(v)}_2  - \kappa_2(q_k,\omega) \norm{x-v}_2 \
	\Longrightarrow \ \inf_{x\in \omega} \norm{(A^*q_k)'(x)} \geq \norm{(A^*q_k)'(v)}_2  - \kappa_2(q_k,\omega) \diam(\omega).
\end{align*}
Since this is true for every $v \in \vertx(\omega)$, we get 
\begin{equation*}
    \Gk(\omega) \leq \inf_{x \in \omega} \norm{\nabla A^*q_k(x)}_2,
\end{equation*}
In other words, $\Gk$ is a lower bound of $\Vert (A^*q_k)'\Vert $.
Since $\kappa_2$ is an upper bound of the Lipschitz constant of $\Vert (A^*)'q_k\Vert $, $\Gk(\omega)$ is a lower bound of $\Vert (A^*q_k)'\Vert $. It follows that any cell $\omega$ that contains a point of $X_k$ will verify both $\Uk(\omega)\ge 1$ and $\Gk(\omega) \le 0$. Hence $\Omega_k^\star$ verifies Assumption \ref{ass:generic_refinement}. It is clear that the $\Omega_k^\star$ of Definition~\ref{def:second_order_selection_with_grad} is included in the $\Omega_k^\star$ of Definition~\ref{def:second_order_selection}. Because the latter verifies Assumption~\ref{ass:second_order_condition}, so does the former.
\end{proof}

%


\subsubsection{Proof of Proposition \ref{prop:k2_gaussian_2D}}

\begin{proof} We have
  \begin{align*}
      a_m'(x) &= -a_m(x) \left[\frac{x-z_m}{\sigma^2}\right] \\
    a_m''(x) &= a_m(x) \left[ -\frac{\sigma^2}{\sigma^4}\Id + \frac{(x-z_m)(x-z_m)^T}{\sigma^4}\right].  
 \end{align*}

 For any $u\in \R^D$ and any $x\in \Omega$, we have
 \begin{align*}
     \bigl|\langle a_m''(x) u, u \rangle\bigr|  & =\frac{a_m(x)}{\sigma^4}  \bigl| -\sigma^2 \|u\|_2^2 + \langle u,x-z_m\rangle^2 \bigr| \\ 
     & \leq \frac{a_m(x)}{\sigma^4}\max(\sigma^2,\|x-z_m\|_2^2)\|u\|_2^2.
 \end{align*}

To conclude, it suffices to notice that for $x\in \omega$
\begin{align*}
    a_m(x)& \leq a_m(\dist(z_m, \omega))  \\
    \|x-z_m\|_2 & \leq \left[ \dist(z_m,\omega) + \diam(\omega) \right].
\end{align*}

%

\end{proof}

\subsection*{Acknowledgement} AF acknowledges support from the Wallenberg AI, Autonomous Systems and Software Program (WASP) funded by the Knut and Alice Wallenberg Foundation.
FdG and PW were supported by the ANR Micro-Blind (grant ANR-21-CE48-0008), by the ANR LabEx CIMI (grant ANR-11-LABX-0040) and the support of AI Interdisciplinary Institute ANITI funding, through the French ``Investing for the Future PIA3'' program under the Grant Agreement ANR-19-PI3A-0004. 

\bibliographystyle{plain}
\bibliography{iterative_bio}

\end{document}